\documentclass[10pt]{amsart}

\setlength{\textwidth}{14.cm}
\setlength{\textheight}{21.5cm}

\usepackage{geometry,graphicx,amssymb,amsmath,amsbsy,eucal,amsfonts,mathrsfs,amscd,bm,tcolorbox}
\usepackage{soul,xcolor,cancel}

\usepackage{tikz}
\usetikzlibrary{patterns}
\usepackage{subcaption}
\geometry{
    letterpaper,
    left=   1.2in,
    right=  1.2in,
    top=    1.25in,
    bottom= 1.25in
}
\linespread{1.}

\numberwithin{equation}{section}

\allowdisplaybreaks[3]

\newtheorem{theorem}{Theorem}[section]
\newtheorem{lemma}[theorem]{Lemma}
\newtheorem{assumption}[theorem]{Assumption}
\newtheorem{corollary}[theorem]{Corollary}

\theoremstyle{definition}

\theoremstyle{remark}
\newtheorem{remark}[theorem]{Remark}
\newtheorem{example}[theorem]{Example}

\newcommand{\eps}{\varepsilon}

\newcommand{\p}{{\partial}}

\newcommand{\bl}{\bigl\langle}
\newcommand{\br}{\bigr\rangle}

\newcommand{\bld}[1]{\boldsymbol{#1}}

\newcommand{\bn}{\bld{n}}

\newcommand{\Dt}{\Delta t}
\newcommand{\uu}{\mathsf{u}}
\newcommand{\ww}{\mathsf{w}}

\newcommand{\lla}{\mathsf{l}}

\newcommand{\Lla}{\Lambda}


\newcommand{\pdt}{\partial_{\Delta t}}
\newcommand{\pt}{\partial_{t}}
\newcommand{\dt}{\Delta t}

\newcommand{\J}{\mathsf{J}}

\def\cG{\mathcal{G}}
\def\cH{\mathcal{H}}

\def\tu{u}
\def\tw{w}
\def\tla{\lambda}

\title[Parabolic-Parabolic Interface Problems]{Estimates of Discrete Time Derivatives for the Parabolic-Parabolic Robin-Robin Coupling Method}

\author{Erik Burman$^1$}
\address{$^1$Department of Mathematics, University College London, London, UK–WC1E 6BT, United Kingdom}
\email{e.burman@ucl.ac.uk}
\author{Rebecca Durst$^2$}
\address{$^2$Argonne National Laboratory, 9700S. CassAvenue, Lemont, IL 60439, USA}
\email{rebecca\_durst@alumni.brown.edu}
\author{Miguel A. Fern\'andez$^3$}
\address{$^3$Sorbonne Universit\'e \& CNRS, UMR 7598 LJLL, 75005 Paris, France -- Inria, 75012 Paris, France}
\email{miguel.fernandez@inria.fr}
\author{Johnny Guzm\'an$^4$}
\address{$^4$Division of Applied Mathematics, Brown University, 182 George Street, Providence, RI 02912, USA}
\email{johnny\_guzman@brown.edu}

\author{Sijing Liu$^{5,*}$}
\address{$^5$Department of Mathematical Sciences, Worcester Polytechnic Institute, 100 Institute Road, Worcester, MA 01609, USA}
\address{$^*$ Corresponding author: Sijing Liu}
\email{sliu13@wpi.edu}

\usepackage[foot]{amsaddr}
\keywords{Parabolic-parabolic interface problem, loosely coupled scheme, Robin conditions, stability estimates}

\subjclass{65M12}

\begin{document}

\setstcolor{red}
\begin{abstract}
  We consider a loosely coupled, non-iterative Robin-Robin coupling method proposed and analyzed in {\em [J. Numer. Math., 31(1):59--77, 2023]} for a parabolic-parabolic interface problem and prove estimates for the discrete time derivatives of the scalar field in different norms. When the interface is flat and perpendicular to two of the edges of the domain we prove error estimates in the $H^2$-norm. Such estimates are key ingredients to analyze a defect correction method for the parabolic-parabolic interface problem. Numerical results are shown to support our findings.
\end{abstract}

\maketitle

\section{Introduction}
Time splitting methods are popular for fluid-structure interaction (FSI) problems (see, e.g., 
\cite{Badia:103018,BURMAN2009766,BHS14,burman2014explicit,bukac-et-al-14,bukac-21,gigante-vergara-21b,gigante-vergara-21,serino-et-al-19,MINEV2022100082,BUCELLI2023112326})).   One of the first stable splitting methods was proposed by Burman and Fern{\' a}ndez \cite{BURMAN2009766}. Later the same authors \cite{burman2014explicit} developed a related method which was coined the genuine Robin-Robin splitting method. Both these methods however suffered from a coupling between the space and time discretization parameters that reduced the accuracy. This constraint was lifted by eliminating the mesh dependence of the Robin parameter in \cite{burman2022stability}. The resulting method has been analyzed in \cite{burman2022stability, burman2022fully, burman2023loosely}.  A very similar method was developed and analyzed by Buka\v{c} and Seboldt \cite{bukacSeboldt}. In \cite{burman2022fully} it was proved for the FSI problems that the method converges as $\mathcal O(\sqrt{\dt})$ 
where $\dt$ is the time step. Numerical evidence suggested that those estimates were not sharp, and nearly first-order accuracy was proved (mod possibly a logarithmic factor) in \cite{burman2023loosely} for the analogue method applied to the parabolic-parabolic and hyperbolic-parabolic problems. The analysis was extended to the FSI problem in \cite{durst-22,burman2023robin}. For parabolic-parabolic couplings, there is a rich literature on splitting schemes motivated by models of ocean-atmosphere interaction. In these models, friction forces on the interface render the physical coupling dissipative through a Robin-type coupling condition, as discussed in \cite{LT13}. This aspect has been successfully exploited in the design of splitting methods \cite{connors2009partitioned,CHL12,CH12,ZHS18,ZLCJ20,ZHS20,SBPK23,LHH24}. In our case, the coupling conditions consist of continuity of both the primal variables and the fluxes across the interface. This coupling is conservative, and hence the approach suggested in the above references fails. Instead, the splitting method uses a Robin condition for the computational coupling, which turns out to lead to an unconditionally stable algorithm. An approach for conservative fluid-fluid coupling problems was proposed in \cite{FGS14}, using  Nitsche or Robin type couplings similar to those introduced in \cite{burman2014explicit}.  In a domain decomposition framework, a splitting method based on subcycling, i.e., iterative solution, was proposed and analyzed in \cite{Bene15,Bene17}. In a similar spirit, but focusing on a multi-timestep approach, a Robin-Robin coupling for time-dependent advection--diffusion was introduced in \cite{CL19}, with numerical investigation of the stability.

It is well known that discrete time differences for time stepping methods (e.g., backward Euler method) superconverge. In particular, when the backward Euler method is applied to a parabolic problem, the first-time difference of the errors converge with order $\mathcal O((\dt)^2)$. In this paper, we prove similar results for the splitting method  \cite{burman2023loosely} applied to an interface problem. In particular, we will prove second-order convergence for the scalar fields living on the two sub-domains in the $L^2$-norm. Moreover, in special configurations (i.e., the interface is horizontal and perpendicular to two sides of the domain), we will be able to prove second-order convergence in the $H^2$-norm. Numerically, the $H^2$ second-order convergence rates seem to hold on more general configurations. This appears to be the first work where error estimates in stronger norms for splitting methods applied to interface problems have been considered.

Error estimates of derivatives of the solution are, of course, of interest in their own right in many applications. However, our main motivation for this work is the application of these estimates in the analysis of a prediction-correction method \cite{ppcorrection} that we are concurrently developing. The aim is to improve the first-order convergence of \cite{burman2023loosely} to second-order convergence in time through a defect-correction procedure \cite{bohmer1984defect}.  The method in \cite{ppcorrection} uses a prediction step, which is exactly the splitting method we analyze here. The second-order accuracy of the correction step depends on the second-order accuracy of time differences of the prediction step, which is precisely the subject of this paper. 

Our overall goal is to propose and analyze a second-order loosely coupled scheme for FSI problems. This paper represents the first step towards that goal. Based on our past experiences (see \cite{burman2023loosely,burman2022fully,burman-durst-guzman-19}), the analysis of FSI is very similar to those of hyperbolic-parabolic and parabolic-parabolic interface problems. Therefore, we begin by considering the simplest problem, namely, the parabolic-parabolic interface problem, to ensure that the proofs are more transparent while maintain some of the difficulties of FSI. In this paper, all the estimates, including the $H^2$ estimate, are crucial components in analyzing a second-order convergent correction method, which is included in \cite{ppcorrection}. We believe it is beneficial to fully understand the parabolic-parabolic interface problem first before progressing to the more complex hyperbolic-parabolic interface problem and FSI problems. Note that while the problem we are considering can be formulated using a unified approach with discontinuous coefficients, our focus is on developing a loosely coupled scheme, as we mentioned, which is why we prefer the partitioned setting.

As for a single parabolic problem, the idea to prove higher convergence for time differences is to use that the time differences satisfy a similar discrete equation with new right-hand sides that have time differences themselves. Then, one uses the error analysis for the original method to proceed. In our case, we will use the analysis provided in \cite{burman2023loosely} to do this. The main difference here is that we consider a problem with Neumann boundary conditions  on two of the sides of a square instead of pure Dirichlet boundary conditions, which were considered in \cite{burman2023loosely}. This, in fact, simplifies the analysis slightly, and additionally, one can remove the logarithmic factor that appears in \cite{burman2023loosely}. It should be mentioned that we only consider the time discrete case. The fully discrete case is  more involved.

The rest of the paper is organized as follows. In Section \ref{sec:problems}, we introduce a parabolic-parabolic interface problem and the corresponding Robin-Robin coupling method. In Section \ref{sec:stability}, we present stability results for a Robin-Robin method. Section \ref{sec:errorest} is devoted to the error estimates. Finally, we provide some numerical results in Section \ref{sec:numerics} and end with some concluding remarks in Section \ref{sec:conclude}.

\section{The Parabolic-Parabolic interface problem and Robin-Robin coupling method}\label{sec:problems}
Let $\Omega=(0,1)^2$ and suppose that $\Omega=\Omega_f \cup \Omega_s \cup \Sigma$. The interface $\Sigma$ is assumed to be a line segment that intersects $\Omega$ on the two side edges; see Figure \ref{figure:neumann}.  We let $\Gamma_{N\!e}$ denote the two side edges of $\Omega$ and we let $\Gamma_D$ be the bottom and top edges of $\Omega$. We let $\Gamma_{N\!e}^i= \Gamma_{N\!e}\cap \partial \Omega_i$ for $i=s,f$. 

\begin{figure}[ht]
\centering
\begin{tikzpicture}
\draw[thick] plot coordinates {(0,0) (4,0) (4,4) (0,4) (0,0)};
\draw[thick] plot coordinates {(0,1) (4,3) };

 \node at (2,1) {$\Omega_f$};
 \node at (2,3) {$\Omega_s$};
 \node at (4.3,3) {$\Sigma$};
 \node at (2,-0.4) {$\Gamma_D^f$};
 \node at (2,4.4) {$\Gamma_D^s$};
 \node at (-0.4,0.5) {$ \Gamma_{N\!e}^f$};
 \node at (4.4,1.6) {$ \Gamma_{N\!e}^f$};
 \node at (-0.4,2.5) {$\Gamma_{N\!e}^s$};
 \node at (4.4,3.6) {$\Gamma_{N\!e}^s$};

\end{tikzpicture}
\caption{The domains $\Omega_f$ and $\Omega_s$ with interface $\Sigma$ and Neumann boundaries.} \label{figure:neumann}
\end{figure}
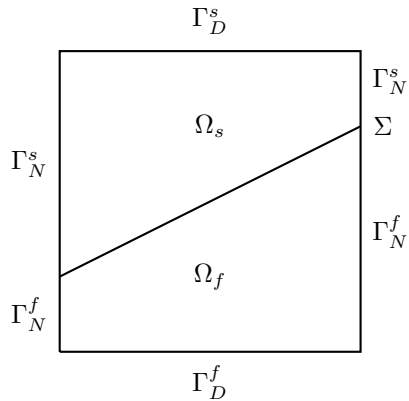

\subsection{The Parabolic-parabolic problem}
We consider the interface problem
\begin{subequations}\label{eq:ppinterface}
\begin{alignat}{2}
\pt \uu-\nu_f \Delta \uu=&0, \quad && \text{ in } [0, T] \times \Omega_f, \nonumber\\
\uu(0, x)=& \uu_0(x), \quad  && \text{ on } \Omega_f, \\
\uu=&0,  \quad && \text{ on }  [0, T] \times \Gamma^f_D,\nonumber\\
\p_{\bn} \uu =&0,  \quad && \text{ on }  [0, T] \times  \Gamma_{N\!e}^f,\nonumber\\
\nonumber\\
\pt \ww-\nu_s \Delta \ww=&0, \quad && \text{ in } [0, T] \times \Omega_s, \nonumber\\
\ww(0, x)=& \ww_0(x), \quad  && \text{ on } \Omega_s, \\
\ww=&0,  \quad && \text{ on }  [0, T] \times \Gamma^s_D,\nonumber\\
\p_{\bn} \ww =&0,  \quad && \text{ on }  [0, T] \times \Gamma_{N\!e}^s,\nonumber\\
\nonumber\\
\ww - \uu =& 0, \quad && \text{ in } [0,T] \times \Sigma, \\
\nu_s \nabla \ww \cdot \bn_s + \nu_f \nabla \uu \cdot \bn_f =& 0, \quad && \text{ in } [0,T] \times \Sigma,
\end{alignat}  
\end{subequations}
where $\bn_f$ and $\bn_s$ are the outward facing normal vectors for $\Omega_f$ and $\Omega_s$, respectively. We assume that the initial data is smooth and that $\uu$ and $\ww$ are smooth on $\Omega_f$ and $\Omega_s$, respectively.

\subsection{Variational form}
Let $(\cdot, \cdot)_i$ be the $L^2$-inner product on $\Omega_i$ for $i=f, s$. Moreover, let $\bl \cdot, \cdot \br$ be the $L^2$-inner product on $\Sigma$.  Let $N>0$ be an integer, and define $\Dt := \frac{T}{N}$, and let $\uu^{n} := \uu(t_n, \cdot)$, where $t_n := n\Dt$ for $n \in \{0, 1, 2, \dots, N\}$. We consider the spaces
\begin{subequations}\label{eq:disspaces}
  \begin{alignat}{1}
 V_f:=&\{ v \in H^1(\Omega_f): v=0 \text{ on }  \Gamma^f_D \},  \\
 V_s:= &\{ v \in H^1(\Omega_s): v=0 \text{ on  } \Gamma^s_D \},  \\
V_g:= & L^2(\Sigma).
\end{alignat}
\end{subequations}

By setting $\lla^{n+1} := \nu_f \p_{\bn_f}\uu^{n+1}$ and  assuming that $\lla^{n+1} \in L^2(\Sigma)$ for all $n$, the solution to \eqref{eq:ppinterface}  
also satisfies the following variational formulation, for $n=0,\ldots,N-1$:
\begin{subequations}\label{var}
\begin{alignat}{2}
(\partial_t \ww^{n+1},z )_s+ \nu_s (\nabla \ww^{n+1}, \nabla z)_s+ \bl \lla^{n+1}, z\br=&0 , \quad && z \in V_s,\label{var1} \\
(\partial_t \uu^{n+1}, v)_f+\nu_f (\nabla \uu^{n+1}, \nabla v)_f- \bl \lla^{n+1}, v\br=&0, \quad &&v \in V_f, \label{var2}\\
\bl \ww^{n+1}-\uu^{n+1}, \mu \br=&0, \quad && \mu \in V_g. \label{var4}
\end{alignat}
\end{subequations}

\subsection{Robin-Robin coupling: time discrete method} \label{discrete}

We define the discrete time derivatives:
\begin{alignat*}{1}
\pdt v^{n+1}=\frac{v^{n+1}-v^n}{\dt},
\end{alignat*}
and 
\begin{alignat*}{1}
\pdt^2 v^{n+1}=\frac{v^{n+1}-2 v^n+ v^{n-1}}{(\dt)^2}.
\end{alignat*}

The Robin-Robin method solves sequentially:
\begin{subequations}\label{eq1}
\begin{alignat}{2}
\pdt w^{n+1}- \nu_s \Delta w^{n+1}=&0, \quad && \text{ in }  \Omega_s, \\
w^{n+1}=&0 , \quad &&  \text{ on } \Gamma^s_D, \\
\p_{\bn} w^{n+1}=&0, \quad &&  \text{ on }   \Gamma_{N\!e}^s,\\
\alpha  w^{n+1}+ \nu_s \p_{\bn_s} w^{n+1} = &  \alpha  u^{n} - \nu_f \p_{\bn_f} u^n   \quad  && \text{ on }     \Sigma.
\end{alignat}
\end{subequations}

\begin{subequations}\label{eq2}
\begin{alignat}{2}
\pdt u^{n+1}- \nu_f\Delta u^{n+1}=&0, \quad && \text{ in } \Omega_f,   \\
u^{n+1}=&0 , \quad &&  \text{ on }  \Gamma^f_D, \\
\p_{\bn} u^{n+1}=&0, \quad &&  \text{ on }   \Gamma_{N\!e}^f,\\
\alpha u^{n+1}+ \nu_f \p_{\bn_f} u^{n+1} = & \alpha w^{n+1} + \nu_f \p_{\bn_f} u^n   \quad && \text{ on }     \Sigma.
\end{alignat}
\end{subequations}
We let $\lambda^{n+1}=\nu_f \p_{\bn_f}\tu^{n+1}$. Then, the time semi-discrete solution solves the following: Find $w^{n+1}  \in V_s$, $u^{n+1} \in V_f$, and $\lambda^{n+1} \in V_g $ such that, for $n\geq 0$,
\begin{subequations}\label{s}
\begin{alignat}{2}
(\pdt w^{n+1}, z)_s+\nu_s(\nabla w^{n+1}, \nabla z)_s + \alpha \bl w^{n+1} - u^n, z \br + \bl \lambda^n, z\br=&0 , \quad && z \in V_s, \label{s1}\\
(\pdt u^{n+1}, v)_f+\nu_f(\nabla u^{n+1}, \nabla v)_f- \bl \lambda^{n+1}, v\br=&0, \quad &&v \in V_f,  \label{s2}\\
\bl \alpha(u^{n+1}-w^{n+1})+(\lambda^{n+1}-\lambda^n), \mu \br=&0, \quad && \mu \in V_g,  \label{s3}
\end{alignat}
\end{subequations}
with $u^0=\uu_0(x)$ and $w^0=\ww_0(x)$.

One can rewrite \eqref{s3} as
\begin{equation}\label{con}
 \alpha(u^{n+1}-w^{n+1})=\lambda^n-\lambda^{n+1}, \quad \text{ on } \Sigma.
\end{equation}

\section{Stability Result}\label{sec:stability}
In this section we will prove stability results for the Robin-Robin method with a more general right-hand side: Find $w^{n+1}  \in V_s$, $u^{n+1} \in V_f$, such that, for $n\geq 0$,
\begin{subequations}\label{seq1}
\begin{alignat}{2}
\pdt \tw^{n+1}- \nu_s \Delta \tw^{n+1}=&b_1^{n+1}, \quad && \text{ in }  \Omega_s, \\
\tw^{n+1}=&0 , \quad &&  \text{ on } \Gamma^s_D, \\
\p_{\bn} \tw^{n+1}=&0, \quad &&  \text{ on }   \Gamma_{N\!e}^s,\label{eq:nbw}\\
\alpha  \tw^{n+1}+ \nu_s \p_{\bn_s} \tw^{n+1} = &  \alpha  \tu^{n} - \nu_f \p_{\bn_f} \tu^n+ \eps_1^{n+1}   \quad  && \text{ on }     \Sigma.\label{eq:rrw}
\end{alignat}
\end{subequations}

\begin{subequations}\label{seq2}
\begin{alignat}{2}
\pdt \tu^{n+1}- \nu_f\Delta \tu^{n+1}=&b_2^{n+1}, \quad && \text{ in } \Omega_f, \label{seq2.1} \\
\tu^{n+1}=&0 , \quad &&  \text{ on }  \Gamma^f_D, \\
\p_{\bn} \tu^{n+1}=&0, \quad &&  \text{ on }    \Gamma_{N\!e}^f,\label{eq:nbu}\\
\alpha \tu^{n+1}+ \nu_f \p_{\bn_f} \tu^{n+1} = & \alpha \tw^{n+1} + \nu_f \p_{\bn_f} \tu^n + \eps_2^{n+1}  \quad && \text{ on }     \Sigma,\label{eq:rru}
\end{alignat}
\end{subequations}
with initial conditions
\begin{equation}\label{eq:uwinitial}
   w^0=0\quad \text{and}\quad u^0=0,
\end{equation}
 where $b^{n+1}_1, b^{n+1}_2, \varepsilon^{n+1}_1$ and $\varepsilon^{n+1}_2$ are general right-hand side terms that are sufficiently smooth in space and time.
When analyzing the error of the Robin-Robin method the terms $\{b_i^n\}$, $\{\eps_i^n\}$ will be the residual terms.

\begin{remark}[Regularity]
We assume that $u^{n+1}\in H^2(\Omega_f)$ and $w^{n+1}\in H^2(\Omega_s)$ for the remainder of this paper.  The primary reason we assume this $H^2$ regularity and subsequently prove an $H^2$ estimate in Corollary \ref{corH2} is that, in the analysis of the correction methods, we apply the trace inequality to the Lagrange multiplier on the interface $\Sigma$. Then the $H^2$ regularity assumption is useful in analyzing the resulting quantities. It would be interesting to remove this assumption, but we currently do not have the means to do so.
\end{remark}

We let $\tla^{n+1}=\nu_f \p_{\bn_f}\tu^{n+1}$ and we see that the above solution satisfies, for $n\ge 0$,
\begin{subequations}\label{stweak}
\begin{alignat}{2}
(\pdt \tw^{n+1}, z)_s+\nu_s(\nabla \tw^{n+1}, \nabla z)_s + \alpha \bl \tw^{n+1} - \tu^n, z \br + \bl \tla^n, z\br=& L_1(z) , \quad && z \in V_s, \label{stweak1}\\
(\pdt \tu^{n+1}, v)_f+\nu_f(\nabla \tu^{n+1}, \nabla v)_f- \bl \tla^{n+1}, v\br=&L_2(v), \quad &&v \in V_f,  \label{stweak2}\\
\bl \alpha(\tu^{n+1}-\tw^{n+1})+(\tla^{n+1}-\tla^n), \mu \br=&L_3(\mu), \quad && \mu \in V_g,  \label{stweak3}
\end{alignat}
\end{subequations}
where 
\begin{alignat*}{1}
L_1(z) :=&  ( b_1^{n+1}, z)_s + \bl \eps_1^{n+1}, z \br  ,\\
L_2(v):=&(b_2^{n+1},v)_f,\\
L_3(\mu):=&  \bl \eps_2^{n+1}, \mu \br.
\end{alignat*}
For convenience we rewrite \eqref{stweak3} as:
\begin{equation}\label{errorStrong}
\alpha(\tu^{n+1} - \tw^{n+1} ) = \tla^n - \tla^{n+1}+ \eps_2^{n+1}, \quad \text{ on } \Sigma.
\end{equation}

We will need to define the following quantities for the stability estimates.
\begin{alignat*}{1}
Z^{n+1}(\psi,\phi,\theta):= & \frac{1}{2} \|\phi^{n+1}\|_{L^2(\Omega_f)}^2+\frac{1}{2}\|\psi^{n+1}\|_{L^2(\Omega_s)}^2 +\frac{\dt \alpha}{2} \|\phi^{n+1}\|_{L^2(\Sigma)}^2+ \frac{\dt}{2\alpha} \|\theta^{n+1}\|_{L^2(\Sigma)}^2, \\ 
S^{n+1}(\psi, \phi,\theta):= & \dt(\nu_f  \|\nabla \phi^{n+1}\|_{L^2(\Omega_f)}^2+  \nu_s \|\nabla \psi^{n+1}\|_{L^2(\Omega_s)}^2) +  \frac{1}{2} (\|\psi^{n+1}-\psi^n\|_{L^2(\Omega_s)}^2 + \|\phi^{n+1}-\phi^n\|_{L^2(\Omega_f)}^2)\\
& + \frac{\alpha \dt}{2}\|\phi^{n+1}-\phi^n+  \frac{1}{\alpha} ( \theta^{n+1}-\theta^n)\|_{L^2(\Sigma)}^2.
\end{alignat*}

We first state a preliminary result.
\begin{lemma}\label{errorLemma1}
Let $\tw, \tu$ solve \eqref{seq1} and \eqref{seq2} then the following identity holds. 
\begin{alignat}{1}\label{eq:errorlemma1}
Z^{n+1}(\tw, \tu, \tla)+S^{n+1}(\tw, \tu, \tla)=Z^{n}(\tw, \tu, \tla) + \dt F^{n+1}(\tw, \tu)  +\frac{\dt}{\alpha} \bl \eps_2^{n+1}, \tla^{n+1} \br,
\end{alignat}
where
\begin{alignat*}{1}
F^{n+1}(\tw, \tu):= & (b_1^{n+1}, \tw^{n+1})_s+(b_2^{n+1}, \tu^{n+1})_f + \bl \eps_1^{n+1}+\eps_2^{n+1}, \tw^{n+1} \br + \bl \tu^{n+1}-\tu^n, \eps_2^{n+1} \br. 
\end{alignat*}
\end{lemma}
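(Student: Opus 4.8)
The plan is to prove \eqref{eq:errorlemma1} as an \emph{exact} energy identity obtained by testing the weak formulation \eqref{stweak} against the solution itself. First I would take $z=\tw^{n+1}$ in \eqref{stweak1} and $v=\tu^{n+1}$ in \eqref{stweak2}, multiply each identity by $\dt$, and add the two. For the two discrete-time-derivative terms $\dt(\pdt\tw^{n+1},\tw^{n+1})_s$ and $\dt(\pdt\tu^{n+1},\tu^{n+1})_f$ I would apply the polarization identity $(a-b,a)=\tfrac12\|a\|^2-\tfrac12\|b\|^2+\tfrac12\|a-b\|^2$ in $(\cdot,\cdot)_s$ and $(\cdot,\cdot)_f$ respectively. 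This immediately produces the volume contributions $\tfrac12(\|\tw^{n+1}\|_{L^2(\Omega_s)}^2-\|\tw^n\|_{L^2(\Omega_s)}^2)$ and $\tfrac12(\|\tu^{n+1}\|_{L^2(\Omega_f)}^2-\|\tu^n\|_{L^2(\Omega_f)}^2)$ of $Z^{n+1}-Z^n$, together with the increments $\tfrac12\|\tw^{n+1}-\tw^n\|_{L^2(\Omega_s)}^2+\tfrac12\|\tu^{n+1}-\tu^n\|_{L^2(\Omega_f)}^2$ and the gradient terms $\dt(\nu_s\|\nabla\tw^{n+1}\|_{L^2(\Omega_s)}^2+\nu_f\|\nabla\tu^{n+1}\|_{L^2(\Omega_f)}^2)$ in $S^{n+1}$. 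On the right-hand side the volume parts of $\dt L_1(\tw^{n+1})+\dt L_2(\tu^{n+1})$, namely $\dt(b_1^{n+1},\tw^{n+1})_s+\dt(b_2^{n+1},\tu^{n+1})_f$, are already the volume part of $\dt F^{n+1}$.

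The heart of the argument is the interface contribution
\begin{equation*}
I:=\dt\alpha\bl\tw^{n+1}-\tu^n,\tw^{n+1}\br+\dt\bl\tla^n,\tw^{n+1}\br-\dt\bl\tla^{n+1},\tu^{n+1}\br,
\end{equation*}
which must be shown to equal the remaining interface parts of $Z^{n+1}-Z^n+S^{n+1}$ up to the residual terms involving $\eps_2^{n+1}$. The key step is to eliminate $\tw^{n+1}$ on $\Sigma$ through the coupling relation \eqref{errorStrong}, i.e.\ $\tw^{n+1}=\tu^{n+1}+\tfrac1\alpha(\tla^{n+1}-\tla^n)-\tfrac1\alpha\eps_2^{n+1}$, substitute it into $I$, and complete squares. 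Carrying this out should reproduce the two interface energies $\tfrac{\dt\alpha}{2}\|\tu^{n+1}\|_{L^2(\Sigma)}^2$ and $\tfrac{\dt}{2\alpha}\|\tla^{n+1}\|_{L^2(\Sigma)}^2$ and their level-$n$ counterparts $\tfrac{\dt\alpha}{2}\|\tu^n\|_{L^2(\Sigma)}^2$, $\tfrac{\dt}{2\alpha}\|\tla^n\|_{L^2(\Sigma)}^2$, together with the dissipation term $\tfrac{\alpha\dt}{2}\|\tu^{n+1}-\tu^n+\tfrac1\alpha(\tla^{n+1}-\tla^n)\|_{L^2(\Sigma)}^2$ of $S^{n+1}$, while leaving exactly the interface residuals $-\dt\bl\eps_2^{n+1},\tw^{n+1}\br-\dt\bl\tu^{n+1}-\tu^n,\eps_2^{n+1}\br-\tfrac{\dt}{\alpha}\bl\eps_2^{n+1},\tla^{n+1}\br$.

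Finally I would reassemble the pieces: combining the residual $\dt\bl\eps_1^{n+1},\tw^{n+1}\br$ coming from $\dt L_1(\tw^{n+1})$ with the $\eps_2^{n+1}$ residuals extracted from $I$ produces precisely the interface part $\dt\bl\eps_1^{n+1}+\eps_2^{n+1},\tw^{n+1}\br+\dt\bl\tu^{n+1}-\tu^n,\eps_2^{n+1}\br$ of $\dt F^{n+1}$, plus the leftover $\tfrac{\dt}{\alpha}\bl\eps_2^{n+1},\tla^{n+1}\br$, which is the last term in \eqref{eq:errorlemma1}. Since every manipulation is an identity rather than an estimate, no inequalities are needed. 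The main obstacle is the interface algebra in the second paragraph: organizing the completion of squares and correctly tracking every $\eps_2^{n+1}$ contribution (the coupling relation must also be used once to match the extracted residual against the $\bl\eps_2^{n+1},\tw^{n+1}\br$ appearing in $F^{n+1}$) is delicate, but it is a finite, exact bookkeeping computation that closes the identity.
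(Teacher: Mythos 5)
Your proposal is correct and follows essentially the same route as the paper's proof: test \eqref{stweak1} and \eqref{stweak2} with $\dt\,\tw^{n+1}$ and $\dt\,\tu^{n+1}$, use the polarization identity for the time-difference terms, and eliminate $\tw^{n+1}$ on $\Sigma$ via the coupling relation \eqref{errorStrong} before completing squares in the interface terms. The residuals you isolate, including the leftover $\tfrac{\dt}{\alpha}\bl\eps_2^{n+1},\tla^{n+1}\br$, match the paper's bookkeeping exactly.
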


\begin{proof}
To begin, we set $z= \dt \,\tw^{n+1}$ in \eqref{stweak1} and $v = \dt \,\tu^{n+1}$ in \eqref{stweak2}  
to get
\begin{alignat}{1}
& \frac{1}{2}\|\tw^{n+1}\|_{L^2(\Omega_s)}^2+ \frac{1}{2} \|\tu^{n+1}\|_{L^2(\Omega_f)}^2 +  \frac{1}{2}\|\tw^{n+1}-\tw^n\|_{L^2(\Omega_s)}^2+ \frac{1}{2} \|\tu^{n+1}-\tu^n\|_{L^2(\Omega_f)}^2 \nonumber  \\
 & + \nu_s\dt \|\nabla \tw^{n+1}\|_{L^2(\Omega_s)}^2+  \nu_f \dt \|\nabla \tu^{n+1}\|_{L^2(\Omega_f)}^2 \label{erroraux213} \\
  & =  \frac{1}{2}\|\tw^{n}\|_{L^2(\Omega_s)}^2+ \frac{1}{2} \|\tu^{n}\|_{L^2(\Omega_f)}^2+\dt \J^{n+1},\nonumber 
\end{alignat}
where
\begin{equation} \label{eq:Jn1}
\begin{aligned}
\J^{n+1}:=& -  \alpha \bl \tw^{n+1} - \tu^n, \tw^{n+1} \br- \bl  \tla^n, \tw^{n+1}\br+ \bl \tla^{n+1}, \tu^{n+1}\br +L_1(\tw^{n+1})+L_2(\tu^{n+1}).
 \end{aligned}
\end{equation}

Manipulating the first three terms in \eqref{eq:Jn1} and using \eqref{errorStrong}, we   obtain
\begin{multline}\label{eq:pre}
-  \alpha \bl \tw^{n+1} - \tu^n, \tw^{n+1} \br- \bl  \tla^n, \tw^{n+1}\br+ \bl \tla^{n+1}, \tu^{n+1}\br \\
= \mathbb{J}^{n+1} + \frac{1}{\alpha} \bl \eps_2^{n+1}, \tla^{n+1} \br + \bl \eps_2^{n+1}, \tw^{n+1} \br- \bl \tu^n-\tu^{n+1}, \eps_2^{n+1} \br,
\end{multline}
with 
\begin{equation*}
 \mathbb{J}^{n+1} := \alpha \bl \tu^n-\tu^{n+1}, \tu^{n+1} \br+ \frac{1}{\alpha} \bl  \tla^n-\tla^{n+1}, \tla^{n+1} \br- \bl \tu^n-\tu^{n+1}, \tla^n-\tla^{n+1} \br.
\end{equation*}
One can easily show that 
\begin{alignat*}{1}
 \mathbb{J}^{n+1}= & \frac{\alpha}{2}(\|\tu^n\|_{L^2(\Sigma)}^2 - \|\tu^{n+1}\|_{L^2(\Sigma)}^2)  + \frac{1}{2\alpha}(\|\tla^n\|_{L^2(\Sigma)}^2 - \|\tla^{n+1}\|_{L^2(\Sigma)}^2) \\
&- \frac{\alpha}{2}\|(\tu^n-\tu^{n+1})+ \frac{1}{\alpha}(\tla^n-\tla^{n+1}) \|_{L^2(\Sigma)}^2.
\end{alignat*}
By combining this identity with  \eqref{eq:Jn1} and \eqref{eq:pre}, we arrive at
\begin{alignat*}{1}
\J^{n+1}=&  \frac{\alpha}{2}(\|\tu^n\|_{L^2(\Sigma)}^2 - \|\tu^{n+1}\|_{L^2(\Sigma)}^2)  + \frac{1}{2\alpha}(\|\tla^n\|_{L^2(\Sigma)}^2 - \|\tla^{n+1}\|_{L^2(\Sigma)}^2) \\
&- \frac{\alpha}{2}\|(\tu^n-\tu^{n+1})+ \frac{1}{\alpha}(\tla^n-\tla^{n+1}) \|_{L^2(\Sigma)}^2\\
& +L_1(\tw^{n+1}) +L_2(\tu^{n+1}) + \frac{1}{\alpha} \bl \eps_2^{n+1}, \tla^{n+1} \br + \bl \eps_2^{n+1}, \tw^{n+1} \br+\bl  \eps_2^{n+1}, \tu^{n+1}-\tu^{n} \br.
\end{alignat*}

If we plug in these results to \eqref{erroraux213} we arrive at the identity. 
\end{proof}

We can now state an identity for the last term in \eqref{eq:errorlemma1}.
\begin{lemma}\label{lemmasummation} 
Let $\tw, \tu$ solve \eqref{seq1} and \eqref{seq2} and assuming that $\eps_2^{n+1} \in V_f$ then the following identity holds
\begin{alignat*}{1}
 \frac{\dt}{\alpha} \sum_{n=0}^{N-1} \bl \eps_2^{n+1}, \tla^{n+1} \br= & -  \frac{\dt}{\alpha} \sum_{n=1}^{N-1}  (\tu^n, \pdt \eps_2^{n+1})_f+\frac{\dt}{\alpha}\sum_{n=0}^{N-1} \Big(\nu_f (\nabla \tu^{n+1},  \nabla \eps_2^{n+1})_f- (b_2^{n+1}, \eps_2^{n+1})_f \Big) \\
 &+ \frac{1}{\alpha} (\tu^N, \eps_2^N)_f.
 \end{alignat*}
\end{lemma}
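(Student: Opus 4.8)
The plan is to turn the interface pairing $\bl \eps_2^{n+1}, \tla^{n+1}\br$ into volume integrals over $\Omega_f$ and then perform a discrete summation by parts in time. The hypothesis $\eps_2^{n+1}\in V_f$ is exactly what permits the Lagrange multiplier $\tla^{n+1}=\nu_f\p_{\bn_f}\tu^{n+1}$ to be tested against it through equation \eqref{stweak2}. First I would take $v=\eps_2^{n+1}$ in \eqref{stweak2} and solve for the interface term, which yields, for each $n$,
\[
\bl \tla^{n+1}, \eps_2^{n+1}\br = (\pdt \tu^{n+1}, \eps_2^{n+1})_f + \nu_f(\nabla \tu^{n+1}, \nabla \eps_2^{n+1})_f - (b_2^{n+1}, \eps_2^{n+1})_f,
\]
using $L_2(\eps_2^{n+1})=(b_2^{n+1}, \eps_2^{n+1})_f$. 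Multiplying by $\dt/\alpha$, summing over $n=0,\dots,N-1$, and writing $\dt\,\pdt\tu^{n+1}=\tu^{n+1}-\tu^n$, the gradient and $b_2$ contributions already reproduce the second sum of the claimed identity, so the only term that needs further processing is $\frac{1}{\alpha}\sum_{n=0}^{N-1}(\tu^{n+1}-\tu^n, \eps_2^{n+1})_f$.

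For this term I would apply Abel summation. Reindexing the forward part via $\sum_{n=0}^{N-1}(\tu^{n+1}, \eps_2^{n+1})_f=\sum_{n=1}^{N}(\tu^n, \eps_2^n)_f$ and recombining with $-\sum_{n=0}^{N-1}(\tu^n, \eps_2^{n+1})_f$, the backward differences of the data appear as $\eps_2^{n+1}-\eps_2^n=\dt\,\pdt\eps_2^{n+1}$. The upper boundary term survives as $(\tu^N, \eps_2^N)_f$, while the lower boundary term $(\tu^0, \eps_2^1)_f$ vanishes because of the initial condition $\tu^0=0$ in \eqref{eq:uwinitial}. This leaves
\[
\frac{1}{\alpha}\sum_{n=0}^{N-1}(\tu^{n+1}-\tu^n, \eps_2^{n+1})_f = \frac{1}{\alpha}(\tu^N, \eps_2^N)_f - \frac{\dt}{\alpha}\sum_{n=1}^{N-1}(\tu^n, \pdt\eps_2^{n+1})_f,
\]
and substituting this back assembles the three terms of the stated identity.

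Since everything here is an algebraic manipulation of the weak equation together with a summation by parts, there is no genuine analytic obstacle; the only delicate point is index bookkeeping. In particular I would be careful that the reindexing sends the top contribution to $n=N$ (so that the factor $1/\alpha$, rather than $\dt/\alpha$, multiplies $(\tu^N, \eps_2^N)_f$) and that the remaining sum collapses to the range $n=1,\dots,N-1$, with the $n=0$ term killed by $\tu^0=0$ rather than silently discarded. A final check that the gradient and $b_2$ sums retain the full range $n=0,\dots,N-1$ ensures all the off-by-one terms are accounted for.
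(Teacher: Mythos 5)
Your proposal is correct and follows essentially the same route as the paper's proof: test \eqref{stweak2} with (a multiple of) $\eps_2^{n+1}$, sum over $n$, perform summation by parts on the $(\tu^{n+1}-\tu^n,\eps_2^{n+1})_f$ term, and kill the lower boundary term $(\tu^0,\eps_2^1)_f$ via the zero initial condition \eqref{eq:uwinitial}. The index bookkeeping you carry out, including the surviving $\frac{1}{\alpha}(\tu^N,\eps_2^N)_f$ boundary term and the sum collapsing to $n=1,\dots,N-1$, matches the paper exactly.
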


\begin{proof}
    We first take $v = \Delta t \eps_2^{n+1} $ in \eqref{stweak2} to obtain 
\begin{alignat*}{1}
 \dt \bl \eps_2^{n+1}, \tla^{n+1} \br= (\tu^{n+1}-\tu^n, \eps_2^{n+1})_f +\dt \nu_f(\nabla \tu^{n+1},  \nabla \eps_2^{n+1})_f- \dt (b_2^{n+1}, \eps_2^{n+1})_f
\end{alignat*}
If we take the sum over $n=0,\ldots,N-1$ and use summation by parts, we get
\begin{alignat*}{1}
 \dt \sum_{n=0}^{N-1} \bl \eps_2^{n+1}, \tla^{n+1} \br = &  - \dt \sum_{n=1}^{N-1} (\tu^n, \pdt \eps_2^{n+1})_f+\dt\sum_{n=0}^{N-1} \Big( \nu_f (\nabla \tu^{n+1},  \nabla \eps_2^{n+1})_f- (b_2^{n+1}, \eps_2^{n+1})_f \Big) \\
 &+ (\tu^N, \eps_2^N)_f-(\tu^0, \eps_2^{1})_f.
\end{alignat*}
We conclude the proof by using \eqref{eq:uwinitial}.
\end{proof}

To state the stability estimate we need the next definition.
\begin{alignat*}{1}
\Xi_m^N(m_1, m_2, s_1, s_2):=  &\dt \sum_{n=m}^{N-1} \left[\frac{1}{\nu_s}\| m_1^{n+1}\|_{L^2(\Omega_s)}^2+\left(\frac{1}{\nu_f}+\frac{1}{\alpha}\right)\| m_2^{n+1}\|_{L^2(\Omega_f)}^2\right] \\
&+  \dt  \sum_{n=m+1}^{N-1}  \frac{1}{\nu_f\alpha^2} \| \pdt s_2^{n+1}\|_{L^2(\Omega_f)}^2 +\dt  \sum_{n=m}^{N-1} \Big( \frac{\nu_f}{\alpha^2} \| \nabla s_2^{n+1}\|_{L^2(\Omega_f)}^2 + \frac{1}{\alpha}\| s_2^{n+1} \|_{L^2(\Omega_f)}^2 \Big)  \\
&+ \dt  \sum_{n=m}^{N-1} \Big(\frac{1}{\nu_s} \| s_1^{n+1}+s_2^{n+1}\|_{L^2(\Sigma)}^2+\frac{1}{\nu_f}\| s_2^{n+1}\|_{L^2(\Sigma)}^2 \Big)+ \frac{1}{\alpha^2}\|s_2^N\|_{L^2(\Omega_f)}^2.
\end{alignat*}

\begin{theorem}\label{stabilitythm}
Let $\tw, \tu$ solve \eqref{seq1} and \eqref{seq2}  and assuming that $\eps_2^{n+1} \in V_f$ then the following estimate holds
\begin{alignat*}{1}
Z^N(\tw, \tu, \tla)+ \sum_{n=0}^{N-1} S^{n+1}(\tw, \tu, \tla) \le C \Xi_0^N(b_1, b_2, \eps_1, \eps_2).
\end{alignat*}
\end{theorem}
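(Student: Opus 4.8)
The plan is to turn the one-step identity of Lemma~\ref{errorLemma1} into a global-in-time estimate by summation, dispose of the only genuinely troublesome right-hand side term with Lemma~\ref{lemmasummation}, and then bound everything that remains by Cauchy--Schwarz, Young, Poincar\'e and trace inequalities, absorbing all solution-dependent factors into the left-hand side $Z^N+\sum_{n=0}^{N-1}S^{n+1}$.

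First I would sum \eqref{eq:errorlemma1} over $n=0,\dots,N-1$. The $Z$-terms telescope, and since the initial data vanish by \eqref{eq:uwinitial} we have $Z^0(\tw,\tu,\tla)=0$, so
\begin{equation*}
Z^N(\tw,\tu,\tla)+\sum_{n=0}^{N-1}S^{n+1}(\tw,\tu,\tla)=\dt\sum_{n=0}^{N-1}F^{n+1}(\tw,\tu)+\frac{\dt}{\alpha}\sum_{n=0}^{N-1}\bl\eps_2^{n+1},\tla^{n+1}\br.
\end{equation*}
The last sum is the dangerous one, because $\tla^{n+1}$ is a normal flux on $\Sigma$ that the left-hand side does not control. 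This is exactly what Lemma~\ref{lemmasummation} is designed to remove: substituting its identity replaces that sum by interior quantities ($\tu^n$, $\nabla\tu^{n+1}$, $b_2^{n+1}$, and the endpoint term $\frac1\alpha(\tu^N,\eps_2^N)_f$), none of which involves $\tla$. After this step the identity is exact and its right-hand side depends only on the data $b_1,b_2,\eps_1,\eps_2$ and on $\tw^{n+1},\tu^{n+1},\tu^n,\nabla\tu^{n+1},\tu^N$.

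Next I would estimate the right-hand side term by term against the pieces of $\Xi_0^N$. The interior source terms $(b_1^{n+1},\tw^{n+1})_s$ and $(b_2^{n+1},\tu^{n+1})_f$ are handled by Cauchy--Schwarz, Poincar\'e (valid since $\tw,\tu$ vanish on $\Gamma_D$), and Young, absorbing $\nu_s\dt\|\nabla\tw^{n+1}\|_{L^2(\Omega_s)}^2$ and $\nu_f\dt\|\nabla\tu^{n+1}\|_{L^2(\Omega_f)}^2$ into $\sum S^{n+1}$ and leaving the $\frac1{\nu_s}\|b_1^{n+1}\|^2$ and $\frac1{\nu_f}\|b_2^{n+1}\|^2$ contributions of $\Xi$. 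The interface term $\bl\eps_1^{n+1}+\eps_2^{n+1},\tw^{n+1}\br$ is treated the same way after the standard trace inequality $\|\tw^{n+1}\|_{L^2(\Sigma)}\lesssim\|\nabla\tw^{n+1}\|_{L^2(\Omega_s)}$, producing $\frac1{\nu_s}\|\eps_1^{n+1}+\eps_2^{n+1}\|^2_{L^2(\Sigma)}$. The interior contributions coming from Lemma~\ref{lemmasummation} are analogous: $(\tu^n,\pdt\eps_2^{n+1})_f$ and $\nu_f(\nabla\tu^{n+1},\nabla\eps_2^{n+1})_f$ are split by Young so that $\nu_f\dt\|\nabla\tu^{n+1}\|^2$ and $\nu_f\dt\|\nabla\tu^n\|^2$ (available because $\sum_{n=0}^{N-1}S^{n+1}$ controls the gradient at levels $1,\dots,N$) are absorbed and the $\frac1{\nu_f\alpha^2}\|\pdt\eps_2^{n+1}\|^2$ and $\frac{\nu_f}{\alpha^2}\|\nabla\eps_2^{n+1}\|^2$ terms of $\Xi$ are produced, while $(b_2^{n+1},\eps_2^{n+1})_f$ and the endpoint $\frac1\alpha(\tu^N,\eps_2^N)_f$ give the remaining $\frac1\alpha\|\eps_2^{n+1}\|^2_{L^2(\Omega_f)}$, $\frac1\alpha\|b_2^{n+1}\|^2$ and $\frac1{\alpha^2}\|\eps_2^N\|^2_{L^2(\Omega_f)}$ pieces, the factor $\frac14\|\tu^N\|^2_{L^2(\Omega_f)}$ being absorbed into $Z^N$.

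The main obstacle is the remaining interface term $\dt\sum_{n=0}^{N-1}\bl\tu^{n+1}-\tu^n,\eps_2^{n+1}\br$. By Young I would isolate $\frac{\dt}{\nu_f}\|\eps_2^{n+1}\|^2_{L^2(\Sigma)}$ --- which is precisely why that norm occurs in $\Xi$ --- together with $\frac{\nu_f\dt}{4}\|\tu^{n+1}-\tu^n\|^2_{L^2(\Sigma)}$. The difficulty is that $S^{n+1}$ controls $\|\tu^{n+1}-\tu^n\|^2_{L^2(\Omega_f)}$ but not the $H^1$-norm of the difference, so a crude trace inequality is not enough. The remedy is the multiplicative (Gagliardo) trace inequality $\|v\|^2_{L^2(\Sigma)}\lesssim\|v\|_{L^2(\Omega_f)}\|\nabla v\|_{L^2(\Omega_f)}+\|v\|^2_{L^2(\Omega_f)}$ applied to $v=\tu^{n+1}-\tu^n$: the first factor is bounded using the $\frac12\|\tu^{n+1}-\tu^n\|^2_{L^2(\Omega_f)}$ term of $S^{n+1}$, and the gradient $\|\nabla(\tu^{n+1}-\tu^n)\|\le\|\nabla\tu^{n+1}\|+\|\nabla\tu^n\|$ is paired against the $\nu_f\dt\|\nabla\tu^{n+1}\|^2$ dissipation. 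Both resulting contributions carry an extra factor of $\dt$ relative to the corresponding term of $\sum S^{n+1}$, making them lower order, so that for $\dt$ below a fixed threshold they are absorbed. Collecting all the absorbed fractions and choosing the Young parameters (and, if constants are not tracked sharply, $\dt$) small enough that they are dominated by $Z^N+\sum_{n=0}^{N-1}S^{n+1}$ yields $Z^N+\sum_{n=0}^{N-1}S^{n+1}\le C\,\Xi_0^N(b_1,b_2,\eps_1,\eps_2)$, as claimed.
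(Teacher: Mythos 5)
Your overall architecture is exactly the paper's: sum the identity of Lemma \ref{errorLemma1}, use $Z^0(\tw,\tu,\tla)=0$ from \eqref{eq:uwinitial}, eliminate the flux sum $\frac{\dt}{\alpha}\sum_{n}\bl\eps_2^{n+1},\tla^{n+1}\br$ via Lemma \ref{lemmasummation}, and then absorb all solution-dependent quantities into $Z^N+\sum_{n}S^{n+1}$ by Cauchy--Schwarz, Young, Poincar\'e and trace inequalities. All of your term-by-term estimates agree with the paper's except one.

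The exception is $\dt\sum_{n=0}^{N-1}\bl\tu^{n+1}-\tu^n,\eps_2^{n+1}\br$, and there your diagnosis is wrong: a crude trace inequality \emph{is} enough. There is no need to preserve the difference structure, because the summand already carries the factor $\dt$ with which the gradient dissipation in $S^{n+1}$ is weighted. Splitting by the triangle inequality and using $\|\tu^{k}\|_{L^2(\Sigma)}\le C\|\nabla\tu^{k}\|_{L^2(\Omega_f)}$ (valid since $\tu^k\in V_f$ vanishes on $\Gamma_D^f$), Young's inequality gives
\begin{equation*}
\dt\,\bl\tu^{n+1}-\tu^n,\eps_2^{n+1}\br\le\frac{\nu_f\dt}{16}\bigl(\|\nabla\tu^{n+1}\|_{L^2(\Omega_f)}^2+\|\nabla\tu^{n}\|_{L^2(\Omega_f)}^2\bigr)+\frac{C\dt}{\nu_f}\|\eps_2^{n+1}\|_{L^2(\Sigma)}^2 .
\end{equation*}
Summing over $n$ and using $\nabla\tu^0=0$, the first bracket is dominated by $\frac18\sum_{n=0}^{N-1}S^{n+1}$, and the second is precisely the $\frac{1}{\nu_f}\|\eps_2^{n+1}\|^2_{L^2(\Sigma)}$ contribution to $\Xi_0^N$ --- with no restriction on $\dt$. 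Your multiplicative-trace detour is internally consistent, but after Young it produces terms of the form $\nu_f^2(\dt)^2\|\nabla(\tu^{n+1}-\tu^n)\|_{L^2(\Omega_f)}^2$ and $\nu_f\dt\|\tu^{n+1}-\tu^n\|_{L^2(\Omega_f)}^2$, which can be absorbed into the dissipation terms $\dt\nu_f\|\nabla\tu^{n+1}\|_{L^2(\Omega_f)}^2$ and $\frac12\|\tu^{n+1}-\tu^n\|_{L^2(\Omega_f)}^2$ of $S^{n+1}$ only when $\dt\les\nu_f^{-1}$. The theorem, as stated and as proved in the paper, is unconditional in $\dt$, so your argument establishes a strictly weaker, conditional statement. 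Replacing that single step by the direct estimate above removes the threshold and makes your proof coincide with the paper's.
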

\begin{proof}
 Using Lemma \ref{errorLemma1} and taking the sum we get 
\begin{alignat*}{1}
Z^N(\tw, \tu, \tla)+ \sum_{n=0}^{N-1} S^{n+1}(\tw, \tu, \tla) =  Z^0(\tw, \tu, \tla) + \dt \sum_{n=0}^{N-1} F^{n+1}(\tw, \tu) + \frac{\dt}{\alpha} \sum_{n=0}^{N-1} \bl \eps_2^{n+1}, \tla^{n+1} \br.
\end{alignat*}

Using the Poincar{\'e} and trace inequalities, we easily obtain the following bound
\begin{alignat*}{1}
    \dt \sum_{n=0}^{N-1} F^{n+1}(\tw, \tu) \le & \frac{1}{8} \sum_{n=0}^{N-1} S^{n+1}(\tw, \tu, \tla) + C \dt \sum_{n=0}^{N-1} (\frac{1}{\nu_s}\| b_1^{n+1}\|_{L^2(\Omega_s)}^2+\frac{1}{\nu_f}\| b_2^{n+1}\|_{L^2(\Omega_f)}^2) \\
    & +C  \dt \sum_{n=0}^{N-1} \Big(\frac{1}{\nu_s} \| \eps_1^{n+1}+\eps_2^{n+1}\|_{L^2(\Sigma)}^2+\frac{1}{\nu_f}\| \eps_2^{n+1}\|_{L^2(\Sigma)}^2\Big).
\end{alignat*}

Using Lemma \ref{lemmasummation} and the Poincar{\'e} inequality we have 
\begin{alignat*}{1}
     \frac{\dt}{\alpha} \sum_{n=0}^{N-1} \bl \eps_2^{n+1}, \tla^{n+1} \br \le  & \frac{1}{4} \| \tu^N\|_{L^2(\Omega_f)}^2+\frac{1}{8} \sum_{n=0}^{N-1} S^{n+1}(\tw, \tu, \tla)  \\ 
     &+  C  \dt \sum_{n=1}^{N-1}\frac{1}{\nu_f\alpha^2} \| \pdt \eps_2^{n+1}\|_{L^2(\Omega_f)}^2 + C\dt \sum_{n=0}^{N-1} \Big(\frac{\nu_f}{\alpha^2} \| \nabla \eps_2^{n+1}\|_{L^2(\Omega_f)}^2 + \frac{1}{\alpha}\|\eps_2^{n+1} \|_{L^2(\Omega_f)}^2 \Big)  \\
     &+  C \frac{\dt}{\alpha} \sum_{n=0}^{N-1}  \| b_2^{n+1} \|_{L^2(\Omega_f)}^2  +C \frac{1}{\alpha^2}\|\eps_2^N\|_{L^2(\Omega_f)}^2.
\end{alignat*}
It follows from \eqref{eq:uwinitial} and the definition of $Z^0$ that $Z^0(\tw, \tu, \tla)=0$. 
We finish the proof by combining the above estimates. 
\end{proof}

The discrete time derivative of $\tu$ and $\tw$ solves \eqref{seq1} and \eqref{seq2} with $n\ge 1$ and discrete time derivatives of the data as the right-hand sides. Note that the initial conditions are $\pdt w^1$ in $\Omega_s$ and $\pdt u^1$ in $\Omega_f$. We then have the following corollary.

\begin{corollary}\label{cordt}
Let  $\tw, \tu$ solve \eqref{seq1} and \eqref{seq2}  and  assuming that $\pdt\eps_2^{n+1} \in V_f$ then
\begin{alignat*}{1}
&Z^N(\pdt \tw, \pdt \tu, \pdt \tla)+ \sum_{n=1}^{N-1} S^{n+1}(\pdt\tw, \pdt \tu, \pdt\tla) \\
\le & C Z^{1}(\pdt \tw, \pdt \tu, \pdt \tla)+ C \Xi_{1}^N(\pdt b_1, \pdt b_2, \pdt \eps_1, \pdt \eps_2)+C\|\pdt\eps_2^{2}\|_{L^2(\Omega_f)}^2. 
 \end{alignat*}
\end{corollary}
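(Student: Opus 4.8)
The plan is to exploit the linearity of the scheme. As noted just before the statement, the time differences $(\pdt\tw,\pdt\tu,\pdt\tla)$ satisfy exactly the weak system \eqref{stweak1}--\eqref{stweak3}, now for $n\ge 1$, with the differenced data $(\pdt b_1,\pdt b_2,\pdt\eps_1,\pdt\eps_2)$ in place of $(b_1,b_2,\eps_1,\eps_2)$; the only structural difference from the hypotheses of Theorem \ref{stabilitythm} is that the lowest time level is now $n=1$, where the data $(\pdt\tw^1,\pdt\tu^1,\pdt\tla^1)$ need not vanish. I would therefore transcribe the proof of Theorem \ref{stabilitythm}, shifting every summation index from $0$ to $1$. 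Applying Lemma \ref{errorLemma1} to the differenced system and summing over $n=1,\dots,N-1$ gives the telescoped identity
\begin{align*}
Z^N(\pdt\tw,\pdt\tu,\pdt\tla)+\sum_{n=1}^{N-1}S^{n+1}(\pdt\tw,\pdt\tu,\pdt\tla)
&=Z^{1}(\pdt\tw,\pdt\tu,\pdt\tla)+\dt\sum_{n=1}^{N-1}F^{n+1}(\pdt\tw,\pdt\tu)\\
&\quad+\frac{\dt}{\alpha}\sum_{n=1}^{N-1}\bl\pdt\eps_2^{n+1},\pdt\tla^{n+1}\br,
\end{align*}
the exact analogue of the first identity in the proof of Theorem \ref{stabilitythm}, except that the term $Z^1(\pdt\tw,\pdt\tu,\pdt\tla)$ survives because the telescoping no longer terminates at the vanishing level \eqref{eq:uwinitial}. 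This already explains the appearance of $CZ^1(\pdt\tw,\pdt\tu,\pdt\tla)$ in the conclusion.

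The forcing sum $\dt\sum_{n=1}^{N-1}F^{n+1}(\pdt\tw,\pdt\tu)$ is then bounded verbatim as in Theorem \ref{stabilitythm} using the Poincar\'e, trace and Young inequalities, absorbing a factor $\tfrac18\sum_{n=1}^{N-1} S^{n+1}(\pdt\tw,\pdt\tu,\pdt\tla)$ into the left-hand side and leaving the domain and interface contributions in the differenced data; these are precisely the corresponding entries of $\Xi_1^N(\pdt b_1,\pdt b_2,\pdt\eps_1,\pdt\eps_2)$.

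The one genuinely new computation is the analogue of Lemma \ref{lemmasummation} for the last term. Taking $v=\dt\,\pdt\eps_2^{n+1}$ in the differenced equation \eqref{stweak2} (valid for $n\ge1$) and summing by parts from $n=1$ to $N-1$, the interior term produces the second difference $\pdt(\pdt\eps_2)$, which matches the $\|\pdt s_2^{n+1}\|^2$ entry of $\Xi$ (here $s_2=\pdt\eps_2$), and the upper boundary term $\tfrac1\alpha(\pdt\tu^N,\pdt\eps_2^N)_f$ is controlled by Young and Poincar\'e as before, contributing $\tfrac14\|\pdt\tu^N\|_{L^2(\Omega_f)}^2$ (absorbed into $Z^N$ on the left) and the $\tfrac1{\alpha^2}\|\pdt\eps_2^N\|^2$ entry of $\Xi$. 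The crucial difference is the \emph{lower} boundary term: because the summation now starts at $n=1$ rather than $n=0$, summation by parts leaves $-\tfrac1\alpha(\pdt\tu^1,\pdt\eps_2^2)_f$ in place of a term containing the vanishing datum $\pdt\tu^0$. Since $\pdt\tu^1\ne0$, this term does not drop out, and Young's inequality yields
\begin{align*}
\frac1\alpha\big|(\pdt\tu^1,\pdt\eps_2^2)_f\big|\le C\|\pdt\tu^1\|_{L^2(\Omega_f)}^2+C\|\pdt\eps_2^2\|_{L^2(\Omega_f)}^2\le CZ^1(\pdt\tw,\pdt\tu,\pdt\tla)+C\|\pdt\eps_2^2\|_{L^2(\Omega_f)}^2,
\end{align*}
using $Z^1\ge\tfrac12\|\pdt\tu^1\|_{L^2(\Omega_f)}^2$. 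This is the source of the extra terms $CZ^1(\pdt\tw,\pdt\tu,\pdt\tla)$ and $C\|\pdt\eps_2^2\|_{L^2(\Omega_f)}^2$ in the statement.

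The main obstacle is thus not a new estimate but the careful index-shift bookkeeping in this summation-by-parts step, where the nonvanishing initial datum of the differenced problem generates the lower boundary term absent from Lemma \ref{lemmasummation}. Once these bounds are collected, absorbing $\tfrac18\sum_{n=1}^{N-1} S^{n+1}$ and $\tfrac14\|\pdt\tu^N\|_{L^2(\Omega_f)}^2$ into the left-hand side of the telescoped identity yields the claimed estimate.
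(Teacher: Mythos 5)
Your proposal is correct and follows essentially the same route as the paper: apply the identity of Lemma \ref{errorLemma1} to the differenced system, sum from $n=1$ so that $Z^1(\pdt\tw,\pdt\tu,\pdt\tla)$ survives the telescoping, bound the $F^{n+1}$ sum as in Theorem \ref{stabilitythm}, and handle the lower boundary term $-\frac{1}{\alpha}(\pdt\tu^1,\pdt\eps_2^2)_f$ from the summation by parts via Young's inequality, absorbing $\|\pdt\tu^1\|_{L^2(\Omega_f)}^2$ into $Z^1$. This matches the paper's proof step for step, including the identification of the nonvanishing initial datum as the sole source of the two extra terms in the bound.
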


\begin{proof}
  The argument is identical to those of Lemma \ref{errorLemma1}, Lemma \ref{lemmasummation} and Theorem \ref{stabilitythm} except the initial conditions are not zeros. To be specific, using the relation \eqref{eq:errorlemma1} and sum from $n=1$ to $n=N-1$, we have
\begin{alignat*}{1}
&Z^N(\pdt\tw, \pdt\tu, \pdt\tla)+ \sum_{n=1}^{N-1} S^{n+1}(\pdt\tw, \pdt\tu, \pdt\tla) \\
&=  Z^1(\pdt\tw, \pdt\tu, \pdt\tla) + \dt \sum_{n=1}^{N-1} F^{n+1}(\pdt\tw, \pdt\tu) + \frac{\dt}{\alpha} \sum_{n=1}^{N-1} \bl \pdt\eps_2^{n+1}, \pdt\tla^{n+1} \br.
\end{alignat*}
The term involving $F^{n+1}$ can be estimated similarly as that of Theorem \ref{stabilitythm}. To estimate the last term, we follow the same procedure in Lemma \ref{lemmasummation} and Theorem \ref{stabilitythm}. We have an extra term due to the initial condition which can be estimated as
\begin{equation}
  -(\pdt\tu^1, \pdt\eps_2^{2})_f\le\frac14\|\pdt\tu^1\|_{L^2(\Omega_f)}^2+\|\pdt\eps_2^{2}\|_{L^2(\Omega_f)}^2,
\end{equation}
where the first term can be absorbed into $Z^1(\pdt\tw, \pdt\tu, \pdt\tla)$. We then immediately obtain the result.
\end{proof}
Similarly, the second-order discrete time derivative of $\tu$ and $\tw$ solves \eqref{seq1} and \eqref{seq2} with $n\ge2$ and the second-order discrete time derivatives of the data as the right-hand sides. Notice that the initial conditions are $\pdt^2 w^2$ in $\Omega_s$ and $\pdt^2 u^2$ in $\Omega_f$. We then have the following corollary.
\begin{corollary}\label{cordtt}
Let  $\tw, \tu$ solve \eqref{seq1} and \eqref{seq2}  and  assuming that $\pdt^2\eps_2^{n+1} \in V_f$ then
\begin{alignat*}{1}
&Z^N(\pdt^2 \tw, \pdt^2 \tu, \pdt^2 \tla)+ \sum_{n=2}^{N-1} S^{n+1}(\pdt^2\tw, \pdt^2 \tu, \pdt^2\tla) \\
\le& CZ^{2}(\pdt^2 \tw, \pdt^2 \tu, \pdt^2 \tla) + C \Xi_2^N(\pdt^2 b_1, \pdt^2 b_2, \pdt^2 \eps_1, \pdt^2 \eps_2)+C\|\pdt^2\eps_2^{3}\|_{L^2(\Omega_f)}^2. 
 \end{alignat*}
\end{corollary}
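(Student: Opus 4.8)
The plan is to repeat verbatim the argument used for Corollary \ref{cordt}, replacing the first-order difference operator $\pdt$ by the second-order operator $\pdt^2$ everywhere and shifting the starting index accordingly. The key structural observation is that the weak system \eqref{stweak} is linear in its unknowns and its data, so applying $\pdt^2$ shows that $(\pdt^2\tw,\pdt^2\tu,\pdt^2\tla)$ again solves a system of the form \eqref{stweak}, now with right-hand sides $\pdt^2 b_1,\pdt^2 b_2,\pdt^2\eps_1,\pdt^2\eps_2$. Forming the second difference of the equations indexed at level $n+1$ requires the equations at levels $n$ and $n-1$, so the differenced system is valid only for $n\ge 2$; its lowest level $n=2$ now carries the nonzero ``initial data'' $\pdt^2 w^2,\pdt^2 u^2$, in contrast with the homogeneous conditions \eqref{eq:uwinitial}.

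Next I would apply the identity \eqref{eq:errorlemma1} of Lemma \ref{errorLemma1} to these differenced quantities and sum from $n=2$ to $N-1$ rather than from $0$, which yields
\[
Z^N(\pdt^2\tw,\pdt^2\tu,\pdt^2\tla) + \sum_{n=2}^{N-1} S^{n+1}(\pdt^2\tw,\pdt^2\tu,\pdt^2\tla) = Z^2(\pdt^2\tw,\pdt^2\tu,\pdt^2\tla) + \dt\sum_{n=2}^{N-1}F^{n+1}(\pdt^2\tw,\pdt^2\tu) + \frac{\dt}{\alpha}\sum_{n=2}^{N-1}\bl \pdt^2\eps_2^{n+1},\pdt^2\tla^{n+1}\br.
\]
The term $\dt\sum F^{n+1}(\pdt^2\tw,\pdt^2\tu)$ has exactly the structure handled in Theorem \ref{stabilitythm}, so by Poincar\'e and trace inequalities it absorbs a factor $\tfrac18\sum S^{n+1}$ while leaving the $b$- and $\eps$-contributions bundled into $\Xi_2^N(\pdt^2 b_1,\pdt^2 b_2,\pdt^2\eps_1,\pdt^2\eps_2)$.

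For the final coupling term I would mimic Lemma \ref{lemmasummation}, taking $v=\dt\,\pdt^2\eps_2^{n+1}$ in the $\pdt^2$-version of \eqref{stweak2} and summing by parts from $n=2$ to $N-1$. The boundary contributions of the summation by parts are $(\pdt^2\tu^N,\pdt^2\eps_2^N)_f-(\pdt^2\tu^2,\pdt^2\eps_2^3)_f$; the upper term is absorbed into $Z^N$ and $\Xi_2^N$ as in Theorem \ref{stabilitythm}, while the lower term $-(\pdt^2\tu^2,\pdt^2\eps_2^3)_f$ is the genuinely new contribution created by the nonzero initial level. Bounding it by Young's inequality,
\[
-(\pdt^2\tu^2,\pdt^2\eps_2^3)_f \le \tfrac14\|\pdt^2\tu^2\|_{L^2(\Omega_f)}^2 + \|\pdt^2\eps_2^3\|_{L^2(\Omega_f)}^2,
\]
the first term is absorbed into $Z^2(\pdt^2\tw,\pdt^2\tu,\pdt^2\tla)$ (using $Z^2\ge\tfrac12\|\pdt^2\tu^2\|_{L^2(\Omega_f)}^2$) and the second yields the stated $\|\pdt^2\eps_2^3\|_{L^2(\Omega_f)}^2$ term. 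Combining the three estimates gives the claim.

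I do not expect any new analytic difficulty here; the work is essentially bookkeeping, and the main point to watch is the index arithmetic. One must verify that the lowest valid level is indeed $n=2$, that the leftover summation-by-parts boundary term lands precisely at $(\pdt^2\tu^2,\pdt^2\eps_2^3)_f$ so the single residual norm $\|\pdt^2\eps_2^3\|_{L^2(\Omega_f)}^2$ appears exactly as in the statement, and that the resulting interior difference $\pdt(\pdt^2\eps_2^{n+1})=\pdt^3\eps_2^{n+1}$ is correctly absorbed into the $\pdt s_2$ slot of $\Xi_2^N$ (whose summation also begins at $n=3$).
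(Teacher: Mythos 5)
Your proposal is correct and follows exactly the route the paper intends: the paper states Corollary \ref{cordtt} as the "similar" analogue of Corollary \ref{cordt} (apply the identity of Lemma \ref{errorLemma1} summed from $n=2$, treat the $F$-term as in Theorem \ref{stabilitythm}, and redo Lemma \ref{lemmasummation}'s summation by parts with the leftover boundary term $-(\pdt^2\tu^2,\pdt^2\eps_2^3)_f$ absorbed via Young's inequality into $Z^2$ plus $\|\pdt^2\eps_2^3\|_{L^2(\Omega_f)}^2$), which is precisely your argument, including the correct index bookkeeping for the $\pdt^3\eps_2$ sum starting at $n=3$ inside $\Xi_2^N$.
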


\begin{remark}\label{stabilityremark}
We can also analyze the problem \eqref{seq1}-\eqref{seq2} but replacing the homogeneous  Neumann boundary conditions with  homogeneous  Dirichlet boundary conditions. In other words, the problem with pure Dirichlet boundary conditions that is $\Gamma_D^i=\partial \Omega_i \backslash \Sigma $ for $i=s,f$. In this case exactly the same estimates hold. Now of course, we assume $\eps_2^{n} \in  V_f$ where $V_f$ has zero Dirichlet boundary conditions on  $\Gamma_D^f=\partial \Omega_f \backslash \Sigma $. 
\end{remark}

\subsection{$H^2$ stability in a special case}
In this section we prove $H^2$ estimates for $\tu$ in a special configuration.  Again, we assume that $\Omega=(0,1)^2$, and now assume $\Sigma$ is parallel to the $x$-axis (see Figure \ref{figure:domain}). We take advantage of the fact that the sides composing $ \Gamma_{N\!e}$ are perpendicular to the $x$-axis and, moreover, we also notice that $\Sigma$ is parallel to the $x$-axis. 

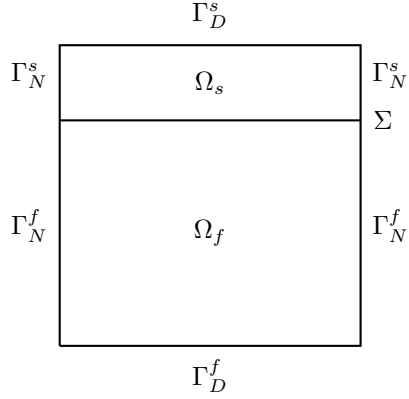
\begin{figure}[ht]
\centering
\begin{tikzpicture}
\draw[thick] plot coordinates {(0,0) (4,0) (4,4) (0,4) (0,0)};
\draw[thick] plot coordinates {(0,3) (4,3) };

 \node at (2,1.5) {$\Omega_f$};
 \node at (2,3.5) {$\Omega_s$};
 \node at (4.3,3) {$\Sigma$};
 \node at (2,-0.4) {$\Gamma^f_D$};
 \node at (2,4.4) {$\Gamma^s_D$};
 \node at (-0.4,1.6) {$ \Gamma_{N\!e}^f$};
 \node at (4.4,1.6) {$ \Gamma_{N\!e}^f$};
 \node at (-0.4,3.6) {$ \Gamma_{N\!e}^s$};
 \node at (4.4,3.6) {$ \Gamma_{N\!e}^s$};
\end{tikzpicture}
\caption{The domains $\Omega_f$ and $\Omega_s$ with horizontal interface $\Sigma$.} \label{figure:domain}
\end{figure}

Then, in this particular case we have, for $n\ge0$,
\begin{subequations}\label{seq1x}
\begin{alignat}{2}
\pdt \p_x \tw^{n+1}- \nu_s \Delta \partial_x \tw^{n+1}=&\p_x b_1^{n+1}, \quad && \text{ in }  \Omega_s, \\
\p_x \tw^{n+1}=&0 , \quad &&  \text{ on } \Gamma^s_D, \\
 \p_x \tw^{n+1}=&0, \quad &&  \text{ on }   \Gamma_{N\!e}^s,\\
 \alpha\p_x\tw^{n+1}+ \nu_s \p_{\bn_s} \p_x \tw^{n+1} = &  \alpha  \p_x \tu^{n} - \nu_f \p_{\bn_f} \p_x \tu^n+ \p_x \eps_1^{n+1}   \quad  && \text{ on }     \Sigma,\label{eq:rrdxw}
\end{alignat}
\end{subequations}

\begin{subequations}\label{seq2x}
\begin{alignat}{2}
\pdt \p_x \tu^{n+1}- \nu_f\Delta  \p_x \tu^{n+1}=&\p_x b_2^{n+1}, \quad && \text{ in } \Omega_f, \label{seq2.1x} \\
\p_x  \tu^{n+1}=&0 , \quad &&  \text{ on }  \Gamma^f_D, \\
\p_x  \tu^{n+1}=&0, \quad &&  \text{ on }   \Gamma_{N\!e}^f,\\
\alpha \p_x  \tu^{n+1}+ \nu_f \p_{\bn_f}  \p_x \tu^{n+1} = & \alpha  \p_x \tw^{n+1} + \nu_f \p_{\bn_f} \p_x  \tu^n + \p_x  \eps_2^{n+1}  \quad && \text{ on }     \Sigma,\label{eq:rrdxu}
\end{alignat}
\end{subequations}
with 
\begin{equation}
   \p_x w^0=\p_xu^0=0
\end{equation}
according to \eqref{eq:uwinitial}.

\begin{remark}
   The assumption that the interface is perpendicular to the sides and parallel to the $x$-direction is crucial. First, notice that, in this case, the outer normal direction $\bm{n}$ to two sides aligns with the $x$-direction, leading to $\partial_xw^{n+1}=\partial_{\bm{n}}w^{n+1}=0$ on $\Gamma_{N\!e}^s$ and $\partial_xu^{n+1}=\partial_{\bm{n}}u^{n+1}=0$ on $\Gamma_{N\!e}^f$ where we use \eqref{eq:nbw} and \eqref{eq:nbu}. Secondly, given \eqref{eq:rrw} and \eqref{eq:rru}, the conditions \eqref{eq:rrdxw} and \eqref{eq:rrdxu} are only valid in the case where the interface is perpendicular to the sides. 
  Unfortunately, we have not yet found a general method to prove the $H^2$ estimates for the non-horizontal case which it proved to be quite challenging. However, we believe this technique could be useful for the analysis of fluid-structure interaction (FSI) problems, given the similarity of the coupling conditions.
\end{remark}

We then get an immediate Corollary from Remark \ref{stabilityremark} and Theorem \ref{stabilitythm}.
\begin{corollary}\label{corx}
Suppose that $\Sigma$ is perpendicular to the two sides of $ \Gamma_{N\!e}$ as  in Figure \ref{figure:domain}. Let $\tw, \tu$ solve \eqref{seq1} and \eqref{seq2}  and  assuming that $\p_{x} \eps_2^{n} \in \{ v \in H^1(\Omega_f): v=0 \text{ on } \p \Omega_f \backslash \Sigma\}  $ on $ \Gamma_{N\!e}^f$ then the following estimate holds
\begin{alignat*}{1}
Z^N(\p_x \tw, \p_x \tu, \p_x \tla)+ \sum_{n=0}^{N-1} S^{n+1}(\p_x \tw, \p_x \tu, \p_x \tla) \le  C \Xi_0^N(\p_x b_1, \p_x b_2, \p_x \eps_1,  \p_x\eps_2).
\end{alignat*}
\end{corollary}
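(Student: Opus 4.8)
The plan is to recognize that the triple $(\partial_x \tw, \partial_x \tu, \partial_x \tla)$ is itself a solution of the abstract system \eqref{seq1}--\eqref{seq2}, but now posed with \emph{pure} homogeneous Dirichlet boundary conditions on $\partial\Omega_i \setminus \Sigma$ and with the $x$-differentiated data $(\partial_x b_1, \partial_x b_2, \partial_x \eps_1, \partial_x \eps_2)$. Once this identification is made, the estimate is exactly the conclusion of Theorem \ref{stabilitythm} in the Dirichlet form guaranteed by Remark \ref{stabilityremark}, applied to the differentiated problem; no new energy computation is needed.

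First I would verify that \eqref{seq1x}--\eqref{seq2x} genuinely has the structure of \eqref{seq1}--\eqref{seq2}. The interior equations follow by differentiating the heat equations in $x$, using that $\partial_x$ commutes with $\pdt$ and $\Delta$. The interface Robin conditions \eqref{eq:rrdxw}--\eqref{eq:rrdxu} follow from \eqref{eq:rrw}--\eqref{eq:rru} by applying the tangential derivative $\partial_x$ along $\Sigma$: because the interface is flat and parallel to the $x$-axis, the (constant) normals $\bn_s,\bn_f$ point along the $y$-axis, so $\partial_x$ commutes with $\partial_{\bn_s}$ and $\partial_{\bn_f}$. Consequently the multiplier of the differentiated problem is precisely $\partial_x \tla^{n+1} = \nu_f \partial_{\bn_f}(\partial_x \tu^{n+1})$, which is exactly the quantity appearing in the statement.

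The crucial step --- and the only place the special geometry enters --- is checking that $\partial_x \tw$ and $\partial_x \tu$ carry homogeneous Dirichlet data on the entire boundary away from $\Sigma$. On the horizontal edges $\Gamma_D^i$, the operator $\partial_x$ is the tangential derivative of a vanishing Dirichlet trace, so $\partial_x \tw = \partial_x \tu = 0$ there. On the vertical edges $\Gamma_{N\!e}^i$ the outward normal is aligned with the $x$-axis, so the homogeneous Neumann conditions \eqref{eq:nbw}, \eqref{eq:nbu} give $\partial_x \tw = \partial_{\bn}\tw = 0$ and $\partial_x \tu = \partial_{\bn}\tu = 0$. Thus the Neumann boundary of the original problem is converted into a Dirichlet boundary for the differentiated problem, and all of $\partial\Omega_i \setminus \Sigma$ now carries homogeneous Dirichlet data --- precisely the setting of Remark \ref{stabilityremark}. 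The stated hypothesis $\partial_x \eps_2^n \in \{ v \in H^1(\Omega_f): v = 0 \text{ on } \partial\Omega_f \setminus \Sigma\}$ is exactly the admissibility requirement $\eps_2^{n+1}\in V_f$ for that Dirichlet variant.

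With the identification complete, I would simply invoke Theorem \ref{stabilitythm} (in the Dirichlet version of Remark \ref{stabilityremark}) for the differentiated system; its conclusion is verbatim the claimed bound with every argument replaced by its $x$-derivative and the data norms in $\Xi_0^N$ now measuring $\partial_x b_1, \partial_x b_2, \partial_x \eps_1, \partial_x \eps_2$. The main obstacle is therefore not analytic but structural: it is the geometric verification above, which is what forces the restriction to a flat interface perpendicular to $\Gamma_{N\!e}$. For a non-aligned interface neither the commutation $\partial_x \partial_{\bn} = \partial_{\bn}\partial_x$ nor the conversion of the Neumann condition into a homogeneous Dirichlet condition for $\partial_x \tu$ survives, which is exactly why the $H^2$ argument does not extend to the general configuration.
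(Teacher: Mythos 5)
Your proposal is correct and follows exactly the paper's route: differentiate the system in $x$, observe that the flat interface and the $x$-aligned normals on $\Gamma_{N\!e}$ turn the Neumann conditions into homogeneous Dirichlet conditions for $\p_x\tw,\p_x\tu$ (so the differentiated problem is the pure-Dirichlet variant covered by Remark \ref{stabilityremark}), and then apply Theorem \ref{stabilitythm} to the differentiated data. The only detail worth adding for completeness is the observation that the initial conditions also differentiate to zero, $\p_x w^0=\p_x u^0=0$, which the paper notes via \eqref{eq:uwinitial}.
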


\begin{corollary}\label{corH2}
Under the hypothesis of Corollaries \ref{corx} and \ref{cordt} we have 
\begin{alignat*}{1}
    \dt \sum_{n=0}^{N-1} \nu_f \|D^2 \tu^{n+1}\|_{L^2(\Omega_f)}^2 \le  &  C \Big( \nu_f\Xi_1^N(\pdt b_1, \pdt b_2, \pdt \eps_1, \pdt \eps_2)+ \Xi_0^N(\p_x b_1, \p_x b_2, \p_x \eps_1,  \p_x\eps_2)\Big) \\
&+  C\nu_f\|\pdt\eps_2^{2}\|_{L^2(\Omega_f)}^2+ C \nu_fZ^{1}(\pdt \tw, \pdt \tu, \pdt \tla)\\
&+C  \dt \sum_{n=0}^{N-1} \nu_f \| b_2^{n+1}\|_{L^2(\Omega_f)}^2.
\end{alignat*}
\end{corollary}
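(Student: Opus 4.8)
The plan is to reconstruct the full Hessian $D^2\tu^{n+1}$ from two pieces: its purely tangential part, which is controlled directly by Corollary \ref{corx}, and the normal--normal second derivative, which I extract algebraically from the interior equation \eqref{seq2.1}. Since in this configuration $\Omega_f$ is a rectangle with $\Sigma$ and $\Gamma_D^f$ horizontal and $\Gamma_{N\!e}^f$ vertical, I would begin from the pointwise identity $\|D^2\tu^{n+1}\|_{L^2(\Omega_f)}^2=\|\p_x\p_x\tu^{n+1}\|_{L^2(\Omega_f)}^2+2\|\p_x\p_y\tu^{n+1}\|_{L^2(\Omega_f)}^2+\|\p_y\p_y\tu^{n+1}\|_{L^2(\Omega_f)}^2$ and treat the three groups separately.

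First, the two derivatives that contain an $x$-differentiation combine as $\|\p_x\p_x\tu^{n+1}\|_{L^2(\Omega_f)}^2+\|\p_x\p_y\tu^{n+1}\|_{L^2(\Omega_f)}^2=\|\nabla\p_x\tu^{n+1}\|_{L^2(\Omega_f)}^2$. Because $\p_x\tu,\p_x\tw,\p_x\tla$ solve \eqref{seq1x}--\eqref{seq2x}, which is exactly of the form \eqref{seq1}--\eqref{seq2} with data $\p_x b_i$ and $\p_x\eps_i$, Corollary \ref{corx} applies and gives $\dt\sum_{n=0}^{N-1}\nu_f\|\nabla\p_x\tu^{n+1}\|_{L^2(\Omega_f)}^2\le\sum_{n=0}^{N-1}S^{n+1}(\p_x\tw,\p_x\tu,\p_x\tla)\le C\,\Xi_0^N(\p_x b_1,\p_x b_2,\p_x\eps_1,\p_x\eps_2)$, which is precisely the $\Xi_0^N$ contribution in the statement.

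Next I would recover the missing component $\p_y\p_y\tu^{n+1}$ from the interior equation \eqref{seq2.1}, which rearranges to the pointwise identity $\nu_f\,\p_y\p_y\tu^{n+1}=\pdt\tu^{n+1}-b_2^{n+1}-\nu_f\,\p_x\p_x\tu^{n+1}$ in $\Omega_f$. Taking $L^2$-norms and using the triangle inequality, the $\p_x\p_x\tu^{n+1}$ term is already controlled by the previous step and $b_2^{n+1}$ yields the $\dt\sum\nu_f\|b_2^{n+1}\|^2$ term, so the only remaining quantity is $\dt\sum_{n}\|\pdt\tu^{n+1}\|_{L^2(\Omega_f)}^2$. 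For $n\ge 1$, since $\pdt\tu^{n+1}=0$ on $\Gamma_D^f$, Poincar\'e gives $\|\pdt\tu^{n+1}\|_{L^2(\Omega_f)}^2\le C\|\nabla\pdt\tu^{n+1}\|_{L^2(\Omega_f)}^2$, and $\dt\sum_{n=1}^{N-1}\nu_f\|\nabla\pdt\tu^{n+1}\|_{L^2(\Omega_f)}^2\le\sum_{n=1}^{N-1}S^{n+1}(\pdt\tw,\pdt\tu,\pdt\tla)$ is bounded by Corollary \ref{cordt}, producing the $\Xi_1^N(\pdt b_1,\pdt b_2,\pdt\eps_1,\pdt\eps_2)$, the $\|\pdt\eps_2^{2}\|^2$, and the $Z^1$ terms; the single $n=0$ term $\|\pdt\tu^{1}\|_{L^2(\Omega_f)}^2$ is absorbed directly into $Z^1(\pdt\tw,\pdt\tu,\pdt\tla)$. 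Collecting the three groups with their appropriate $\nu_f$-weights then gives the claimed estimate.

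The main obstacle is the recovery of the normal--normal derivative $\p_y\p_y\tu^{n+1}$: I deliberately avoid a general elliptic $H^2$-regularity argument, since the special geometry lets me obtain $\p_x\p_x\tu^{n+1}$ from Corollary \ref{corx} and then solve the interior equation \eqref{seq2.1} algebraically for $\p_y\p_y\tu^{n+1}$. This is exactly where the horizontal interface and perpendicular Neumann sides are essential: only then does $x$-differentiation preserve all the boundary and interface conditions, so that $\p_x\tu,\p_x\tw,\p_x\tla$ again fit the stability framework \eqref{seq1}--\eqref{seq2}; for a non-aligned interface the tangential derivative would couple into the Robin transmission conditions and this decoupling would break down. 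The secondary technical point is careful bookkeeping of the $\nu_f$-weights when combining the contributions from Corollaries \ref{corx} and \ref{cordt}.
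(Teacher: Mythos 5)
Your proposal is correct and follows essentially the same route as the paper's proof: both control $\nabla\p_x\tu^{n+1}$ via Corollary \ref{corx} (using that $\p_x\tw,\p_x\tu,\p_x\tla$ satisfy \eqref{seq1x}--\eqref{seq2x}), recover the remaining second derivative from the interior equation \eqref{seq2.1}, and bound the resulting $\pdt\tu^{n+1}$ contribution with Corollary \ref{cordt}. The only cosmetic differences are that the paper estimates $\|\Delta\tu^{n+1}\|_{L^2(\Omega_f)}$ and then uses $\|\p_y^2\tu^{n+1}\|^2\le 2\bigl(\|\p_x^2\tu^{n+1}\|^2+\|\Delta\tu^{n+1}\|^2\bigr)$ rather than solving pointwise for $\p_y\p_y\tu^{n+1}$, and your Poincar\'e-plus-$S^{n+1}$ treatment of $\dt\sum_{n}\|\pdt\tu^{n+1}\|_{L^2(\Omega_f)}^2$ is an explicit (and equally valid) version of the paper's more implicit appeal to Corollary \ref{cordt}.
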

\begin{proof}
From Corollary \ref{corx} we get
\begin{alignat*}{1}
    \dt \sum_{n=0}^{N-1} \nu_f \|\nabla \p_x \tu^{n+1}\|_{L^2(\Omega_f)}^2 \le    C \Xi_0^N(\p_x b_1, \p_x b_2, \p_x \eps_1,  \p_x\eps_2).
\end{alignat*}
Moreover,  using \eqref{seq2.1} and Corollary \ref{cordt}, we have
\begin{alignat*}{1}
    \dt \sum_{n=0}^{N-1} \nu_f \|\Delta \tu^{n+1}\|_{L^2(\Omega_f)}^2 = &  \dt \sum_{n=0}^{N-1} \nu_f \|\pdt \tu^{n+1}- b_2^{n+1}\|_{L^2(\Omega_f)}^2 \\
\le &  C \nu_fZ^{1}(\pdt \tw, \pdt \tu, \pdt \tla)+ C \nu_f\Xi_{1}^N(\pdt b_1, \pdt b_2, \pdt \eps_1, \pdt \eps_2)\\
&+C\nu_f\|\pdt\eps_2^{2}\|_{L^2(\Omega_f)}^2+ 2 \dt \sum_{n=0}^{N-1} \nu_f \| b_2^{n+1}\|_{L^2(\Omega_f)}^2.
\end{alignat*}
Finally, using the following estimate,
\begin{alignat*}{1}
    \dt \sum_{n=0}^{N-1} \nu_f \|\p_y^2 \tu^{n+1}\|_{L^2(\Omega_f)}^2 \le    2 \dt \sum_{n=0}^{N-1} \nu_f \Big( \|\p_x^2 \tu^{n+1}\|_{L^2(\Omega_f)}^2+  \|\Delta \tu^{n+1}\|_{L^2(\Omega_f)}^2 \Big),  
\end{alignat*}
and combining the above estimates we obtain the result.
\end{proof}

Similarly, if we subtract the consecutive levels of \eqref{seq1x}-\eqref{seq2x} and divide them by $\dt$, we notice that $\pdt\p_xu^{n+1}$ and $\pdt\p_xw^{n+1}$ satisfy the same equations as \eqref{seq1x}-\eqref{seq2x} from $n\ge1$, and with initial conditions $\pdt\p_xu^{1}$ in $\Omega_f$ and $\pdt\p_xw^{1}$ in $\Omega_s$. Similar to Corollary \ref{cordt}, we have the following, 
\begin{corollary}\label{cordx}
Suppose that $\Sigma$ is perpendicular to the two sides of $ \Gamma_{N\!e}$ as  in Figure \ref{figure:domain}. Let $\tw, \tu$ solve \eqref{seq1} and \eqref{seq2}  and  assuming that $\pdt\p_{x} \eps_2^{n} \in \{ v \in H^1(\Omega_f): v=0 \text{ on } \p \Omega_f \backslash \Sigma\}  $ on $ \Gamma_{N\!e}^f$ then the following estimate holds
\begin{alignat*}{1}
&Z^N(\pdt\p_x \tw, \pdt\p_x \tu, \pdt\p_x \tla)+ \sum_{n=1}^{N-1} S^{n+1}(\pdt\p_x \tw, \pdt\p_x \tu, \pdt\p_x \tla) \\
&\le  C Z^{1}(\pdt \p_x\tw, \pdt \p_x\tu, \pdt \p_x\tla)+C \Xi_1^N(\pdt\p_x b_1, \pdt\p_x b_2, \pdt\p_x \eps_1,  \pdt\p_x\eps_2)+C\|\pdt\p_x\eps_2^{2}\|_{L^2(\Omega_f)}^2.
\end{alignat*}
\end{corollary}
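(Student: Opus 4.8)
The plan is to mirror, essentially verbatim, the proof of Corollary \ref{cordt}, but applied to the spatially differentiated system \eqref{seq1x}--\eqref{seq2x} in place of \eqref{seq1}--\eqref{seq2}. The starting observation, already recorded in the text preceding the statement, is that in the horizontal-interface configuration the triple $(\p_x \tw, \p_x \tu, \p_x \tla)$ solves a coupling problem of exactly the same structure as \eqref{seq1}--\eqref{seq2}: differentiating in $x$ commutes with $\pdt$ and with the Laplacians, the Neumann conditions on $\Gamma_{N\!e}$ become homogeneous Dirichlet conditions for $\p_x \tw$ and $\p_x \tu$ (since there $\p_x = \p_{\bn}$), the Dirichlet conditions on $\Gamma_D$ survive after differentiating along the horizontal edges, and the Robin conditions on $\Sigma$ transform into \eqref{eq:rrdxw}--\eqref{eq:rrdxu}. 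Thus $(\p_x \tw, \p_x \tu, \p_x \tla)$ falls under the pure-Dirichlet variant covered by Remark \ref{stabilityremark}, which is precisely what makes Corollary \ref{corx} available.

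Next I would take a further discrete time difference. Because the coefficients $\nu_s$, $\nu_f$, $\alpha$ are constant in time and every operator in \eqref{seq1x}--\eqref{seq2x} is linear, subtracting consecutive time levels and dividing by $\dt$ shows that $(\pdt\p_x \tw, \pdt\p_x \tu, \pdt\p_x \tla)$ satisfies the identical system, now valid for $n \ge 1$, with right-hand sides $\pdt\p_x b_1$, $\pdt\p_x b_2$, $\pdt\p_x \eps_1$, $\pdt\p_x \eps_2$ and with nonzero initial data $\pdt\p_x w^1$ in $\Omega_s$ and $\pdt\p_x u^1$ in $\Omega_f$. This is structurally identical to the situation treated in Corollary \ref{cordt}, where the once-time-differenced variables obey the system from $n \ge 1$ with nonzero data at level $n=1$.

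The estimate then follows by rerunning the argument of Lemma \ref{errorLemma1}, Lemma \ref{lemmasummation} and Theorem \ref{stabilitythm} for these variables, summing the identity \eqref{eq:errorlemma1} from $n=1$ to $N-1$. The nonzero initial level contributes the term $C Z^1(\pdt\p_x \tw, \pdt\p_x \tu, \pdt\p_x \tla)$, while the summation-by-parts step of Lemma \ref{lemmasummation}, started at $n=1$ rather than $n=0$, leaves the single boundary contribution $-(\pdt\p_x \tu^1, \pdt\p_x \eps_2^2)_f$; this is absorbed through Young's inequality into $Z^1$ plus the term $C\|\pdt\p_x \eps_2^2\|_{L^2(\Omega_f)}^2$, exactly as in the proof of Corollary \ref{cordt}. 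The hypothesis $\pdt\p_x \eps_2^n \in V_f$ (vanishing on $\p\Omega_f \backslash \Sigma$) is what legitimizes using $\pdt\p_x \eps_2^{n+1}$ as a test function in that lemma.

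The only genuinely new verification, and the place I would be most careful, is the first step: confirming that the boundary data of the $x$-differentiated problem are the correct homogeneous Dirichlet and transformed Robin conditions. This hinges entirely on $\Sigma$ being perpendicular to $\Gamma_{N\!e}$ and parallel to the $x$-axis, so that $\p_x$ coincides with $\p_{\bn}$ on the Neumann edges and commutes cleanly with the normal trace on $\Sigma$. Once this is in hand, no new analysis is required: the remaining estimates are word-for-word those of Corollary \ref{cordt} with $(\pdt \tw, \pdt \tu, \pdt \tla)$ replaced by $(\pdt\p_x \tw, \pdt\p_x \tu, \pdt\p_x \tla)$ and all data replaced by their $\p_x$-derivatives.
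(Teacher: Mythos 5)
Your proposal is correct and follows essentially the same route as the paper: the paper likewise observes that subtracting consecutive levels of \eqref{seq1x}--\eqref{seq2x} and dividing by $\dt$ shows $(\pdt\p_x \tw, \pdt\p_x \tu, \pdt\p_x \tla)$ satisfies the same system for $n\ge 1$ with data $\pdt\p_x b_i$, $\pdt\p_x \eps_i$ and initial level $n=1$, and then repeats the argument of Corollary \ref{cordt} (summing the identity of Lemma \ref{errorLemma1} from $n=1$, handling the summation-by-parts boundary term $-(\pdt\p_x\tu^1,\pdt\p_x\eps_2^2)_f$ via Young's inequality to produce the $Z^1$ and $\|\pdt\p_x\eps_2^2\|_{L^2(\Omega_f)}^2$ terms). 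Your extra care about the boundary conditions of the $x$-differentiated problem matches the paper's remark on why the horizontal-interface assumption is essential.
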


We then have the following corollary similar to Corollary \ref{corH2}.
\begin{corollary}\label{cordtx}
Under the hypothesis of Corollary \ref{cordx} we have 
\begin{alignat*}{1}
    &\dt \sum_{n=1}^{N-1} \nu_f \|D^2 (\pdt\tu^{n+1})\|_{L^2(\Omega_f)}^2 \\
    \le  & C \Big( \Xi_1^N(\pdt\p_x b_1, \pdt\p_x b_2, \pdt\p_x \eps_1,  \pdt\p_x\eps_2)+\nu_f \Xi_2^N(\pdt^2 b_1, \pdt^2 b_2, \pdt^2 \eps_1, \pdt^2 \eps_2)\Big) \\
&+ C\|\pdt\p_x\eps_2^{2}\|_{L^2(\Omega_f)}^2+ C Z^{1}(\pdt \p_x\tw, \pdt \p_x\tu, \pdt \p_x\tla)+C\nu_fZ^{2}(\pdt^2 \tw, \pdt^2 \tu, \pdt^2 \tla)\\
&+C\nu_f\|\pdt^2\eps_2^{3}\|_{L^2(\Omega_f)}^2+C  \dt \sum_{n=1}^{N-1} \nu_f \| \pdt b_2^{n+1}\|_{L^2(\Omega_f)}^2.
\end{alignat*}
\end{corollary}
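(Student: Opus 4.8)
The plan is to follow the proof of Corollary~\ref{corH2} verbatim, but applied to the time-differenced fields $\pdt\tu$, $\pdt\tw$, $\pdt\tla$ in place of $\tu$, $\tw$, $\tla$. As in that proof, the starting point is the pointwise-in-time algebraic bound on the full Hessian,
\begin{equation*}
\|D^2 v\|_{L^2(\Omega_f)}^2 \le C\big( \|\nabla\p_x v\|_{L^2(\Omega_f)}^2 + \|\Delta v\|_{L^2(\Omega_f)}^2\big),
\end{equation*}
which follows from $\p_y^2 v = \Delta v - \p_x^2 v$ together with $\|\p_x^2 v\|\le\|\nabla\p_x v\|$; I would apply this with $v=\pdt\tu^{n+1}$ and sum against $\dt\nu_f$. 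The task then reduces to controlling $\dt\sum_{n=1}^{N-1}\nu_f\|\nabla\pdt\p_x\tu^{n+1}\|_{L^2(\Omega_f)}^2$ and $\dt\sum_{n=1}^{N-1}\nu_f\|\Delta\pdt\tu^{n+1}\|_{L^2(\Omega_f)}^2$ separately.

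For the first (tangential) term, I would invoke Corollary~\ref{cordx}: since $\nabla\pdt\p_x\tu^{n+1}=\nabla\p_x\pdt\tu^{n+1}$ and the dissipation functional $S^{n+1}(\pdt\p_x\tw,\pdt\p_x\tu,\pdt\p_x\tla)$ contains the summand $\dt\nu_f\|\nabla\pdt\p_x\tu^{n+1}\|^2$, the sum $\sum_{n=1}^{N-1}S^{n+1}(\pdt\p_x\tw,\pdt\p_x\tu,\pdt\p_x\tla)$ dominates $\dt\sum_{n=1}^{N-1}\nu_f\|\nabla\pdt\p_x\tu^{n+1}\|^2$, and Corollary~\ref{cordx} bounds it by exactly the terms $\Xi_1^N(\pdt\p_x b_1,\ldots)$, $\|\pdt\p_x\eps_2^2\|^2$, and $Z^1(\pdt\p_x\tw,\pdt\p_x\tu,\pdt\p_x\tla)$ appearing on the right-hand side. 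For the second (Laplacian) term, I would difference the interior equation \eqref{seq2.1} in time to get $\nu_f\Delta\pdt\tu^{n+1}=\pdt^2\tu^{n+1}-\pdt b_2^{n+1}$ for $n\ge1$, whence
\begin{equation*}
\dt\sum_{n=1}^{N-1}\nu_f\|\Delta\pdt\tu^{n+1}\|^2 \le 2\dt\sum_{n=1}^{N-1}\nu_f\|\pdt^2\tu^{n+1}\|^2 + 2\dt\sum_{n=1}^{N-1}\nu_f\|\pdt b_2^{n+1}\|^2,
\end{equation*}
the last sum being one of the stated right-hand side terms. The remaining sum $\dt\sum_{n=1}^{N-1}\nu_f\|\pdt^2\tu^{n+1}\|^2$ I would control through Corollary~\ref{cordtt}, whose left-hand side dominates $\dt\sum_{n=2}^{N-1}\nu_f\|\nabla\pdt^2\tu^{n+1}\|^2$ and the data term $Z^2(\pdt^2\tw,\pdt^2\tu,\pdt^2\tla)$.

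The main obstacle, and the place where care is needed, is bridging the gap between the $L^2(\Omega_f)$ norms I must estimate and the gradient norms that the functionals $S^{n+1}$ actually supply, together with the mismatch of summation ranges. Because $\pdt^2\tu^{n+1}$ vanishes on $\Gamma_D^f$, the Poincar\'e inequality gives $\|\pdt^2\tu^{n+1}\|_{L^2(\Omega_f)}^2\le C\|\nabla\pdt^2\tu^{n+1}\|_{L^2(\Omega_f)}^2$, so the tail $n\ge2$ is absorbed into $\sum_{n=2}^{N-1}S^{n+1}(\pdt^2\tw,\pdt^2\tu,\pdt^2\tla)$ and hence into $\nu_f\Xi_2^N(\pdt^2 b_1,\ldots)$ via Corollary~\ref{cordtt}; the single boundary term $n=1$, namely $\dt\nu_f\|\pdt^2\tu^2\|^2$, is instead absorbed (up to the harmless factor $\dt\le T$) into $\nu_f Z^2(\pdt^2\tw,\pdt^2\tu,\pdt^2\tla)$, whose definition contains $\tfrac12\|\pdt^2\tu^2\|_{L^2(\Omega_f)}^2$. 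Collecting all contributions and relabeling constants then yields the claimed bound; the only bookkeeping to watch is that every term produced lands among $\Xi_1^N(\pdt\p_x b_1,\ldots)$, $\nu_f\Xi_2^N(\pdt^2 b_1,\ldots)$, $\|\pdt\p_x\eps_2^2\|^2$, $Z^1(\pdt\p_x\tw,\ldots)$, $\nu_f Z^2(\pdt^2\tw,\ldots)$, $\nu_f\|\pdt^2\eps_2^3\|^2$, and $\dt\sum\nu_f\|\pdt b_2^{n+1}\|^2$, which it does.
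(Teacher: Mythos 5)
Your decomposition and the lemmas you invoke are exactly those of the paper's proof: the Hessian is split via $\p_y^2=\Delta-\p_x^2$ together with $\|\p_x^2 v\|\le\|\nabla\p_x v\|$; the tangential sum $\dt\sum_n\nu_f\|\nabla\p_x\pdt\tu^{n+1}\|^2$ is absorbed into $\sum_n S^{n+1}(\pdt\p_x\tw,\pdt\p_x\tu,\pdt\p_x\tla)$ and bounded by Corollary \ref{cordx}; the Laplacian sum is rewritten through the time-differenced interior equation $\nu_f\Delta\pdt\tu^{n+1}=\pdt^2\tu^{n+1}-\pdt b_2^{n+1}$ and handled by Corollary \ref{cordtt}. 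Structurally this is the paper's argument.

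There is, however, one real discrepancy, located in your absorption of $\dt\sum_{n\ge2}\nu_f\|\pdt^2\tu^{n+1}\|_{L^2(\Omega_f)}^2$. Using Poincar\'e and the summand $\dt\,\nu_f\|\nabla\pdt^2\tu^{n+1}\|_{L^2(\Omega_f)}^2$ of $S^{n+1}$, your route gives
\begin{equation*}
\dt\sum_{n=2}^{N-1}\nu_f\|\pdt^2\tu^{n+1}\|_{L^2(\Omega_f)}^2\le C\sum_{n=2}^{N-1}S^{n+1}(\pdt^2\tw,\pdt^2\tu,\pdt^2\tla)\le C\Big(Z^{2}(\pdt^2\tw,\pdt^2\tu,\pdt^2\tla)+\Xi_2^N+\|\pdt^2\eps_2^{3}\|_{L^2(\Omega_f)}^2\Big),
\end{equation*}
with \emph{no} factor $\nu_f$ on the data terms, because the weight $\nu_f$ is already consumed inside $S^{n+1}$ and Poincar\'e costs only an absolute constant. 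The corollary, by contrast, asserts the bound with $\nu_f\Xi_2^N$, $\nu_f Z^{2}$ and $\nu_f\|\pdt^2\eps_2^{3}\|^2$. Your bound dominates these only when $\nu_f\gtrsim 1$; for small $\nu_f$ the stated inequality is strictly stronger, so your closing claim that every produced term ``lands among'' the listed ones is not accurate, and the same mismatch would already occur if you ran your argument on Corollary \ref{corH2}. To land exactly on the statement, factor $\nu_f$ out of the entire Laplacian sum first and control $\|\pdt^2\tu^{n+1}\|_{L^2(\Omega_f)}^2$ pointwise in time by the $Z$-part, rather than the $S$-part, of the stability estimate: Corollary \ref{cordtt} holds for every terminal index, so applied with final time $t_{n+1}$ it gives $\tfrac12\|\pdt^2\tu^{n+1}\|_{L^2(\Omega_f)}^2\le Z^{n+1}(\pdt^2\tw,\pdt^2\tu,\pdt^2\tla)\le C\big(Z^{2}+\Xi_2^{n+1}+\|\pdt^2\eps_2^{3}\|^2\big)$; multiplying by $\dt$, summing over $n$ (at the cost of a factor $T=N\dt$, and of identifying $\Xi_2^{n+1}$ with $\Xi_2^N$ up to the final $\eps_2$-term, a point the paper itself glosses over), and restoring the factored-out $\nu_f$ produces precisely $C\nu_f\big(Z^{2}+\Xi_2^N+\|\pdt^2\eps_2^{3}\|^2\big)$, which is what the $\nu_f$-prefactors in the paper's intermediate display encode. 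Your handling of the $n=1$ term through $\nu_fZ^{2}$, of the tangential part, and of the $\pdt b_2$ term is consistent with the statement.
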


\begin{proof}
  From Corollary \ref{cordx} we get
\begin{alignat*}{1}
    &\dt \sum_{n=1}^{N-1} \nu_f \|\nabla \p_x (\pdt\tu^{n+1})\|_{L^2(\Omega_f)}^2\\
     &\le     C Z^{1}(\pdt \p_x\tw, \pdt \p_x\tu, \pdt \p_x\tla)+C \Xi_1^N(\pdt\p_x b_1, \pdt\p_x b_2, \pdt\p_x \eps_1,  \pdt\p_x\eps_2)+C\|\pdt\p_x\eps_2^{2}\|_{L^2(\Omega_f)}^2.
\end{alignat*}

Moreover,  using the equation about $\pdt\p_x u^{n+1}$ and Corollary \ref{cordtt}, we have
\begin{alignat*}{1}
    \dt \sum_{n=1}^{N-1} \nu_f \|\Delta (\pdt\tu^{n+1})\|_{L^2(\Omega_f)}^2 = &  \dt \sum_{n=1}^{N-1} \nu_f \|\pdt^2\tu^{n+1}- \pdt b_2^{n+1}\|_{L^2(\Omega_f)}^2 \\
\le &   C\nu_fZ^{2}(\pdt^2 \tw, \pdt^2 \tu, \pdt^2 \tla) + C \nu_f\Xi_2^N(\pdt^2 b_1, \pdt^2 b_2, \pdt^2 \eps_1, \pdt^2 \eps_2)\\
&+C\nu_f\|\pdt^2\eps_2^{3}\|_{L^2(\Omega_f)}^2+ 2 \dt \sum_{n=1}^{N-1} \nu_f \| \pdt b_2^{n+1}\|_{L^2(\Omega_f)}^2. 
\end{alignat*}
Similar to Corollary \ref{corH2}, we finish the proof.
\end{proof}

\section{Error estimates of the Robin-Robin method}\label{sec:errorest}
In this section we apply the stability results of the previous sections to obtain error estimates of the Robin-Robin splitting method  \eqref{eq1}-\eqref{eq2} applied to \eqref{eq:ppinterface}. We use the following notation for the errors :  
\begin{alignat*}{1}
U^n:=& \uu^n-u^n, \quad W^n:=\ww^n-w^n, \quad \Lambda^{n}:=\lla^n-\lambda^n. 
\end{alignat*}

Then the error equations read, for $n\ge0$,

\begin{subequations}\label{erroreq1}
\begin{alignat}{2}
\pdt W^{n+1}- \nu_s \Delta W^{n+1}=&-h_1^{n+1}, \quad && \text{ in }  \Omega_s, \\
W^{n+1}=&0 , \quad &&  \text{ on } \Gamma^s_D, \\
\p_{\bn} W^{n+1}=&0, \quad &&  \text{ on }   \Gamma_{N\!e}^s,\\
\alpha  W^{n+1}+ \nu_s \p_{\bn_s} W^{n+1} = &  \alpha  U^{n} -\nu_f \p_{\bn_f} U^n + \alpha g_1^{n+1}-g_2^{n+1}  \quad  && \text{ on }     \Sigma.
\end{alignat}
\end{subequations}

\begin{subequations}\label{erroreq2}
\begin{alignat}{2}
\pdt U^{n+1}- \nu_f\Delta U^{n+1}=& -h_2^{n+1}, \quad && \text{ in } \Omega_f,   \\
U^{n+1}=&0 , \quad &&  \text{ on }  \Gamma^f_D, \\
\p_{\bn} U^{n+1}=&0, \quad &&  \text{ on }    \Gamma_{N\!e}^f,\\
\alpha U^{n+1}+ \nu_f \p_{\bn_f} U^{n+1} = & \alpha W^{n+1} + \nu_f \p_{\bn_f} U^n+ g_2^{n+1}   \quad && \text{ on }     \Sigma.
\end{alignat}
\end{subequations}
where 
\begin{alignat*}{2}
h_1^{n+1} &:= \pt \ww^{n+1} - \pdt \ww^{n+1}, \quad && g_1^{n+1} := \uu^{n+1}-\uu^n, \\
h_2^{n+1} &:= \pt \uu^{n+1} - \pdt \uu^{n+1}, \quad && g_2^{n+1} := \lla^{n+1} - \lla^n.
\end{alignat*}

The equations \eqref{erroreq1}-\eqref{erroreq2} are well-defined and we also have 
\begin{equation}\label{eq:wuinit}
W^0=U^0=0.   
\end{equation}

  The following assumptions are useful in the analysis (cf. \cite{aggul2018defect}):
\begin{assumption}\label{assump:initial}
\begin{equation}
 \|\partial_t\uu(0)\|_{L^2(\Sigma)}\le C(\dt)^\frac12,\quad\|\partial_t\lla(0)\|_{L^2(\Sigma)}\le C(\dt)^\frac12.
\end{equation}
\end{assumption}

\begin{assumption}\label{assump:initial1}
\begin{alignat}{2}
&\|\partial_t\uu(0)\|_{L^2(\Sigma)}\le C(\dt)^\frac32,&&\quad
\|\partial_t\lla(0)\|_{L^2(\Sigma)}\le C(\dt)^\frac32,\\
&\|\partial^2_t\uu(0)\|_{L^2(\Omega_f)}\le C\dt,&&\quad
\|\partial^2_t\ww(0)\|_{L^2(\Omega_s)}\le C\dt,\\
&\|\partial^2_t\uu(0)\|_{L^2(\Sigma)}\le C(\dt)^\frac12,&&\quad
\|\partial^2_t\lla(0)\|_{L^2(\Sigma)}\le C(\dt)^\frac12.
\end{alignat}
\end{assumption}

We then extend $\lla$ to $\Omega_f$ in a natural way. We let $\tilde{\lla}= \phi \nu_f\nabla \uu \cdot \bn_f$ where $\phi$ is a function that is one on $\Sigma$ and vanishes on $\Gamma_D^f$. Then, we define  $\tilde{g}_2^{n+1}=  \tilde{\lla}^{n+1} - \tilde{\lla}^n$. By construction $\tilde{g}_2^{n+1} \in V_f$ and $\tilde{g}_2^n= g_2^{n}$  on $\Sigma$.
We immediately get the following result if we apply Theorem \ref{stabilitythm}, Corollary \ref{cordt} and Corollary \ref{cordtt}. 
\begin{corollary}\label{errorestimate}
Let $\uu, \ww$ solve \eqref{eq:ppinterface} and $w, u$ solve \eqref{eq1} and \eqref{eq2}  then
\begin{alignat}{1}\label{eq:WUerror}
Z^N(W, U, \Lambda)+ \sum_{n=0}^{N-1} S^{n+1}(W, U, \Lambda) \le   C \Xi_0^N(h_1, h_2, \alpha g_1-g_2,\tilde{g}_2),
\end{alignat}
\begin{equation}\label{eq:WUdterror}
  \begin{aligned}
& Z^N(\pdt W, \pdt U, \pdt \Lambda)+ \sum_{n=1}^{N-1} S^{n+1}(\pdt W, \pdt U, \pdt \Lambda)  \\
&\le   CZ^1(\pdt W, \pdt U, \pdt \Lambda)+ C \Xi_1^N(\pdt h_1, \pdt h_2, \alpha \pdt g_1-\pdt g_2, \pdt \tilde{g}_2)+C\|\pdt \tilde{g}_2^{2}\|_{L^2(\Omega_f)}^2,
\end{aligned}
\end{equation}
and
\begin{equation}\label{eq:WUdt2error}
  \begin{aligned}
& Z^N(\pdt^2 W, \pdt^2 U, \pdt^2 \Lambda)+ \sum_{n=2}^{N-1} S^{n+1}(\pdt^2 W, \pdt^2 U, \pdt^2 \Lambda)  \\
&\le  C Z^2(\pdt^2 W, \pdt^2 U, \pdt^2 \Lambda)+ C \Xi_2^N(\pdt^2 h_1, \pdt^2 h_2, \alpha \pdt^2 g_1-\pdt^2 g_2, \pdt^2 \tilde{g}_2)+C\|\pdt^2\tilde{g}_2^{3}\|_{L^2(\Omega_f)}^2.
\end{aligned}
\end{equation}
\end{corollary}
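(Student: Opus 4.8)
The plan is to recognize that the error equations \eqref{erroreq1}--\eqref{erroreq2} are a special instance of the general Robin--Robin system \eqref{seq1}--\eqref{seq2}, so that the three desired bounds become direct applications of the stability results already proved. Comparing the two systems term by term, the interior sources are $b_1^{n+1}=-h_1^{n+1}$ and $b_2^{n+1}=-h_2^{n+1}$, and matching the Robin transmission conditions identifies the interface data as $\eps_1^{n+1}=\alpha g_1^{n+1}-g_2^{n+1}$ and $\eps_2^{n+1}=g_2^{n+1}$ on $\Sigma$. The initial conditions \eqref{eq:wuinit} give $W^0=U^0=0$, matching \eqref{eq:uwinitial}, so Theorem \ref{stabilitythm} applies with zero initial data. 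Since $\Xi_m^N$ only sees the squared norms of its arguments, the sign in $b_i=-h_i$ is immaterial and one may write $h_1,h_2$ directly in the bound.

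The one genuine subtlety is that the stability estimates require $\eps_2^{n+1}\in V_f$, whereas $g_2^{n+1}=\lla^{n+1}-\lla^n$ is a priori defined only on $\Sigma$. This is precisely the role of the extension constructed just before the statement: the solution $(W,U,\Lambda)$ depends on $\eps_2$ only through its trace on $\Sigma$, since $\eps_2$ enters \eqref{stweak3} and \eqref{errorStrong} solely through the boundary integral $\bl\eps_2^{n+1},\mu\br$. Hence I would replace $\eps_2^{n+1}$ by $\tilde{g}_2^{n+1}\in V_f$, which leaves the solution unchanged because $\tilde{g}_2^{n+1}=g_2^{n+1}$ on $\Sigma$. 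The interior values of $\tilde{g}_2$ then supply exactly the $\Omega_f$-norms appearing in $\Xi_m^N$, while on $\Sigma$ one has $\eps_1^{n+1}+\eps_2^{n+1}=\alpha g_1^{n+1}$, consistent with feeding the data as $\Xi_0^N(h_1,h_2,\alpha g_1-g_2,\tilde{g}_2)$. This yields \eqref{eq:WUerror} at once.

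For the two differenced bounds I would use that the system is linear with time-independent coefficients, so applying $\pdt$ and $\pdt^2$ to \eqref{erroreq1}--\eqref{erroreq2} shows that $(\pdt W,\pdt U,\pdt\Lambda)$ and $(\pdt^2 W,\pdt^2 U,\pdt^2\Lambda)$ solve the same system for $n\ge 1$ and $n\ge 2$, respectively, with the correspondingly differenced data $\pdt b_i,\pdt\eps_i$ and $\pdt^2 b_i,\pdt^2\eps_i$. Because the extension $\tilde{\lla}=\phi\,\nu_f\nabla\uu\cdot\bn_f$ is built with a fixed, time-independent cutoff $\phi$, the quantities $\pdt\tilde{g}_2$ and $\pdt^2\tilde{g}_2$ remain legitimate $V_f$-extensions of $\pdt g_2$ and $\pdt^2 g_2$, so the hypotheses $\pdt\eps_2\in V_f$ and $\pdt^2\eps_2\in V_f$ of Corollaries \ref{cordt} and \ref{cordtt} hold. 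Estimates \eqref{eq:WUdterror} and \eqref{eq:WUdt2error} then follow by applying those two corollaries directly. The main obstacle, and the only place demanding care, is the bookkeeping of the extension together with the nonzero initial contributions $Z^1$ and $Z^2$: unlike the base case, the differenced systems do not annihilate these (their initial data are $\pdt W^1,\pdt U^1$ and $\pdt^2 W^2,\pdt^2 U^2$), so they appear explicitly on the right-hand sides rather than vanishing.
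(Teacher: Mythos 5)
Your proposal is correct and follows exactly the paper's route: the paper obtains Corollary \ref{errorestimate} by instantiating Theorem \ref{stabilitythm}, Corollary \ref{cordt}, and Corollary \ref{cordtt} with $b_i=-h_i$, $\eps_1=\alpha g_1-g_2$, and $\eps_2=g_2$ replaced by its $V_f$-extension $\tilde{g}_2$, precisely as you describe. Your discussion of why the extension leaves the solution unchanged and why the $Z^1$, $Z^2$ terms survive in the differenced cases simply makes explicit what the paper leaves implicit.
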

Then, it is quite straightforward to get a convergence rate by estimating the right-hand sides. The proof of the following Corollaries can be found in Appendix \ref{appdix:initcoro} and Appendix \ref{appdix:fcoro}.

\begin{corollary}\label{coro:initzs}
  Let $\uu, \ww$ solve \eqref{eq:ppinterface} and $w, u$ solve \eqref{eq1} and \eqref{eq2}. Under Assumption \ref{assump:initial}, we have
\begin{alignat}{1}\label{eq:initz1}
Z^1(\pdt W, \pdt U, \pdt \Lambda)\le C(\dt)^2\mathsf{Y}_1.
\end{alignat}
and, under Assumption \ref{assump:initial1}, we have
\begin{alignat}{1}\label{eq:initz2}
Z^2(\pdt^2 W, \pdt^2 U, \pdt^2 \Lambda)\le C(\dt)^2\mathsf{Y}_2,
\end{alignat}
where $\mathsf{Y}_1$ and $\mathsf{Y}_2$ are defined in Appendix \ref{appdix:initcoro}.
\end{corollary}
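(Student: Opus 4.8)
The plan is to analyze the first one (resp.\ two) steps of the error scheme \eqref{erroreq1}--\eqref{erroreq2} directly, using that the initial errors vanish and that the discrete time derivatives there are governed by the single-step identity of Lemma \ref{errorLemma1}. The key structural point is that, on a single step, the flux term can be paired directly against the interface norm already carried by $Z$, so one never invokes the summation-by-parts of Lemma \ref{lemmasummation} nor the resulting volume term $\|\tilde g_2\|_{L^2(\Omega_f)}$, which would make a direct application of Theorem \ref{stabilitythm} too lossy at the initial step (it contributes $\frac{1}{\alpha^2}\|\tilde g_2^1\|_{L^2(\Omega_f)}^2=O((\dt)^2)$). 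Throughout, $\mathsf Y_1,\mathsf Y_2$ collect the relevant solution norms ($\|\pt^2\uu\|$, $\|\pt^2\ww\|$, and their traces on $\Sigma$).

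For \eqref{eq:initz1}, since $W^0=U^0=\Lambda^0=0$ (the last with the natural initialization $\lambda^0=\nu_f\p_{\bn_f}\uu_0$), we have $\pdt W^1=W^1/\dt$ and likewise for $U,\Lambda$, whence $Z^1(\pdt W,\pdt U,\pdt\Lambda)=(\dt)^{-2}Z^1(W,U,\Lambda)$; it thus suffices to prove $Z^1(W,U,\Lambda)\le C(\dt)^4\mathsf Y_1$. Applying Lemma \ref{errorLemma1} at $n=0$ with $Z^0=0$ gives $Z^1+S^1=\dt F^1(W,U)+\frac{\dt}{\alpha}\bl g_2^1,\Lambda^1\br$, and I would bound each term on the right by $\tfrac12(Z^1+S^1)$ plus a remainder, using Young's inequality with the trace and Poincar\'e inequalities to absorb the factors $W^1,U^1,\Lambda^1$; in particular $\frac{\dt}{\alpha}\bl g_2^1,\Lambda^1\br\le\frac{\dt}{4\alpha}\|\Lambda^1\|_{L^2(\Sigma)}^2+\frac{\dt}{\alpha}\|g_2^1\|_{L^2(\Sigma)}^2$ with the first term absorbed into $Z^1$. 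The consistency errors follow from Taylor expansion with integral remainder: $\|h_1^1\|_{L^2(\Omega_s)},\|h_2^1\|_{L^2(\Omega_f)}=O(\dt)$, while $g_1^1=\int_0^{\dt}\pt\uu$ and $g_2^1=\int_0^{\dt}\pt\lla$ give, under Assumption \ref{assump:initial}, $\|g_1^1\|_{L^2(\Sigma)},\|g_2^1\|_{L^2(\Sigma)}=O((\dt)^{3/2})$. Substituting yields $Z^1+S^1\le C(\dt)^4\mathsf Y_1$, which is \eqref{eq:initz1}.

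For \eqref{eq:initz2} I would work in two stages. First, rerun the previous argument under the stronger Assumption \ref{assump:initial1}: now $\|\pt^2\uu(0)\|_{L^2(\Omega_f)},\|\pt^2\ww(0)\|_{L^2(\Omega_s)}=O(\dt)$ sharpen the truncation errors to $\|h_1^1\|_{L^2(\Omega_s)},\|h_2^1\|_{L^2(\Omega_f)}=O((\dt)^2)$, and $\|\pt\uu(0)\|_{L^2(\Sigma)},\|\pt\lla(0)\|_{L^2(\Sigma)}=O((\dt)^{3/2})$ give $\|g_i^1\|_{L^2(\Sigma)}=O((\dt)^{5/2})$, so that $Z^1(W,U,\Lambda)+S^1(W,U,\Lambda)=O((\dt)^6)$; equivalently $Z^1(\pdt W,\pdt U,\pdt\Lambda)=O((\dt)^4)$ and $\|\nabla U^1\|_{L^2(\Omega_f)}^2=O((\dt)^5)$. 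Second, the first differences $(\pdt W,\pdt U,\pdt\Lambda)$ solve the error system for $n\ge1$ with differenced data and initial value $(\pdt W^1,\pdt U^1,\pdt\Lambda^1)$; applying Lemma \ref{errorLemma1} to this system at $n=1$ and absorbing as above—using the sharpened $Z^1(\pdt W,\pdt U,\pdt\Lambda)=O((\dt)^4)$ together with $\|\pdt h_i^2\|=O(\dt)$ and $\|\pdt g_i^2\|_{L^2(\Sigma)}=O((\dt)^{3/2})$ (again from Assumption \ref{assump:initial1})—I obtain $Z^2(\pdt W,\pdt U,\pdt\Lambda)+S^2(\pdt W,\pdt U,\pdt\Lambda)=O((\dt)^4)$.

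It remains to pass to $Z^2(\pdt^2 W,\pdt^2 U,\pdt^2\Lambda)$. The bulk terms are immediate, since $S^2(\pdt W,\pdt U,\pdt\Lambda)\ge\tfrac{(\dt)^2}{2}(\|\pdt^2 W^2\|_{L^2(\Omega_s)}^2+\|\pdt^2 U^2\|_{L^2(\Omega_f)}^2)$, so dividing by $(\dt)^2$ puts these at order $O((\dt)^2)$. The main obstacle—and where I expect the real work to lie—is the two interface terms $\tfrac{\dt\alpha}{2}\|\pdt^2 U^2\|_{L^2(\Sigma)}^2$ and $\tfrac{\dt}{2\alpha}\|\pdt^2\Lambda^2\|_{L^2(\Sigma)}^2$, because $S^2$ controls only the combination $\|\pdt^2 U^2+\tfrac1\alpha\pdt^2\Lambda^2\|_{L^2(\Sigma)}$ on $\Sigma$, not the two norms individually. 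For the first I would use the trace inequality $\|\pdt^2 U^2\|_{L^2(\Sigma)}\le C\|\pdt^2 U^2\|_{H^1(\Omega_f)}$ and bound $\|\nabla\pdt^2 U^2\|_{L^2(\Omega_f)}\le\frac1\dt(\|\nabla\pdt U^2\|+\|\nabla\pdt U^1\|)=O((\dt)^{1/2})$, the two gradients coming respectively from $S^2=O((\dt)^4)$ and the Stage-one bound $\|\nabla U^1\|^2=O((\dt)^5)$; this gives $\|\pdt^2 U^2\|_{L^2(\Sigma)}^2=O(\dt)$ and hence the $U$-interface term at order $O((\dt)^2)$. The multiplier term then follows from the triangle inequality $\tfrac1\alpha\|\pdt^2\Lambda^2\|_{L^2(\Sigma)}\le\|\pdt^2 U^2+\tfrac1\alpha\pdt^2\Lambda^2\|_{L^2(\Sigma)}+\|\pdt^2 U^2\|_{L^2(\Sigma)}$, both summands being $O((\dt)^{1/2})$. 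Collecting the bulk and interface estimates proves \eqref{eq:initz2}.
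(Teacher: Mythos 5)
Your proof of \eqref{eq:initz1} is essentially the paper's own argument: apply the one-step identity of Lemma \ref{errorLemma1} at $n=0$ with $Z^0=0$, bound the flux term directly on $\Sigma$ so it is absorbed by $Z^1$ (never invoking Lemma \ref{lemmasummation}, exactly as the paper does at the initial step), and Taylor-expand the data under Assumption \ref{assump:initial} to get $Z^1(W,U,\Lambda)+S^1(W,U,\Lambda)\le C(\dt)^4\mathsf{Y}_1$, then divide by $(\dt)^2$.

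Your proof of \eqref{eq:initz2} is, as far as I can check, correct, but it takes a genuinely different route. The paper forms the rescaled variables $\mathfrak{W}^2=(W^2-2W^1)/\dt$, $\mathfrak{U}^2$, $\mathfrak{L}^2$ together with first-level values $\mathfrak{W}^1=W^1/\dt$, etc., shows (by subtracting $1/\dt$ times the $t_1$-equations from the differenced $t_2$-equations) that these satisfy a single Robin--Robin step with data $(h_1^2-2h_1^1)/\dt,\dots$, and applies Lemma \ref{errorLemma1} once; since $Z^2(\mathfrak{W},\mathfrak{U},\mathfrak{L})=(\dt)^2Z^2(\pdt^2 W,\pdt^2 U,\pdt^2\Lambda)$ and $Z^1(\mathfrak{W},\mathfrak{U},\mathfrak{L})=Z^1(\pdt W,\pdt U,\pdt\Lambda)$, the identity controls all four components of $Z^2(\pdt^2 W,\pdt^2 U,\pdt^2\Lambda)$ at once, including both interface norms, with no trace-inequality splitting. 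You instead run the identity on the honest first-difference system to obtain $Z^2(\pdt W,\pdt U,\pdt\Lambda)+S^2(\pdt W,\pdt U,\pdt\Lambda)=O((\dt)^4)$ and then post-process: bulk terms from the difference part of $S^2$, the $U$-interface term via the trace of $\pdt^2U^2$ with $\|\nabla\pdt^2U^2\|\le\dt^{-1}(\|\nabla\pdt U^2\|+\|\nabla\pdt U^1\|)$, and the multiplier term by the triangle inequality against the combination $\|\pdt^2U^2+\tfrac1\alpha\pdt^2\Lambda^2\|_{L^2(\Sigma)}$ that $S^2$ controls. The exponent bookkeeping closes: under Assumption \ref{assump:initial1} your Stage one gives $\|h_i^1\|=O((\dt)^2)$, $\|g_i^1\|_{L^2(\Sigma)}=O((\dt)^{5/2})$, hence $Z^1+S^1=O((\dt)^6)$ and $\|\nabla U^1\|^2=O((\dt)^5)$; your differenced data satisfy $\|\pdt h_i^2\|=O(\dt)$ and $\|\pdt g_i^2\|_{L^2(\Sigma)}=O((\dt)^{3/2})$ (since $\pdt g_1^2=\dt\,\pdt^2\uu^2$, $\pdt g_2^2=\dt\,\pdt^2\lla^2$); and the recovery yields $\|\pdt^2U^2\|_{L^2(\Sigma)}^2=O(\dt)$, so each interface term is $O((\dt)^2)$. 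What the paper's construction buys is direct interface control and a constant that is literally the $\mathsf{Y}_2$ of Appendix \ref{appdix:initcoro}; what your route buys is cleaner data (only true differences $\pdt h_i^2$, $\pdt g_i^2$ appear, never $(h_1^2-2h_1^1)/\dt$) at the price of a trace constant and factors such as $\alpha/\nu_f$ entering your final constant, which is therefore of the same type as, but not identical to, $\mathsf{Y}_2$. Two points you should make explicit in a write-up: (i) Lemma \ref{errorLemma1} is a one-step identity whose proof never uses the zero initial data, so it legitimately applies to the differenced system with nonzero first level (the paper relies on the same fact in Corollary \ref{cordt}); (ii) in $\dt F^2$ the term $\dt\bl\pdt U^2-\pdt U^1,\pdt g_2^2\br$ needs either your Stage-one gradient bound for $\pdt U^1$ or the bound $\|\pdt U^1\|_{L^2(\Sigma)}^2\le\tfrac{2}{\alpha\dt}Z^1(\pdt W,\pdt U,\pdt\Lambda)=O((\dt)^3)$; the phrase ``absorbing as above'' glosses over this step.
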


\begin{corollary}\label{Corrate}
Let $\uu, \ww$ solve \eqref{eq:ppinterface} and $w, u$ solve \eqref{eq1} and \eqref{eq2}  then
\begin{alignat}{1}\label{eq:errorW}
Z^N(W, U, \Lambda)+ \sum_{n=0}^{N-1} S^{n+1}(W, U, \Lambda) \le  C (\dt)^2 \mathsf{Y},
\end{alignat}
\begin{alignat}{1}\label{eq:errordW}
& Z^N(\pdt W, \pdt U, \pdt \Lambda)+ \sum_{n=1}^{N-1} S^{n+1}(\pdt W, \pdt U, \pdt \Lambda) \le C (\dt)^2 (\mathcal{Y}+\mathsf{Y}_1),
\end{alignat}
and
\begin{alignat}{1}\label{eq:errorddW}
& Z^N(\pdt^2 W, \pdt^2 U, \pdt^2 \Lambda)+ \sum_{n=2}^{N-1} S^{n+1}(\pdt^2 W, \pdt^2 U, \pdt^2 \Lambda) \le C (\dt)^2 (\mathfrak{Y}+\mathsf{Y}_2),
\end{alignat}
where $\mathsf{Y}$ is defined as
\begin{equation}
  \begin{aligned}
\mathsf{Y}:=&  \frac{1}{\nu_s}\|\pt^2 \ww\|_{L^2(0,T;L^2(\Omega_s))}^2 + (\frac{1}{\nu_f}+\frac{1}{\alpha})\|\pt^2 \uu\|_{L^2(0,T;L^2(\Omega_f))}^2 \\
& + (\frac{\nu_f}{\alpha^2}+\frac{\nu_f^2}{\alpha})  \|\pt \uu\|_{L^2(0,T;H^1(\Omega_f))}^2+ \frac{(\nu_f)^3}{\alpha^2}  \|\pt \uu\|_{L^2(0,T;H^2(\Omega_f))}^2\\
& +\frac{\alpha^2}{\nu_s}\|\pt \uu\|_{L^2(0,T;L^2(\Sigma))}^2 + \frac{1}{\nu_f}\|\pt \lla\|_{L^2(0,T;L^2(\Sigma))}^2+\frac{\nu_f^2}{\alpha^2}  \|\pt \uu\|_{L^\infty(0,T;H^1(\Omega_f))}^2,
\end{aligned}
\end{equation}
$\mathcal{Y}$ is defined as
\begin{equation}\label{eq:caly}
  \begin{aligned}
\mathcal{Y}:=&  \frac{1}{\nu_s}\|\pt^3 \ww\|_{L^2(0,T;L^2(\Omega_s))}^2 + (\frac{1}{\nu_f}+\frac{1}{\alpha})\|\pt^3 \uu\|_{L^2(0,T;L^2(\Omega_f))}^2 \\
& + (\frac{\nu_f}{\alpha^2}+\frac{\nu_f^2}{\alpha})  \|\pt^2 \uu\|_{L^2(0,T;H^1(\Omega_f))}^2+ \frac{(\nu_f)^3}{\alpha^2}  \|\pt^2 \uu\|_{L^2(0,T;H^2(\Omega_f))}^2\\
& +\frac{\alpha^2}{\nu_s}\|\pt^2 \uu\|_{L^2(0,T;L^2(\Sigma))}^2 + \frac{1}{\nu_f}\|\pt^2 \lla\|_{L^2(0,T;L^2(\Sigma))}^2+(\frac{\nu_f^2}{\alpha^2}+\nu_f^2)  \|\pt^2 \uu\|_{L^\infty(0,T;H^1(\Omega_f))}^2,
\end{aligned}
\end{equation}
and $\mathfrak{Y}$ is defined as
\begin{equation}\label{eq:frakY}
\begin{aligned}
 \mathfrak{Y}:=&  \frac{1}{\nu_s}\|\pt^4 \ww\|_{L^2(0,T;L^2(\Omega_s))}^2 + (\frac{1}{\nu_f}+\frac{1}{\alpha})\|\pt^4 \uu\|_{L^2(0,T;L^2(\Omega_f))}^2 \\
& + (\frac{\nu_f}{\alpha^2}+\frac{\nu_f^2}{\alpha})  \|\pt^3 \uu\|_{L^2(0,T;H^1(\Omega_f))}^2+ \frac{(\nu_f)^3}{\alpha^2}  \|\pt^3 \uu\|_{L^2(0,T;H^2(\Omega_f))}^2\\
& +\frac{\alpha^2}{\nu_s}\|\pt^3 \uu\|_{L^2(0,T;L^2(\Sigma))}^2 + \frac{1}{\nu_f}\|\pt^3 \lla\|_{L^2(0,T;L^2(\Sigma))}^2+\frac{\nu_f^2}{\alpha^2}  \|\pt^3 \uu\|_{L^\infty(0,T;H^1(\Omega_f))}^2.
\end{aligned}  
\end{equation}
\end{corollary}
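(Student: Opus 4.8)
The plan is to treat Corollary \ref{Corrate} purely as the task of estimating the right-hand sides of the three abstract bounds already supplied by Corollary \ref{errorestimate}, combined with the initial-layer bounds of Corollary \ref{coro:initzs}. Indeed, \eqref{eq:WUerror}--\eqref{eq:WUdt2error} reduce the three claimed inequalities to controlling $\Xi_0^N(h_1,h_2,\alpha g_1-g_2,\tilde g_2)$, $\Xi_1^N(\pdt h_1,\ldots)$ and $\Xi_2^N(\pdt^2 h_1,\ldots)$ by $(\dt)^2$ times the continuous norms collected in $\mathsf Y$, $\mathcal Y$, $\mathfrak Y$; the surviving $Z^1$ and $Z^2$ terms are exactly what Corollary \ref{coro:initzs} bounds by $C(\dt)^2\mathsf Y_1$ and $C(\dt)^2\mathsf Y_2$, and the stray terms $C\|\pdt\tilde g_2^2\|^2$ and $C\|\pdt^2\tilde g_2^3\|^2$ are initial-layer contributions of the same type. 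So the whole argument is the systematic conversion of discrete residuals into powers of $\dt$.

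First I would record the two elementary integral-remainder estimates that drive everything. For the backward-Euler truncation residuals $h_i^{n+1}=\pt v^{n+1}-\pdt v^{n+1}$ one has $\|h_i^{n+1}\|^2\le \tfrac{\dt}{3}\int_{t_n}^{t_{n+1}}\|\pt^2 v\|^2\,dt$, whence $\dt\sum_n\|h_i^{n+1}\|^2\le C(\dt)^2\|\pt^2 v\|_{L^2(0,T;L^2)}^2$; applying $\pdt$ or $\pdt^2$ raises the order of the time derivative by one or two, producing the $\pt^3$ and $\pt^4$ norms seen in $\mathcal Y$ and $\mathfrak Y$. For the increment residuals $g_1^{n+1}=\uu^{n+1}-\uu^n$, $g_2^{n+1}=\lla^{n+1}-\lla^n$ and $\tilde g_2^{n+1}=\tilde\lla^{n+1}-\tilde\lla^n$, writing a generic increment as $\int_{t_n}^{t_{n+1}}\pt v\,dt$ and applying Cauchy--Schwarz gains the factor $\dt$, i.e. $\dt\sum_n\|v^{n+1}-v^n\|^2\le C(\dt)^2\|\pt v\|_{L^2(0,T;L^2)}^2$, with each additional $\pdt$ again costing one time derivative. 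Since $\tilde\lla=\phi\,\nu_f\nabla\uu\cdot\bn_f$ is the flux extension, the value of $\tilde g_2$ carries $\nu_f\|\pt\uu\|_{H^1}$ while its gradient carries $\nu_f\|\pt\uu\|_{H^2}$, which is what fixes the powers $\tfrac{\nu_f^2}{\alpha}$ and $\tfrac{\nu_f^3}{\alpha^2}$ attached to the $H^1$- and $H^2$-norms in $\mathsf Y$.

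The one genuinely useful simplification is on the interface. Because $\tilde\lla=\lla$ on $\Sigma$, we have $\tilde g_2^{n+1}=g_2^{n+1}$ there, so in the $\Sigma$-part of $\Xi$ the combination $(\alpha g_1-g_2)^{n+1}+\tilde g_2^{n+1}$ collapses to $\alpha g_1^{n+1}$; the surface terms then reduce to $\tfrac{\alpha^2}{\nu_s}\|g_1\|_{L^2(\Sigma)}^2$ and $\tfrac1{\nu_f}\|g_2\|_{L^2(\Sigma)}^2$, which after the increment estimate give precisely the $\tfrac{\alpha^2}{\nu_s}\|\pt\uu\|_{L^2(L^2(\Sigma))}^2$ and $\tfrac1{\nu_f}\|\pt\lla\|_{L^2(L^2(\Sigma))}^2$ contributions. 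Finally the endpoint term $\tfrac1{\alpha^2}\|\tilde g_2^N\|^2$ is bounded by pulling out an $L^\infty$-in-time norm, yielding the $\tfrac{\nu_f^2}{\alpha^2}\|\pt\uu\|_{L^\infty(H^1)}^2$ term. Assembling all of these with the correct $\nu_f,\nu_s,\alpha$ weights gives \eqref{eq:errorW}, and repeating the same computation with $\pdt$- and $\pdt^2$-differenced data (invoking Corollary \ref{coro:initzs} for the initial-layer pieces) gives \eqref{eq:errordW} and \eqref{eq:errorddW}.

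I expect the main obstacle to be neither conceptual nor the individual estimates, but the bookkeeping: one must match every residual, after differencing, to the exactly-weighted norm appearing in $\mathsf Y$, $\mathcal Y$, $\mathfrak Y$, keeping track of how many time derivatives each $\pdt$ costs and how the factors of $\nu_f$ enter through the flux extension $\tilde\lla$. The most delicate single term is the volume contribution $\tfrac1{\nu_f\alpha^2}\|\pdt\tilde g_2^{n+1}\|^2$ coming from the integration by parts in Lemma \ref{lemmasummation}: since $\pdt\tilde g_2$ is a \emph{second} difference of $\tilde\lla$, extracting the $(\dt)^2$ smallness here requires the second-order remainder estimate rather than the plain increment bound, and this is the step where the derivative count must be handled with care.
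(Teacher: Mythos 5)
Your proposal is correct and follows essentially the same route as the paper's own proof in Appendix B: reduce to Corollary \ref{errorestimate} plus the initial-layer bounds of Corollary \ref{coro:initzs}, collapse the interface combination $\alpha g_1-g_2+\tilde g_2$ to $\alpha g_1$ using $\tilde g_2=g_2$ on $\Sigma$, and convert each residual into powers of $\dt$ via the integral-remainder estimates \eqref{721}--\eqref{eq:linftyesti}, \eqref{621} and \eqref{v3d}, with the flux extension $\tilde\lla=\phi\,\nu_f\nabla\uu\cdot\bn_f$ supplying the $\nu_f$-weighted $H^1$ and $H^2$ norms. You also correctly single out the same delicate point the paper handles separately (its terms $T_3$ and $R_3$): the volume term $\tfrac{1}{\nu_f\alpha^2}\|\pdt\tilde g_2^{n+1}\|^2$ is a second (respectively third) difference of $\tilde\lla$ and needs the higher-order remainder estimate rather than the plain increment bound.
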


\subsection{$H^2$ error estimates in a special case}
Here we assume that we have the configuration as in Figure \ref{figure:neumann}. Then, we see that $\partial_x \tilde{g_2}^{n+1}$ vanishes on $ \Gamma_{N\!e}^f$ and $\Gamma_D^f$ and hence belongs to $V_f$. Hence, Corollary \ref{corH2} gives the following corollary. 

\begin{corollary}\label{H2error}
 Suppose that $\Sigma$ is perpendicular to the two sides of $\Gamma_{N\!e}$ as  in Figure \ref{figure:domain}. Let $\uu, \ww$ solve \eqref{eq:ppinterface} and $w, u$ solve \eqref{eq1} and \eqref{eq2}  then
\begin{alignat*}{1}
    \dt \sum_{n=0}^{N-1} \nu_f \|D^2 (U^{n+1})\|_{L^2(\Omega_f)}^2 \le 
&  C\nu_f\Xi_1^N(\pdt h_1, \pdt h_2, \pdt (\alpha g_1-g_2), \pdt \tilde{g}_2) \\
& + C \Xi_0^N(\p_x  h_1, \p_x h_2, \p_x (\alpha g_1-g_2),   \p_x \tilde{g}_2) \\
&+C\nu_f\|\pdt\tilde{g}_2^{2}\|_{L^2(\Omega_f)}^2+ C \nu_fZ^{1}(\pdt W, \pdt U, \pdt \Lambda)\\
&+ C  \dt \sum_{n=0}^{N-1} \nu_f \|h_2^{n+1}\|_{L^2(\Omega_f)}^2.
\end{alignat*}
\end{corollary}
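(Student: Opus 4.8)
The plan is to recognize that Corollary \ref{H2error} is simply the error-equation instance of the abstract $H^2$ stability bound in Corollary \ref{corH2}, so the entire argument reduces to matching the data and checking the two regularity hypotheses that Corollary \ref{corH2} inherits from Corollaries \ref{corx} and \ref{cordt}.

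First I would identify the error system \eqref{erroreq1}--\eqref{erroreq2} with the perturbed system \eqref{seq1}--\eqref{seq2}, letting the roles of $\tw,\tu,\tla$ be played by $W,U,\Lambda$ and reading off the right-hand sides as $b_1^{n+1} = -h_1^{n+1}$, $b_2^{n+1} = -h_2^{n+1}$, $\eps_1^{n+1} = \alpha g_1^{n+1} - g_2^{n+1}$, and $\eps_2^{n+1} = g_2^{n+1}$ on $\Sigma$. Because every term appearing in the functionals $\Xi_m^N$ and $Z^{n+1}$ is a sum of squared norms, the sign changes $b_i \mapsto -h_i$ are immaterial, and each argument slot of Corollary \ref{corH2} then maps exactly onto the corresponding slot in the claimed bound.

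Second, I would verify the regularity hypotheses $\p_x \eps_2 \in V_f$ and $\pdt \eps_2 \in V_f$ required by Corollaries \ref{corx} and \ref{cordt}. Since $g_2$ lives only on $\Sigma$, I would replace $\eps_2$ by the volume extension $\tilde{g}_2 \in V_f$ built before Corollary \ref{errorestimate}, which equals $g_2$ on $\Sigma$; this leaves the interface terms of the $\Xi$-functionals unchanged while rendering the volume terms meaningful. The geometry of Figure \ref{figure:domain} ($\Sigma$ horizontal, $\Gamma_{N\!e}^f$ vertical) is precisely what forces $\p_x \tilde{g}_2^{n+1}$ to vanish on $\Gamma_{N\!e}^f \cup \Gamma_D^f$, as noted just above the statement, so $\p_x \tilde{g}_2 \in V_f$; and $\pdt \tilde{g}_2 \in V_f$ is immediate from $\tilde{g}_2 \in V_f$.

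With the data matched and both hypotheses confirmed, Corollary \ref{corH2} applies verbatim and delivers the stated inequality. The only genuinely non-automatic ingredient — and thus the step I would treat most carefully — is the extension/geometry argument guaranteeing $\p_x \tilde{g}_2 \in V_f$; everything else is a direct substitution into an already-established estimate.
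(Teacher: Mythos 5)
Your proposal is correct and follows essentially the same route as the paper: the paper likewise treats the error system \eqref{erroreq1}--\eqref{erroreq2} as an instance of \eqref{seq1}--\eqref{seq2} with $b_i=-h_i$, $\eps_1=\alpha g_1-g_2$, $\eps_2=g_2$ (replaced by the extension $\tilde{g}_2$), observes that the horizontal-interface geometry makes $\p_x\tilde{g}_2^{n+1}$ vanish on $\Gamma_{N\!e}^f\cup\Gamma_D^f$ so that it lies in the required space, and then invokes Corollary \ref{corH2} verbatim. Your identification of the extension/geometry step as the only non-automatic ingredient matches the paper's (very brief) justification exactly.
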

Since $\pdt U$ and $\pdt W$ satisfy the same equations as $U, W$ with time difference right-hand sides, we have 
\begin{corollary}\label{H2errordt}
 Suppose that $\Sigma$ is perpendicular to the two sides of $\Gamma_{N\!e}$ as  in Figure \ref{figure:domain}. Let $\uu, \ww$ solve \eqref{eq:ppinterface} and $w, u$ solve \eqref{eq1} and \eqref{eq2}, then
 \begin{equation}\label{eq:h2duesti}
\begin{aligned}
    \dt \sum_{n=0}^{N-1} \nu_f \|D^2 (\pdt U^{n+1})\|_{L^2(\Omega_f)}^2 \le  
&  C \Big( \Xi_1^N(\pdt\p_x h_1, \pdt\p_x h_2, \pdt\p_x (\alpha g_1-g_2),  \pdt\p_x\tilde{g}_2)\\
&+\nu_f \Xi_2^N(\pdt^2 h_1, \pdt^2 h_2, \pdt^2 (\alpha g_1-g_2), \pdt^2 \tilde{g}_2)\Big) \\
&+  C\|\pdt\p_x\tilde{g}_2^{2}\|_{L^2(\Omega_f)}^2+ C Z^{1}(\pdt \p_xW, \pdt \p_xU, \pdt \p_x\Lla)\\
&+C\nu_fZ^{2}(\pdt^2 W, \pdt^2 U, \pdt^2 \Lla)\\
&+C\nu_f\|\pdt^2\tilde{g}_2^{3}\|_{L^2(\Omega_f)}^2+C  \dt \sum_{n=1}^{N-1} \nu_f \| \pdt h_2^{n+1}\|_{L^2(\Omega_f)}^2.
\end{aligned}
\end{equation}
\end{corollary}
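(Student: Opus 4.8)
The plan is to recognize that Corollary~\ref{H2errordt} is nothing but the abstract estimate Corollary~\ref{cordtx} specialized to the error system \eqref{erroreq1}--\eqref{erroreq2}. Indeed, \eqref{erroreq1}--\eqref{erroreq2} have exactly the structure of the general coupled problem \eqref{seq1}--\eqref{seq2} under the identification
\begin{equation*}
b_1^{n+1}=-h_1^{n+1},\quad b_2^{n+1}=-h_2^{n+1},\quad \eps_1^{n+1}=\alpha g_1^{n+1}-g_2^{n+1},\quad \eps_2^{n+1}=\tilde{g}_2^{n+1},
\end{equation*}
together with $\tw=W$, $\tu=U$, $\tla=\Lla$ and the homogeneous initial data \eqref{eq:wuinit}. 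So first I would substitute these data into Corollary~\ref{cordtx}; since each $\eps_2$ enters the right-hand side only through norms, the sign changes in $b_1,b_2$ are irrelevant, and term by term the substitution reproduces the claimed bound \eqref{eq:h2duesti} verbatim.

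The only thing that must be checked is that the hypotheses feeding Corollary~\ref{cordtx} hold for this choice of data. Corollary~\ref{cordtx} rests on Corollary~\ref{cordx} (which requires $\pdt\p_x\eps_2\in V_f$) and on Corollary~\ref{cordtt} (which requires $\pdt^2\eps_2\in V_f$). For the second, $\tilde{g}_2^{n+1}\in V_f$ holds by the very construction of the extension $\tilde{\lla}=\phi\,\nu_f\nabla\uu\cdot\bn_f$, hence $\pdt^2\tilde{g}_2\in V_f$. For the first, I would invoke the observation already recorded just before Corollary~\ref{H2error}: in the horizontal-interface configuration of Figure~\ref{figure:domain}, the function $\p_x\tilde{g}_2^{n+1}$ vanishes on $\Gamma_{N\!e}^f$ (because $\bn$ is aligned with the $x$-direction there) and on $\Gamma_D^f$, so $\p_x\tilde{g}_2^{n+1}\in V_f$ and therefore $\pdt\p_x\tilde{g}_2^{n+1}\in V_f$. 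With both memberships in hand, Corollary~\ref{cordtx} applies.

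Alternatively, and equivalently, I would reproduce the three-step argument of Corollary~\ref{cordtx} directly for the errors. First, Corollary~\ref{cordx} applied to \eqref{erroreq1}--\eqref{erroreq2} controls $\dt\sum_{n=1}^{N-1}\nu_f\|\nabla\p_x(\pdt U^{n+1})\|_{L^2(\Omega_f)}^2$, which bounds $\p_x^2(\pdt U)$ and $\p_x\p_y(\pdt U)$. Next, writing the interior equation of \eqref{erroreq2} for $\pdt U$ as $\nu_f\Delta(\pdt U^{n+1})=\pdt^2 U^{n+1}+\pdt h_2^{n+1}$ and using Corollary~\ref{cordtt} (i.e.\ \eqref{eq:WUdt2error}) to control $\pdt^2 U$, I would bound $\dt\sum_{n=1}^{N-1}\nu_f\|\Delta(\pdt U^{n+1})\|_{L^2(\Omega_f)}^2$. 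Finally, the elementary identity $\p_y^2=\Delta-\p_x^2$ gives $\|\p_y^2(\pdt U)\|\le 2(\|\p_x^2(\pdt U)\|+\|\Delta(\pdt U)\|)$, and assembling the three pieces yields the full $D^2$ bound.

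The only genuine subtlety---and the step I expect to require care rather than new ideas---is the $V_f$ membership of $\p_x\tilde{g}_2$: it is precisely here that the assumption that $\Sigma$ is horizontal and that the Neumann edges $\Gamma_{N\!e}^f$ are perpendicular to it is used, so that $\p_x$ commutes with the Robin and Neumann traces and creates no inadmissible boundary contribution. For a general interface this commutation fails (as noted in the remark following \eqref{seq2x}), which is exactly why the $H^2$ estimate is confined to the special configuration. Beyond this, the remaining work is bookkeeping: tracking the initial-level contributions $Z^1(\pdt\p_x W,\pdt\p_x U,\pdt\p_x\Lla)$ and $Z^2(\pdt^2 W,\pdt^2 U,\pdt^2\Lla)$, the correct lower summation index inherited from the twice-differenced estimates, and the leftover volume term $\dt\sum_{n=1}^{N-1}\nu_f\|\pdt h_2^{n+1}\|_{L^2(\Omega_f)}^2$, all of which already appear on the right-hand side of Corollary~\ref{cordtx}.
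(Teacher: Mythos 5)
Your proposal is correct and follows the paper's own route: the paper obtains Corollary~\ref{H2errordt} precisely by noting that $\pdt U$, $\pdt W$ satisfy the error equations \eqref{erroreq1}--\eqref{erroreq2} with time-differenced right-hand sides and then invoking the abstract estimate of Corollary~\ref{cordtx} under the identification $b_i\leftrightarrow -h_i$, $\eps_1\leftrightarrow \alpha g_1-g_2$, $\eps_2\leftrightarrow \tilde{g}_2$, with the $V_f$-membership of $\p_x\tilde{g}_2$ (hence of $\pdt\p_x\tilde{g}_2$) secured by the horizontal-interface configuration exactly as you argue. Your verification of the hypotheses and your alternative unpacking of the three-step argument are consistent with, and slightly more explicit than, the paper's essentially one-line justification.
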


Then, it is straight-forward to obtain the convergence rate by estimate the right-hand sides. See Appendix \ref{appdix:fcoro1} for a proof for the following Corollary. Note that we need the following additional assumptions.

 \begin{assumption}\label{assump:initial2}
 \begin{equation}
     \|\partial_t\p_x\uu(0)\|_{L^2(\Sigma)}\le C(\dt)^\frac12,\quad\|\partial_t\p_x\lla(0)\|_{L^2(\Sigma)}\le C(\dt)^\frac12.
 \end{equation}
\end{assumption}

\begin{corollary}\label{H2errordtexplcit}
 Suppose that $\Sigma$ is perpendicular to the two sides of $\Gamma_{N\!e}$ as  in Figure \ref{figure:domain}. Let $\uu, \ww$ solve \eqref{eq:ppinterface} and $w, u$ solve \eqref{eq1} and \eqref{eq2}  then, under Assumptions \ref{assump:initial1} and \ref{assump:initial2}, we have
\begin{alignat*}{1}
    \dt \sum_{n=1}^{N-1}& \nu_f \|D^2 (\pdt U^{n+1})\|_{L^2(\Omega_f)}^2 \\
    &\le  C(\dt)^2 \Big(\nu_f\mathfrak{Y}+\mathbb{Y}+\nu_f\mathsf{Y}_2+\mathsf{Y}_3\\
    &\ \ +\nu_f^2\|\pt^2\uu\|^2_{L^\infty((0,T),H^2(\Omega_f))}+\nu_f^3\|\pt^3\uu\|^2_{L^\infty((0,T),H^1(\Omega_f))}+\nu_f\|\pt^3\uu\|^2_{L^2((0,T),L^2(\Omega_f))}\Big)
\end{alignat*}
where $\mathfrak{Y}$ is defined in Corollary \ref{Corrate}
and $\mathbb{Y}$ is defined as
\begin{equation}\label{eq:bbY}
\begin{aligned}
 \mathbb{Y}:=&  \frac{1}{\nu_s}\|\p_x\pt^3 \ww\|_{L^2(0,T;L^2(\Omega_s))}^2 + (\frac{1}{\nu_f}+\frac{1}{\alpha})\|\p_x\pt^3 \uu\|_{L^2(0,T;L^2(\Omega_f))}^2 \\
& + (\frac{\nu_f}{\alpha^2}+\frac{\nu_f^2}{\alpha})  \|\p_x\pt^2 \uu\|_{L^2(0,T;H^1(\Omega_f))}^2+ \frac{(\nu_f)^3}{\alpha^2}  \|\p_x\pt^2 \uu\|_{L^2(0,T;H^2(\Omega_f))}^2\\
& +\frac{\alpha^2}{\nu_s}\|\p_x\pt^2 \uu\|_{L^2(0,T;L^2(\Sigma))}^2 + \frac{1}{\nu_f}\|\p_x\pt^2 \lla\|_{L^2(0,T;L^2(\Sigma))}^2+\frac{\nu_f^2}{\alpha^2}  \|\p_x\pt^2 \uu\|_{L^\infty(0,T;H^1(\Omega_f))}^2
\end{aligned}  
\end{equation}
and $\mathsf{Y}_3$ is defined as,
  \begin{equation*}
\begin{aligned}
    \mathsf{Y}_3&=\max_{0 \le  s \le t_1} \| \partial_t^2\p_x \ww(s)\|_{L^2(\Omega_s)}^2+\max_{0 \le  s \le t_1} \| \partial_t^2 \p_x\uu(s)\|_{L^2(\Omega_f)}^2\\
    &+\frac{\alpha^2}{\nu_s}(1+\dt\max_{0 \le  s \le t_1} \| \partial_t^2 \p_x\uu(s)\|_{L^2(\Sigma)}^2)\\
    &+\frac{1}{\alpha}(1+\dt \max_{0 \le  s \le t_1} \| \partial_t^2 \p_x\lla(s)\|_{L^2(\Sigma)}^2).
\end{aligned}
\end{equation*}
\end{corollary}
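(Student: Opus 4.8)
The plan is to start from the abstract bound \eqref{eq:h2duesti} of Corollary \ref{H2errordt} and estimate each of its seven right-hand side terms by $(\dt)^2$ times an appropriate space--time norm of the exact solution. The unifying tool is the elementary Taylor-remainder representation of the consistency errors: since $h_2^{n+1}=\pt\uu^{n+1}-\pdt\uu^{n+1}$, $g_1^{n+1}=\uu^{n+1}-\uu^n$, and $\tilde{g}_2^{n+1}=\tilde{\lla}^{n+1}-\tilde{\lla}^n$, each residual---and each of its discrete time differences or $\p_x$-derivatives---can be written as an integral of a higher time derivative of $\uu$, $\ww$ or $\lla$ over one or two time intervals. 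Combining such an integral representation with Cauchy--Schwarz converts $\dt\sum_n\|\cdot\|^2$ of a residual into $(\dt)^2$ times an $L^2(0,T;\cdot)$-norm of the next higher time derivative; this is exactly the mechanism already used to prove Corollary \ref{Corrate}.

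First I would treat the two $\Xi$ terms. The term $\Xi_1^N(\pdt\p_x h_1,\pdt\p_x h_2,\pdt\p_x(\alpha g_1-g_2),\pdt\p_x\tilde{g}_2)$ collects first time differences of $\p_x$-derivatives of the residuals; applying the Taylor-remainder bound term by term, now on quantities carrying one extra $\p_x$, produces $(\dt)^2\mathbb{Y}$, which is precisely the $\p_x\pt^k$ analogue of $\mathcal{Y}$. Likewise $\nu_f\Xi_2^N(\pdt^2 h_1,\pdt^2 h_2,\pdt^2(\alpha g_1-g_2),\pdt^2\tilde{g}_2)$ involves second time differences, so each residual is represented by a double integral of a fourth (respectively third) time derivative, yielding $(\dt)^2\nu_f\mathfrak{Y}$. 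The three leftover consistency terms $\|\pdt\p_x\tilde{g}_2^{2}\|_{L^2(\Omega_f)}^2$, $\nu_f\|\pdt^2\tilde{g}_2^{3}\|_{L^2(\Omega_f)}^2$ and $\dt\sum_{n}\nu_f\|\pdt h_2^{n+1}\|_{L^2(\Omega_f)}^2$ are handled by the same bounds, and since $\tilde{\lla}$ carries a factor $\nu_f\nabla\uu$, they contribute respectively the terms $\nu_f^2\|\pt^2\uu\|_{L^\infty H^2}^2$, $\nu_f^3\|\pt^3\uu\|_{L^\infty H^1}^2$ and $\nu_f\|\pt^3\uu\|_{L^2 L^2}^2$ on the right-hand side of the claimed estimate.

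Next I would dispatch the two initial contributions. The term $\nu_f Z^{2}(\pdt^2 W,\pdt^2 U,\pdt^2\Lla)$ is bounded by $(\dt)^2\nu_f\mathsf{Y}_2$ directly from Corollary \ref{coro:initzs} under Assumption \ref{assump:initial1}. The term $Z^{1}(\pdt\p_x W,\pdt\p_x U,\pdt\p_x\Lla)$ requires a fresh estimate, since it is the initial-step energy of the error that has been both $\p_x$-differentiated and time-differenced. I would repeat the argument proving \eqref{eq:initz1}, but applied to the $\p_x$-differentiated error equations \eqref{seq1x}--\eqref{seq2x}, invoking Assumption \ref{assump:initial2} to control $\p_x\pt\uu(0)$ and $\p_x\pt\lla(0)$ on $\Sigma$. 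This is where $\mathsf{Y}_3$ is generated, and its $\max_{0\le s\le t_1}$ structure reflects that near $t=0$ the residuals must be bounded pointwise in time rather than through a clean $L^2(0,T)$ integral.

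The main obstacle is exactly this last initial-step estimate for $Z^{1}(\pdt\p_x W,\pdt\p_x U,\pdt\p_x\Lla)$. At the first time level the consistency errors no longer telescope into integrals over the full interval, so the Taylor-remainder mechanism degenerates and the bound must be extracted from the special initial assumptions, producing the less symmetric quantity $\mathsf{Y}_3$; coordinating the $\nu_f$-weights so that every piece is genuinely $O((\dt)^2)$ with the stated coefficients is the remaining delicate bookkeeping. Once these initial energies are controlled, summing all contributions yields the stated bound.
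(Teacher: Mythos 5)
Your proposal is correct and follows essentially the same route as the paper's own proof: starting from Corollary \ref{H2errordt}, you bound the two $\Xi$ terms by $(\dt)^2\mathbb{Y}$ and $(\dt)^2\nu_f\mathfrak{Y}$ via the Taylor-remainder mechanism of Corollary \ref{Corrate}, handle the three leftover consistency terms by the analogues of \eqref{eq:pdtg2}, \eqref{eq:pdt2g2} and \eqref{eq:dth1}, invoke \eqref{eq:initz2} for $\nu_f Z^2(\pdt^2 W,\pdt^2 U,\pdt^2\Lla)$, and rerun the \eqref{eq:initz1} argument on the $\p_x$-differentiated error equations under Assumption \ref{assump:initial2} to produce $\mathsf{Y}_3$ --- exactly the decomposition and term-by-term assignment the paper uses. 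You also correctly pinpoint the initial-step estimate $Z^1(\pdt\p_x W,\pdt\p_x U,\pdt\p_x\Lla)$ as the one piece requiring a genuinely new (if routine) adaptation rather than a direct citation.
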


\section{Numerical Experiments}\label{sec:numerics}

In this section we provide numerical experiments that agree with our theoretical results. We let 
  \begin{alignat*}{3}
    e_u&= \|U^N\|_{L^2(\Omega_f)},\quad e_{1,u}=  \|U^N-U^{N-1}\|_{L^2(\Omega_f)},\\
    e_{2,u}&=  \|(U^N-U^{N-1})-(U^{N-1}-U^{N-2})\|_{L^2(\Omega_f)},\\
    e_{\lambda}&=\|\Lambda^N\|_{L^2(\Sigma)},\quad e_{1,\lambda}=\|\Lambda^N-\Lambda^{N-1}\|_{L^2(\Sigma)}\\
    e_{1,u,2}&=\|U^N-U^{N-1}\|_{H^2(\Omega_f)}.
  \end{alignat*}

All numerical experiments are performed using FEniCS and multiphenics \cite{alnaes2015fenics,multiphenics}. Although we only analyze the semi-discrete method, here we present the results for a fully discrete method where we use the piecewise linear finite element method for the spatial discretization, except for the computation of $e_{1,u,2}$, where we use the piecewise quadratic finite element method because piecewise linear function do not approximation a function well in $H^2$-norm. In addition, we also present convergence rates for the Lagrange multiplier. 

\begin{example}\label{ex:hrztalinter}
  We consider the domain $\Omega=(0,1)^2$, $\Omega_f =(0,1)\times (0,0.75)$ and $\Omega_s=(0,1) \times (0.75,1)$.  See Figure \ref{figure:domain} for an illustration. We take $\nu_f=1=\nu_s$ and take the solution of \eqref{var} to be
\begin{equation*}
    \ww=\uu= e^{-2\pi^2 t} \cos(\pi x_1) \sin(\pi x_2).
\end{equation*}
\end{example}

We take $h=\dt$, $T=0.25$ and $\alpha=4$ where $h$ is the mesh size of the triangulation.

\begin{table}[h]
\begin{center}

\begin{tabular}{|c||c|c|c|c|c|c|}
\hline
$\Dt$ &  $e_u$ &  rates & $e_{1,u}$ &  rates & $e_{2,u}$ & rates \\
\hline
$(1/2)^2$&7.65e-02&-&7.73e-02&-&6.57e+01&-\\
\hline
$(1/2)^3$&3.72e-02&1.04&5.87e-02&0.40&1.55e-01&8.73\\
\hline
$(1/2)^4$&1.74e-02&1.10&1.47e-02&1.99&7.91e-03&4.29\\
\hline
$(1/2)^5$&7.95e-03&1.13&3.58e-03&2.04&1.27e-03&2.64\\
\hline
$(1/2)^6$&3.52e-03&1.17&8.41e-04&2.09&1.75e-04&2.85\\
\hline
$(1/2)^7$&1.62e-03&1.12&1.96e-04&2.10&2.15e-05&3.03\\
\hline
$(1/2)^8$&7.70e-04&1.07&4.69e-05&2.06&2.62e-06&3.04\\
\hline
$(1/2)^9$&3.75e-04&1.04&1.14e-05&2.04&3.22e-07&3.02\\
\hline
\end{tabular}
\end{center}
\caption{Errors and convergence rates of $U^N$ for Example \ref{ex:hrztalinter}}\label{fig:conv1}
\end{table}

\begin{table}[h]
\begin{center}
\begin{tabular}{|c||c|c|c|c|c|c|}
\hline
$\Dt$ &  $e_{\lambda}$ &  rates & $e_{1,\lambda}$ &  rates & $e_{1,u,2}$ & rates \\
\hline
$(1/2)^2$&2.55e-01&-&2.55e-01&-&1.33e+01&-\\
\hline
$(1/2)^3$&9.73e-02&1.39&2.41e-01&0.08&1.11e+01&0.26\\
\hline
$(1/2)^4$&3.06e-02&1.67&4.41e-02&2.45&2.61e-01&2.08\\
\hline
$(1/2)^5$&1.62e-02&0.92&6.69e-03&2.72&5.76e-02&2.18\\
\hline
$(1/2)^6$&9.20e-03&0.82&2.06e-03&1.70&1.40e-02&2.04\\
\hline
$(1/2)^7$&4.55e-03&1.01&5.28e-04&1.97&3.32e-03&2.07\\
\hline
$(1/2)^8$&2.23e-03&1.03&1.31e-04&2.01&8.03e-04&2.05\\
\hline
$(1/2)^9$&1.10e-03&1.02&3.26e-05&2.01&1.97e-04&2.03\\
\hline

\end{tabular}
\end{center}
\caption{Error and convergence rates of $\Lambda^N$ for Example \ref{ex:hrztalinter}}\label{fig:conv2}
\end{table}

As we can see from Tables \ref{fig:conv1}-\ref{fig:conv2}, the $L^2$ error at the final step, $e_u$ is of order $(\Delta t)$ whereas the difference of two consecutive errors, $e_{1,u}$ is of order $(\Delta t)^2$ and the second difference, $e_{2,u}$ is of order $(\Delta t)^3$. The $L^2$ error of the Lagrange multiplier at the final time, $e_{\lambda}$ is of order $\Delta t$ and the difference $e_{1,\lambda}$, is of order $(\Delta t)^2$. It is also clear that the $H^2$ error of the difference of $U^N$, $e_{1,u,2}$ is of order $(\dt)^2$ as we proved in Corollary \ref{H2errordtexplcit}.

\begin{example}\label{ex:nonhoriztal}
  In this example, we test our algorithm for a non-horizontal interface problem. We consider the domain $\Omega=(0,1)^2$ and we let $\Sigma$ be defined as the straight line connecting $(0,0.25)$ and $(1,0.75)$. We then define $\Omega_s$ as the region above $\Sigma$ and $\Omega_f$ as the region below $\Sigma$. We take $\nu_f=1=\nu_s$ and take the solution of \eqref{var} to be
\begin{equation*}
    \ww=\uu= e^{-2\pi^2 t} \cos(\pi x_1) \sin(\pi x_2).
\end{equation*}
Other parameters are identical to those of Example \ref{ex:hrztalinter}.
\end{example}

We report the convergence results in Tables \ref{fig:conv3}-\ref{fig:conv4}. We again observe expected convergence rates for both $U^N$ and $\Lambda^N$. It indicates that our methods also work for a more general interface problem.

\begin{table}[h]
\begin{center}

\begin{tabular}{|c||c|c|c|c|c|c|}
\hline
$\Dt$ &  $e_u$ &  rates & $e_{1,u}$ &  rates & $e_{2,u}$ & rates \\
\hline
$(1/2)^2$&8.05e-02&-&9.64e-02&-&4.87e+01&-\\
\hline
$(1/2)^3$&5.10e-02&0.66&4.23e-02&1.19&1.36e-01&8.48\\
\hline
$(1/2)^4$&2.46e-02&1.05&1.57e-02&1.43&4.94e-03&4.78\\
\hline
$(1/2)^5$&9.20e-03&1.42&4.07e-03&1.95&1.47e-03&1.74\\
\hline
$(1/2)^6$&3.52e-03&1.38&8.46e-04&2.27&1.82e-04&3.02\\
\hline
$(1/2)^7$&1.52e-03&1.22&1.86e-04&2.18&2.11e-05&3.11\\
\hline
$(1/2)^8$&7.00e-04&1.11&4.33e-05&2.10&2.49e-06&3.08\\
\hline
$(1/2)^9$&3.36e-04&1.06&1.04e-05&2.06&3.02e-07&3.04\\
\hline
\end{tabular}
\end{center}
\caption{Errors and convergence rates of $U^N$ for Example \ref{ex:nonhoriztal}}\label{fig:conv3}
\end{table}

\begin{table}[h]
\begin{center}
\begin{tabular}{|c||c|c|c|c|c|c|}
\hline
$\Dt$ &  $e_{\lambda}$ &  rates & $e_{1,\lambda}$ &  rates & $e_{1,u,2}$ & rates \\
\hline
$(1/2)^2$&8.51e-01&-&8.54e-01&-&2.04e+01&-\\
\hline
$(1/2)^3$&5.01e-01&0.76&3.81e-01&1.16&9.67e-01&1.08\\
\hline
$(1/2)^4$&1.94e-01&1.37&1.45e-01&1.39&3.66e-01&1.40\\
\hline
$(1/2)^5$&5.34e-02&1.86&2.44e-02&2.58&9.08e-02&2.01\\
\hline
$(1/2)^6$&1.84e-02&1.54&4.38e-03&2.48&1.86e-02&2.29\\
\hline
$(1/2)^7$&7.49e-03&1.30&9.10e-04&2.27&4.04e-03&2.20\\
\hline
$(1/2)^8$&3.39e-03&1.14&2.07e-04&2.13&9.31e-04&2.11\\
\hline
$(1/2)^9$&1.61e-03&1.07&4.95e-05&2.07&2.23e-04&2.06\\
\hline
\end{tabular}
\end{center}
\caption{Error and convergence rates of $\Lambda^N$ for Example \ref{ex:nonhoriztal}}\label{fig:conv4}
\end{table}

\section{Concluding Remarks}\label{sec:conclude}

We analyzed the Robin-Robin coupling methods \cite{burman2023loosely} for parabolic-parabolic interface problems and proved higher convergence rates in time for the first-order and second-order discrete time derivatives. We also prove $H^2$ estimates of the discrete time derivatives in a special case. All the estimates in this work are key ingredients in proving that a prediction correction method \cite{ppcorrection}  produces a $O((\dt)^2)$ convergence rate.

\appendix

\section{Sketch of Proof: Corollary \ref{coro:initzs}}\label{appdix:initcoro}

\begin{proof}[Proof of \eqref{eq:initz1} in Corollary \ref{coro:initzs}]
  It follows from \eqref{eq:errorlemma1} that
  \begin{alignat*}{1}\label{eq:z1}
Z^{1}(W, U, \Lambda)+S^{1}(W, U, \Lambda)&=Z^{0}(W, U, \Lambda) + \dt F^{1}(W, U)  +\frac{\dt}{\alpha} \bl \tilde{g}_2^1, \Lambda^1 \br\\
&=\dt F^{1}(W, U)  +\frac{\dt}{\alpha} \bl \tilde{g}_2^1, \Lambda^1 \br,
\end{alignat*}
where we use the fact that $Z^{0}(W, U, \Lambda)=0$. By the definition of $F^{1}$ in Lemma \ref{errorLemma1} and the proof in Lemma \ref{lemmasummation}, we have
\begin{alignat*}{1}
Z^{1}(W, U, \Lambda)+S^{1}(W, U, \Lambda)&=\dt\Big((-h_1^1,W^1)_s+(-h_2^1, U^1)_f+\bl\alpha g_1^1, W^1\br+\bl g_2^1, U^1\br\Big)
+\frac{\dt}{\alpha} \bl \tilde{g}_2^1, \Lambda^1 \br\\
&=S_1+S_2+\ldots+S_5.
\end{alignat*}
For $S_1$, we have
\begin{equation*}
\begin{aligned}
   \dt(-h_1^1,W^1)_s&\le \frac14\|W^1\|^2_{L^2(\Omega_s)}+C(\dt)^2\|h_1^1\|^2_{L^2(\Omega_s)}\\
   &\le\frac14\|W^1\|^2_{L^2(\Omega_s)}+C(\dt)^4\max_{0 \le  s \le t_1} \| \partial_t^2 \ww(s)\|_{L^2(\Omega_s)}^2.
\end{aligned}
\end{equation*}
The term $S_2$ can be estimated similarly: 
\begin{equation*}
\begin{aligned}
   \dt(-h_2^1,U^1)_f\le\frac14\|U^1\|^2_{L^2(\Omega_f)}+C(\dt)^4\max_{0 \le  s \le t_1} \| \partial_t^2 \uu(s)\|_{L^2(\Omega_f)}^2.
\end{aligned}
\end{equation*}
The term $S_3$ is estimated as follows where we use a trace inequality,
\begin{equation}
  \begin{aligned}
    \dt\bl\alpha g_1^1, W^1\br&\le \frac{\nu_s\dt}{2}\|\nabla W^1\|^2_{L^2(\Omega_s)}+C\frac{\alpha^2}{\nu_s}\dt\|g_1^1\|^2_{L^2(\Sigma)}\\
    &= \frac{\nu_s\dt}{2}\|\nabla W^1\|^2_{L^2(\Omega_s)}+C\frac{\alpha^2}{\nu_s}\dt\|\uu^{1}-\uu^0\|^2_{L^2(\Sigma)}.
  \end{aligned}
\end{equation}
Moreover, using 
\begin{equation*}
\uu^{1}-\uu^0= \dt \partial_t \uu(0) + \int_{0}^{t_1} \int_{0}^r \partial_t^2 \uu(s) ds dr.
\end{equation*}
we have 
\begin{equation}\label{eq:du0}
 \|  \uu^{1}-\uu^0\|_{L^2(\Sigma)}^2  \le C (\dt)^2 \| \partial_t \uu(0)\|_{L^2(\Sigma)}^2+C (\dt)^4 \max_{0 \le  s \le t_1} \| \partial_t^2 \uu(s)\|_{L^2(\Sigma)}^2.
\end{equation}
It follows from \eqref{eq:du0} that
\begin{equation*}
    \dt\bl\alpha g_1^1, W^1\br\le \frac{\nu_s\dt}{2}\|\nabla W^1\|^2_{L^2(\Omega_s)}+C\frac{\alpha^2}{\nu_s}\dt((\dt)^2 \| \partial_t \uu(0)\|_{L^2(\Sigma)}^2+(\dt)^4 \max_{0 \le  s \le t_1} \| \partial_t^2 \uu(s)\|_{L^2(\Sigma)}^2).
\end{equation*}
Similarly, we have the following estimate for $S_4$:
\begin{equation*}
    \dt\bl g_2^1, U^1\br\le \frac{\alpha\dt}{4}\|U^1\|^2_{L^2(\Sigma)}+C\frac{\dt}{\alpha}((\dt)^2 \| \partial_t \lla(0)\|_{L^2(\Sigma)}^2+(\dt)^4 \max_{0 \le  s \le t_1} \| \partial_t^2 \lla(s)\|_{L^2(\Sigma)}^2).
\end{equation*}
At last, for $S_5$, we have,
\begin{equation}
  \begin{aligned}
    \frac{\dt}{\alpha} \bl \tilde{g}_2^1, \Lambda^1 \br&\le \frac{\dt}{4\alpha}\|\Lambda^1\|^2_{L^2(\Sigma)}+C\frac{\dt}{\alpha}\|g_2^1\|^2_{L^2(\Sigma)}\\
    &\le \frac{\dt}{4\alpha}\|\Lambda^1\|^2_{L^2(\Sigma)}+C\frac{\dt}{\alpha}((\dt)^2 \| \partial_t \lla(0)\|_{L^2(\Sigma)}^2+ (\dt)^4 \max_{0 \le  s \le t_1} \| \partial_t^2 \lla(s)\|_{L^2(\Sigma)}^2).
  \end{aligned}
\end{equation}
Combining all the estimates above we obtain
\begin{equation}\label{eq:z1s1}
\begin{aligned}
    Z^{1}(W, U, &\Lambda)+S^{1}(W, U, \Lambda)\\
    \le& C(\dt)^4\max_{0 \le  s \le t_1} \| \partial_t^2 \ww(s)\|_{L^2(\Omega_s)}^2+C(\dt)^4\max_{0 \le  s \le t_1} \| \partial_t^2 \uu(s)\|_{L^2(\Omega_f)}^2\\
    &+C\frac{\alpha^2}{\nu_s}\dt((\dt)^2 \| \partial_t \uu(0)\|_{L^2(\Sigma)}^2+C (\dt)^4 \max_{0 \le  s \le t_1} \| \partial_t^2 \uu(s)\|_{L^2(\Sigma)}^2)\\
    &+C\frac{\dt}{\alpha}((\dt)^2 \| \partial_t \lla(0)\|_{L^2(\Sigma)}^2+C (\dt)^4 \max_{0 \le  s \le t_1} \| \partial_t^2 \lla(s)\|_{L^2(\Sigma)}^2).
\end{aligned}
\end{equation}
Therefore, it follows from Assumption \ref{assump:initial} that
\begin{equation}
  Z^{1}(W, U, \Lambda)\le C(\dt)^4\mathsf{Y}_1,
\end{equation}
where 
\begin{equation*}
\begin{aligned}
    \mathsf{Y}_1&=\max_{0 \le  s \le t_1} \| \partial_t^2 \ww(s)\|_{L^2(\Omega_s)}^2+\max_{0 \le  s \le t_1} \| \partial_t^2 \uu(s)\|_{L^2(\Omega_f)}^2\\
    &+\frac{\alpha^2}{\nu_s}(1+\dt\max_{0 \le  s \le t_1} \| \partial_t^2 \uu(s)\|_{L^2(\Sigma)}^2)\\
    &+\frac{1}{\alpha}(1+\dt \max_{0 \le  s \le t_1} \| \partial_t^2 \lla(s)\|_{L^2(\Sigma)}^2).
\end{aligned}
\end{equation*}
Notice that $Z^1(\pdt W, \pdt U, \pdt \Lambda)=\frac{1}{(\dt)^2}Z^1(W^1-W^0, U^1-U^0, \Lambda^1-\Lambda^0)=\frac{1}{(\dt)^2}Z^1(W^1, U^1, \Lambda^1)$ due to \eqref{eq:wuinit}. Hence we have
\begin{equation}
  Z^1(\pdt W, \pdt U, \pdt \Lambda)\le C(\dt)^2\mathsf{Y}_1.
\end{equation}
\end{proof}

\begin{proof}[Proof of \eqref{eq:initz2} in Corollary \ref{coro:initzs}]

Denote $\mathcal{U}^{n+1}=\pdt U^{n+1}$, $\mathcal{W}^{n+1}=\pdt W^{n+1}$, $\mathcal{L}^{n+1}=\pdt \Lambda^{n+1}$. First, we notice the following equation is valid.
\begin{subequations}\label{eq:stweakpdt2}
\begin{alignat}{2}
(\frac{\mathcal{W}^2-\mathcal{W}^1}{\dt}, z)_s+\nu_s(\nabla \mathcal{W}^{2}, \nabla z)_s + \alpha \bl \mathcal{W}^{2} - \mathcal{U}^1, z \br + \bl \mathcal{L}^1, z\br=& \mathsf{L}_1(z) , \quad && z \in V_s, \label{stweak1pdt2}\\
(\frac{\mathcal{U}^2-\mathcal{U}^1}{\dt}, v)_f+\nu_f(\nabla \mathcal{U}^2, \nabla v)_f- \bl \mathcal{L}^2, v\br=&\mathsf{L}_2(v), \quad &&v \in V_f,  \label{stweak2pdt2}\\
\bl \alpha(\mathcal{U}^2-\mathcal{W}^2)+(\mathcal{L}^2-\mathcal{L}^1), \mu \br=&\mathsf{L}_3(\mu), \quad && \mu \in V_g,  \label{stweak3pdt2}
\end{alignat}
\end{subequations}
where 
\begin{alignat*}{1}
\mathsf{L}_1(z) :=&  -(\pdt h_1^2, z)_s + \bl \alpha\pdt g_1^2-\pdt g_2^2, z \br  ,\\
\mathsf{L}_2(v):=&-(\pdt h_2^2,v)_f,\\
\mathsf{L}_3(\mu):=&  \bl \pdt g_2^2, \mu \br.
\end{alignat*}
Note that the following is also valid at $t_1$,
\begin{subequations}\label{eq:stweakt1}
\begin{alignat}{2}
(\frac{W^1}{\dt}, z)_s+\nu_s(\nabla W^{1}, \nabla z)_s+ \alpha \bl W^{1}, z \br=&-(h_1^1, z)_s + \bl \alpha g_1^1- g_2^1, z \br  , \quad && z \in V_s, \label{stweak1t1}\\
(\frac{U^1}{\dt}, v)_f+\nu_f(\nabla U^1, \nabla v)_f- \bl \Lambda^1, v\br=&-(h_2^1,v)_f, \quad &&v \in V_f,  \label{stweak2t1}\\
\bl \alpha(U^1-W^1)+\Lambda^1, \mu \br=& \bl g_2^1, \mu \br, \quad && \mu \in V_g,  \label{stweak3t1}
\end{alignat}
\end{subequations}
where we use the fact $W^0=U^0=\Lambda^0=0$.

Denote $\mathfrak{W}^2=\frac{W^2-2W^1}{\dt}$, $\mathfrak{U}^2=\frac{U^2-2U^1}{\dt}$, $\mathfrak{L}^2=\frac{\Lambda^2-2\Lambda^1}{\dt}$, $\mathfrak{W}^1=\frac{W^1}{\dt}$, $\mathfrak{U}^1=\frac{U^1}{\dt}$, $\mathfrak{L}^1=\frac{\Lambda^1}{\dt}$. Divide \eqref{eq:stweakt1} by $\dt$ and subtract it from \eqref{eq:stweakpdt2}, we have the following
\begin{subequations}\label{eq:stweakpdt22}
\begin{alignat}{2}
(\frac{\mathfrak{W}^2-\mathfrak{W}^1}{\dt}, z)_s+\nu_s(\nabla \mathfrak{W}^{2}, \nabla z)_s + \alpha \bl \mathfrak{W}^{2} - \mathfrak{U}^1, z \br + \bl \mathfrak{L}^1, z\br=& \mathbb{L}_1(z) , \quad && z \in V_s, \label{stweak1pdt22}\\
(\frac{\mathfrak{U}^2-\mathfrak{U}^1}{\dt}, v)_f+\nu_f(\nabla \mathfrak{U}^2, \nabla v)_f- \bl \mathfrak{L}^2, v\br=&\mathbb{L}_2(v), \quad &&v \in V_f,  \label{stweak2pdt22}\\
\bl \alpha(\mathfrak{U}^2-\mathfrak{W}^2)+(\mathfrak{L}^2-\mathfrak{L}^1), \mu \br=&\mathbb{L}_3(\mu), \quad && \mu \in V_g,  \label{stweak3pdt22}
\end{alignat}
\end{subequations}
where 
\begin{alignat*}{1}
\mathbb{L}_1(z) :=&  -(\frac{h_1^2-2h_1^1}{\dt}, z)_s + \bl \alpha\frac{g_1^2-2g_1^1}{\dt}-\frac{g_2^2-2g_2^1}{\dt}, z \br  ,\\
\mathbb{L}_2(v):=&-(\frac{h_2^2-2h_2^1}{\dt},v)_f,\\
\mathbb{L}_3(\mu):=&  \bl \frac{g_2^2-2g_2^1}{\dt}, \mu \br.
\end{alignat*}
It follows from Lemma \ref{errorLemma1} that the following relation holds:
\begin{equation*}
  Z^2(\mathfrak{W},\mathfrak{U},\mathfrak{L})+S^2(\mathfrak{W},\mathfrak{U},\mathfrak{L})=Z^1(\mathfrak{W},\mathfrak{U},\mathfrak{L})+\dt F^{2}(\mathfrak{W},\mathfrak{U})+\frac{\dt}{\alpha}\bl\frac{g_2^2-2g_2^1}{\dt},\mathfrak{L}^2\br.
\end{equation*}
Notice that $Z^2(\mathfrak{W},\mathfrak{U},\mathfrak{L})=(\dt)^2 Z^2(\pdt^2W,\pdt^2U,\pdt^2\Lambda)$ and $Z^1(\mathfrak{W},\mathfrak{U},\mathfrak{L})=Z^1(\pdt W,\pdt U,\pdt \Lambda)$. Therefore, we have
\begin{equation}\label{eq:zpdt2}
\begin{aligned}
   &(\dt)^2Z^2(\pdt^2W,\pdt^2U,\pdt^2\Lambda)+S^2(\mathfrak{W},\mathfrak{U},\mathfrak{L})\\
   &=Z^1(\pdt W,\pdt U,\pdt \Lambda)+\dt F^{2}(\mathfrak{W},\mathfrak{U})+\frac{\dt}{\alpha}\bl\frac{g_2^2-2g_2^1}{\dt},\mathfrak{L}^2\br.
\end{aligned}
\end{equation}
Now we estimate the right-hand side of \eqref{eq:zpdt2} term by term.

For $Z^1(\pdt W,\pdt U,\pdt \Lambda)$, it follows from \eqref{eq:z1s1} that
\begin{equation}\label{eq:z1ddt}
  \begin{aligned}
    Z^1(\pdt W,\pdt U,\pdt \Lambda)\le& C(\dt)^2\max_{0 \le  s \le t_1} \| \partial_t^2 \ww(s)\|_{L^2(\Omega_s)}^2+C(\dt)^2\max_{0 \le  s \le t_1} \| \partial_t^2 \uu(s)\|_{L^2(\Omega_f)}^2\\
    &+C\frac{\alpha^2}{\nu_s}\dt( \| \partial_t \uu(0)\|_{L^2(\Sigma)}^2+C (\dt)^2 \max_{0 \le  s \le t_1} \| \partial_t^2 \uu(s)\|_{L^2(\Sigma)}^2)\\
    &+C\frac{\dt}{\alpha}(\| \partial_t \lla(0)\|_{L^2(\Sigma)}^2+C (\dt)^2 \max_{0 \le  s \le t_1} \| \partial_t^2 \lla(s)\|_{L^2(\Sigma)}^2).
\end{aligned}
\end{equation}
By the Mean Value Theorem, we have
\begin{equation}\label{eq:mvt}
  \partial_t^2 \ww(s)=\partial_t^2\ww(0)+s\partial^3_t\ww(\theta),
\end{equation}
where $0<\theta<s$. Therefore, we obtain
\begin{equation}\label{eq:maxdt}
  \max_{0 \le  s \le t_1} \| \partial_t^2 \ww(s)\|_{L^2(\Omega_s)}^2\le \|\partial_t^2\ww(0)\|_{L^2(\Omega_s)}^2+(\dt)^2\max_{0 <\theta<s \le t_1} \| \partial_t^3 \ww(\theta)\|_{L^2(\Omega_s)}^2.
\end{equation}
Applying the same technique in \eqref{eq:maxdt} to $\uu$ and $\lla$, the estimate \eqref{eq:maxdt} becomes
\begin{equation}\label{eq:z1ddtextra}
  \begin{aligned}
    &Z^1(\pdt W,\pdt U,\pdt \Lambda)\\
    &\le C(\dt)^2\|\partial_t^2\ww(0)\|_{L^2(\Omega_s)}^2+C(\dt)^2\|\partial_t^2\uu(0)\|_{L^2(\Omega_f)}^2 \\
    &+C(\dt)^4\max_{0 <\theta<s \le t_1} \| \partial_t^3 \ww(\theta)\|_{L^2(\Omega_s)}^2+C(\dt)^4\max_{0 <\theta<s \le t_1} \| \partial_t^3 \uu(\theta)\|_{L^2(\Omega_f)}^2\\
    &+C\frac{\alpha^2}{\nu_s}\dt( \| \partial_t \uu(0)\|_{L^2(\Sigma)}^2+C (\dt)^4 \max_{0 <\theta<s \le t_1} \| \partial_t^3 \uu(\theta)\|_{L^2(\Sigma)}^2)\\
    &+C\frac{\dt}{\alpha}(\| \partial_t \lla(0)\|_{L^2(\Sigma)}^2+C (\dt)^4 \max_{0 <\theta<s \le t_1} \| \partial_t^3 \lla(\theta)\|_{L^2(\Sigma)}^2)\\
    &+C\frac{\alpha^2}{\nu_s}(\dt)^3\|\pt^2\uu(0)\|^2_{L^2(\Sigma)}+C\frac{(\dt)^3}{\alpha}\|\pt^2\lla(0)\|^2_{L^2(\Sigma)}.
\end{aligned}
\end{equation}

Now we estimate the following term
\begin{equation*}
  \dt F^{2}(\mathfrak{W},\mathfrak{U})=\dt\Big[(-\frac{h_1^2-2h_1^1}{\dt}, \mathfrak{W}^{2})_s+(-\frac{h_2^2-2h_2^1}{\dt}, \mathfrak{U}^{2})_f + \bl \alpha\frac{g_1^2-2g_1^1}{\dt}, \mathfrak{W}^{2} \br + \bl \mathfrak{U}^{2}-\mathfrak{U}^1, \frac{g_2^2-2g_2^1}{\dt} \br\Big]. 
\end{equation*}
Firstly, it follows from the definition of $h_1$ and \eqref{eq:maxdt} that
\begin{equation}
\begin{aligned}
    &\dt(-\frac{h_1^2-2h_1^1}{\dt},\mathfrak{W}^{2})_s\\
    =&\dt(-\frac{h_1^2-h_1^1}{\dt},\mathfrak{W}^{2})_s+(h_1^1,\mathfrak{W}^{2})_s\\
    =&\dt(\pdt^2\ww^2-\pdt(\pt\ww^2),\mathfrak{W}^{2})_s+(h_1^1,\mathfrak{W}^{2})_s\\
    \le& \frac14\|\mathfrak{W}^{2}\|^2_{L^2(\Omega_s)}+C(\dt)^2\|\pdt^2\ww^2-\pdt(\pt\ww^2)\|^2_{L^2(\Omega_s)}+C\|h_1^1\|^2_{L^2(\Omega_s)}\\
    \le& \frac14\|\mathfrak{W}^{2}\|^2_{L^2(\Omega_s)}+C(\dt)^4\max_{0 \le  s \le t_2} \| \partial_t^3 \ww(s)\|_{L^2(\Omega_s)}^2+C(\dt)^2\max_{0 \le  s \le t_1} \| \partial_t^2 \ww(s)\|_{L^2(\Omega_s)}^2\\
    \le& \frac14\|\mathfrak{W}^{2}\|^2_{L^2(\Omega_s)}+C(\dt)^4\max_{0 \le  s \le t_2} \| \partial_t^3 \ww(s)\|_{L^2(\Omega_s)}^2\\
    &+C(\dt)^2\|\partial_t^2\ww(0)\|_{L^2(\Omega_s)}^2+C(\dt)^4\max_{0 <\theta<s \le t_1} \| \partial_t^3 \ww(\theta)\|_{L^2(\Omega_s)}^2.
\end{aligned}
\end{equation}
Similarly, we have
\begin{equation}
\begin{aligned}
    &\dt(-\frac{h_2^2-2h_2^1}{\dt},\mathfrak{U}^{2})_f\\
    \le& \frac14\|\mathfrak{U}^{2}\|^2_{L^2(\Omega_f)}+C(\dt)^4\max_{0 \le  s \le t_2} \| \partial_t^3 \uu(s)\|_{L^2(\Omega_f)}^2\\
    &+C(\dt)^2\|\partial_t^2\uu(0)\|_{L^2(\Omega_f)}^2+C(\dt)^4\max_{0 <\theta<s \le t_1} \| \partial_t^3 \uu(\theta)\|_{L^2(\Omega_f)}^2.
\end{aligned}
\end{equation}
We then have, by the definition of $g_1$,
\begin{equation}\label{eq:g1esti}
\begin{aligned}
    &\alpha\dt\bl\frac{g_1^2-2g_1^1}{\dt},\mathfrak{W}^{2}\br\\
    =&\alpha\dt^2\bl\pdt^2\uu^2, \mathfrak{W}^2\br+\alpha\bl-(\uu^1-\uu^0), \mathfrak{W}^2\br\\
    \le& \frac12\nu_s\dt\|\nabla\mathfrak{W}^{2}\|^2_{L^2(\Sigma)}+C\frac{\alpha^2(\dt)^3}{\nu_s}\|\pdt^2\uu^2\|^2_{L^2(\Sigma)}+C\frac{\alpha^2}{\nu_s\dt}\|\uu^1-\uu^0\|^2_{L^2(\Sigma)}.
\end{aligned}
\end{equation}
Notice that
\begin{equation*}
  \pdt^2\uu^2=\pt^2\uu(0)+\frac{1}{\dt^2}\int_{t_0}^{t_2}(\dt-|r-t_1|)\int_0^r\pt^3\uu(s)\ \!ds\ \!dr,
\end{equation*}
and thus
\begin{equation}
  \|\pdt^2\uu^2\|^2_{L^2(\Sigma)}\le C\|\pt^2\uu(0)\|^2_{L^2(\Sigma)}+C\dt^2\max_{0 \le  s \le t_2} \| \partial_t^3 \uu(s)\|_{L^2(\Sigma)}^2.
\end{equation}
We also have
\begin{equation*}
\uu^1-\uu^0= \dt \partial_t \uu(0) + \int_{0}^{t_1} \int_{0}^r \partial_t^2 \uu(s) ds dr,
\end{equation*}
and therefore 
\begin{equation}
 \| \uu^1-\uu^0\|_{L^2(\Sigma)}^2  \le C \dt^2 \| \partial_t \uu(0)\|_{L^2(\Sigma)}^2+C \dt^4 \max_{0 \le  s \le t_1} \| \partial_t^2 \uu(s)\|_{L^2(\Sigma)}^2.
\end{equation}
Then \eqref{eq:g1esti} becomes
\begin{equation}\label{eq:g1esti1}
\begin{aligned}
    &\alpha\dt\bl\frac{g_1^2-2g_1^1}{\dt},\mathfrak{W}^{2}\br\\
    \le& \frac12\nu_s\dt\|\nabla\mathfrak{W}^{2}\|^2_{L^2(\Sigma)}+C\frac{\alpha^2(\dt)^3}{\nu_s}\big(\|\pt^2\uu(0)\|^2_{L^2(\Sigma)}+\dt^2\max_{0 \le  s \le t_2} \| \partial_t^3 \uu(s)\|_{L^2(\Sigma)}^2\big)\\
    &+C\frac{\alpha^2}{\nu_s}\dt\big( \| \partial_t \uu(0)\|_{L^2(\Sigma)}^2+(\dt)^2 \max_{0 \le  s \le t_1} \| \partial_t^2 \uu(s)\|_{L^2(\Sigma)}^2\big).
\end{aligned}
\end{equation}
By using the Mean Value Theorem again, we improve \eqref{eq:g1esti1} further as
\begin{equation}\label{eq:g1esti2}
\begin{aligned}
    &\alpha\dt\bl\frac{g_1^2-2g_1^1}{\dt},\mathfrak{W}^{2}\br\\
    \le& \frac12\nu_s\dt\|\nabla\mathfrak{W}^{2}\|^2_{L^2(\Sigma)}+C\frac{\alpha^2(\dt)^3}{\nu_s}\big(\|\pt^2\uu(0)\|^2_{L^2(\Sigma)}+\dt^2\max_{0 \le  s \le t_2} \| \partial_t^3 \uu(s)\|_{L^2(\Sigma)}^2\big)\\
    &+C\frac{\alpha^2}{\nu_s}\dt\big(\| \partial_t \uu(0)\|_{L^2(\Sigma)}^2+(\dt)^2\|\partial_t^2\uu(0)\|_{L^2(\Sigma)}^2\big)\\
    &+C\frac{\alpha^2}{\nu_s}\dt^5 \max_{0 <\theta<s \le t_1} \| \partial_t^3 \uu(\theta)\|_{L^2(\Sigma)}^2\\
    \le& \frac12\nu_s\dt\|\nabla\mathfrak{W}^{2}\|^2_{L^2(\Sigma)}+C\frac{\alpha^2(\dt)^3}{\nu_s}\big(\|\pt^2\uu(0)\|^2_{L^2(\Sigma)}+\dt^2\max_{0 \le  s \le t_2} \| \partial_t^3 \uu(s)\|_{L^2(\Sigma)}^2\big)\\
    &+C\frac{\alpha^2}{\nu_s}\dt\big( \| \partial_t \uu(0)\|_{L^2(\Sigma)}^2+(\dt)^4\max_{0 <\theta<s \le t_1} \| \partial_t^3 \uu(\theta)\|_{L^2(\Sigma)}^2\big).
\end{aligned}
\end{equation}

The last term in $\dt F^{2}(\mathfrak{W},\mathfrak{U})$ can be estimated similarly as follows. We have
\begin{equation}\label{eq:u2u1esti}
  \begin{aligned}
    &\dt\bl \mathfrak{U}^{2}-\mathfrak{U}^1, \frac{g_2^2-2g_2^1}{\dt} \br\\
    \le&\frac12\nu_f\dt\|\nabla \mathfrak{U}^2\|^2_{L^2(\Sigma)}+ \frac12\nu_f\dt\|\nabla \mathfrak{U}^1\|^2_{L^2(\Sigma)}\\
    &+C\frac{\alpha^2(\dt)^3}{\nu_f}\big(\|\pt^2\lla(0)\|^2_{L^2(\Sigma)}+\dt^2\max_{0 \le  s \le t_2} \| \partial_t^3 \lla(s)\|_{L^2(\Sigma)}^2\big)\\
    &+C\frac{\alpha^2}{\nu_f}\dt\big( \| \partial_t \lla(0)\|_{L^2(\Sigma)}^2+(\dt)^4\max_{0 <\theta<s \le t_1} \| \partial_t^3 \lla(\theta)\|_{L^2(\Sigma)}^2\big).
  \end{aligned}
\end{equation}
The first term on the right-hand side of the inequality in \eqref{eq:u2u1esti} can be kicked back to $S^2(\mathfrak{W},\mathfrak{U},\mathfrak{L})$, the second term can be estimated similarly by \eqref{eq:z1s1} and \eqref{eq:z1ddtextra}.

The last term $\frac{\dt}{\alpha}\bl\frac{g_2^2-2g_2^1}{\dt},\mathfrak{L}^2\br$ can also be estimated similarly. We have
\begin{equation}\label{eq:frakl2esti}
  \begin{aligned}
    &\frac{\dt}{\alpha}\bl\frac{g_2^2-2g_2^1}{\dt},\mathfrak{L}^2\br\\
    \le&\frac{\dt}{4\alpha}\|\mathfrak{L}^2\|_{L^2(\Sigma)}^2+C\frac{(\dt)^3}{\alpha}\big(\|\pt^2\lla(0)\|^2_{L^2(\Sigma)}+\dt^2\max_{0 \le  s \le t_2} \| \partial_t^3 \lla(s)\|_{L^2(\Sigma)}^2\big)\\
    &+C\frac{1}{\alpha}\dt\big( \| \partial_t \lla(0)\|_{L^2(\Sigma)}^2+(\dt)^4\max_{0 <\theta<s \le t_1} \| \partial_t^3 \lla(\theta)\|_{L^2(\Sigma)}^2\big).
  \end{aligned}
\end{equation}
Combining \eqref{eq:z1ddtextra} to \eqref{eq:frakl2esti}, we have the following estimate for \eqref{eq:zpdt2} under the Assumption \ref{assump:initial1}.
\begin{equation}
  \begin{aligned}
    (\dt)^2Z^2(\pdt^2W,\pdt^2U,\pdt^2\Lambda)+S^2(\mathfrak{W},\mathfrak{U},\mathfrak{L})\le C(\dt)^4\mathsf{Y}_2,
  \end{aligned}
\end{equation}
which gives the desired estimate \eqref{eq:initz2}. Here $\mathsf{Y}_2$ is defined as:
\begin{equation}
  \begin{aligned}
    \mathsf{Y}_2=&\max_{0 <\theta<s \le t_1} \| \partial_t^3 \ww(\theta)\|_{L^2(\Omega_s)}^2+\max_{0 <\theta<s \le t_1} \| \partial_t^3 \uu(\theta)\|_{L^2(\Omega_f)}^2\\
    &+\frac{\alpha^2\dt}{\nu_s}\max_{0 <\theta<s \le t_1} \| \partial_t^3 \uu(\theta)\|_{L^2(\Sigma)}^2+ (\frac{\alpha^2}{\nu_f}+\frac{1}{\alpha})\dt\max_{0 <\theta<s \le t_1} \| \partial_t^3 \lla(\theta)\|_{L^2(\Sigma)}^2\\
    &+\max_{0 \le  s \le t_2} \| \partial_t^3 \ww(s)\|_{L^2(\Omega_s)}^2+\max_{0 \le  s \le t_2} \| \partial_t^3 \uu(s)\|_{L^2(\Omega_f)}^2\\
    &+\frac{\alpha^2\dt}{\nu_s}\max_{0 \le  s \le t_2} \| \partial_t^3 \uu(s)\|_{L^2(\Sigma)}^2+(\frac{\alpha^2}{\nu_f}+\frac{1}{\alpha})\dt\max_{0 \le  s \le t_2} \| \partial_t^3 \lla(s)\|_{L^2(\Sigma)}^2\\
    &+\frac{\alpha^2}{\nu_s}+\frac{\alpha^2}{\nu_f}+\frac{1}{\alpha}+1.
  \end{aligned}
\end{equation}
\end{proof}

\section{Sketch of Proof: Corollary \ref{Corrate}}\label{appdix:fcoro}
\begin{proof}[Proof of \eqref{eq:errorW} in Corollary \ref{Corrate}]
  To prove \eqref{eq:errorW}, it is suffice to bound the term $\Xi_0^N(h_1, h_2, \alpha g_1-g_2,\tilde{g}_2)$. Note that since $g_2^{n+1}=\tilde{g}_2^{n+1}$ on $\Sigma$, we have
  \begin{alignat*}{1}
 \Xi_0^N(h_1, h_2, \alpha g_1-g_2,\tilde{g}_2):=  &\dt \sum_{n=0}^{N-1} (\frac{1}{\nu_s}\| h_1^{n+1}\|_{L^2(\Omega_s)}^2+(\frac{1}{\nu_f}+\frac{1}{\alpha})\| h_2^{n+1}\|_{L^2(\Omega_f)}^2) \\
&+  \dt \sum_{n=1}^{N-1} \frac{1}{\nu_f\alpha^2} \| \pdt \tilde{g}_2^{n+1}\|_{L^2(\Omega_f)}^2 +\dt \sum_{n=0}^{N-1} \Big(  \frac{\nu_f}{\alpha^2} \| \nabla \tilde{g}_2^{n+1}\|_{L^2(\Omega_f)}^2 + \frac{1}{\alpha}\| \tilde{g}_2^{n+1} \|_{L^2(\Omega_f)}^2 \Big)  \\
&+ \dt \sum_{n=0}^{N-1} \Big(\frac{1}{\nu_s} \| \alpha g_1\|_{L^2(\Sigma)}^2+\frac{1}{\nu_f}\| \tilde{g}_2^{n+1}\|_{L^2(\Sigma)}^2 \Big) + \frac{1}{\alpha^2}\|\tilde{g}_2^N\|_{L^2(\Omega_f)}^2 \\
&=T_1+T_2+\ldots+T_8.
\end{alignat*}
All the terms in $\Xi_0^N(h_1, h_2, \alpha g_1-g_2,\tilde{g}_2)$ can be easily estimated by \eqref{eq:tdesti} except the $T_3$ and $T_4$. For $T_3$, it follows from \eqref{621} that,
\begin{equation}
\begin{aligned}
    \dt \sum_{n=1}^{N-1} \frac{1}{\nu_f\alpha^2} \| \pdt \tilde{g}_2^{n+1}\|_{L^2(\Omega_f)}^2&=(\dt)^3 \sum_{n=1}^{N-1} \frac{1}{\nu_f\alpha^2} \| \pdt^2 \tilde{\lla}^{n+1}\|_{L^2(\Omega_f)}^2\\
    &\le C\frac{(\dt)^2}{\nu_f\alpha^2}\|\pt^2 \tilde{\lla}\|_{L^2(0,T;L^2(\Omega_f))}^2\\
    &\le C(\dt)^2\frac{\nu_f}{\alpha^2}\|\pt^2 \uu\|_{L^2(0,T;H^1(\Omega_f))}^2.
\end{aligned}
\end{equation}
The term $T_4$ can be bounded as follows,
$$
 \dt\frac{\nu_f}{\alpha^2} \sum_{n=0}^{N-1}\|\nabla \tilde{g}_2^{n+1}\|_{L^2(\Omega_f)}^2 
\le C \dt\frac{(\nu_f)^3}{\alpha^2} \sum_{n=0}^{N-1}  \Big( \|\nabla(\uu^{n+1}-u^n)\|_{L^2(\Omega_f)}^2 + \|D^2 (\uu^{n+1}-\uu^n)\|_{L^2(\Omega_f)}^2 \Big).
$$
Here $D^2 \uu$ denotes the Hessian of $\uu$. Using \eqref{721} we get 
\begin{alignat*}{1}
\dt\frac{\nu_f}{\alpha^2} \sum_{n=0}^{N-1}\|\nabla \tilde{g}_2^{n+1}\|_{L^2(\Omega_f)}^2 
 \le & C  (\dt)^2\frac{(\nu_f)^3}{\alpha^2}  \|\pt \uu\|_{L^2(0,T;H^2(\Omega_f))}^2.
\end{alignat*} 
The estimates above imply \eqref{eq:errorW}.
\end{proof}
Before we prove the estimate \eqref{eq:errordW}, we need the following preliminary results. We use the Bochner norms $\|v\|_{L^2(a,b; X)}=\Big(\int_a^b \|v(\cdot, s)\|_X^2 ds\Big)^{1/2}$ and $\|v\|_{L^\infty(a,b; X)}=\text{ess sup}_{ a \le  s \le b}  \|v(\cdot, s)\|_X$. For a Sobolev space $X$, it is well known that
\begin{subequations}
\begin{alignat}{1}\label{721}
\| v^{n+1}-v^n\|_X^2 \le&  C \dt \int_{t_n}^{t_{n+1}} \|\pt v(\cdot,s)\|_{X}^2 ds, \\
\| \pdt v^{n+1}-\pt v^{n+1}\|_X^2 \le&  C \dt \int_{t_n}^{t_{n+1}} \|\pt^2 v(\cdot,s)\|_{X}^2 ds, \label{eq:tdesti}\\
\int_{a}^{b} \|v(\cdot,s)\|_X^2 ds \leq & (b-a) \|v\|_{L^\infty(a,b;X)}^2.\label{eq:linftyesti}
\end{alignat}
\end{subequations}
The following identities can easily be shown
\begin{equation}\label{eq:secordfd}
 \pdt^2 v^n= \frac{1}{(\dt)^2} \int_{-\dt}^{\dt} (\dt -|s|)\pt^2 v(\cdot, t_n+s) ds,
\end{equation}
and
\begin{equation}\label{eq:cg23}
  \begin{aligned}
 \frac{\pdt^2 v^{n}- \pdt^2 v^{n-1}}{\dt} =& \frac{1}{(\dt)^3} \int_{-\dt}^{\dt} (\dt -|s|)\big(\pt^2 v(\cdot, t_n+s)- \pt^2 v(\cdot, t_{n-1}+s)\big) ds\\
 =& \frac{1}{(\dt)^3} \int_{-\dt}^{\dt} (\dt -|s|)\int_{t_{n-1}+s}^{t_{n}+s} \pt^3 v(\cdot, r) dr ds. 
\end{aligned}
\end{equation}
From these we can show that
\begin{equation}\label{621}
\| \pdt^2 v^n\|_{X}^2 \le  \frac{C}{\dt} \int_{t_{n-1}}^{t_{n+1}} \|\pt^2 v(\cdot,s)\|_{X}^2 ds, 
\end{equation}
and
\begin{alignat}{1}
 \|\frac{\pdt^2 v^{n}- \pdt^2 v^{n-1}}{\dt}\|_X^2 \le \frac{C}{\dt} \int_{t_{n-2}}^{t_{n+1}} \|\pt^3 v(\cdot, r)\|_X^2 dr.    \label{v3d}
\end{alignat}
\begin{proof}[Proof of \eqref{eq:errordW} in Corollary \ref{Corrate}]
It is similar to prove \eqref{eq:errordW}. Indeed, let us define, for $j=1,2$
\begin{equation*}
  G_j^{n+1}=\pdt g_j^{n+1},\quad\tilde{G}_2^{n+1}= \pdt \tilde{g}_2^{n+1}, \quad H_j^{n+1}=\pdt h_j^{n+1}.
\end{equation*}
We then need to bound the term
\begin{alignat*}{1}
\Xi_1^N&(H_1, H_2, \alpha G_1-G_2, \tilde{G}_2)\\
=&  \dt \sum_{n=1}^{N-1} (\frac{1}{\nu_s}\| H_1^{n+1}\|_{L^2(\Omega_s)}^2+(\frac{1}{\nu_f}+\frac{1}{\alpha})\| H_2^{n+1}\|_{L^2(\Omega_f)}^2) \\
&+  \dt \sum_{n=2}^{N-1}\frac{1}{\nu_f\alpha^2} \| \pdt \tilde{G}_2^{n+1}\|_{L^2(\Omega_f)}^2 +\dt \sum_{n=1}^{N-1} \Big( \frac{\nu_f}{\alpha^2} \| \nabla \tilde{G}_2^{n+1}\|_{L^2(\Omega_f)}^2 + \frac{1}{\alpha}\| \tilde{G}_2^{n+1} \|_{L^2(\Omega_f)}^2 \Big)  \\
&+ \dt \sum_{n=1}^{N-1} \Big(\frac{1}{\nu_s} \| \alpha G_1\|_{L^2(\Sigma)}^2+\frac{1}{\nu_f}\| \tilde{G}_2^{n+1}\|_{L^2(\Sigma)}^2 \Big) +\frac{1}{\alpha^2}\|\tilde{G}_2^N\|_{L^2(\Omega_f)}^2 \\
=&R_1+R_2+\ldots+R_8.
\end{alignat*}
For $R_1$, it follows from the definitions of $H_1^{n+1}$ and $h_1^{n+1}$ that,
  \begin{equation}\label{eq:h1esti}
    \begin{aligned}
      \|H_1^{n+1}\|^2_{L^2(\Omega_s)}&=\int_{\Omega_s}\left(\frac{h_1^{n+1}-h_1^n}{\dt}\right)^2\ \!dx\\
      &=\int_{\Omega_s}\left(\frac{\pt \ww^{n+1} - \pdt \ww^{n+1}-\pt \ww^{n} + \pdt \ww^{n}}{\dt}\right)^2\ \!dx\\
      &=\int_{\Omega_s}\left(\pdt (\partial_t\ww)^{n+1}-\pdt^2\ww^{n}\right)^2\ \!dx\\
      &=\int_{\Omega_s}\left(\pdt (\partial_t\ww)^{n+1}-\pt^2\ww^n+\pt^2\ww^n-\pdt^2\ww^{n+1}\right)^2\ \!dx\\
      &\le C\int_{\Omega_s}\left(\frac{1}{\dt}\int_{t_n}^{t_{n+1}}(\pt^2\ww(t_n)-\pt^2\ww(s))\ \!ds\right)^2\ \!dx\\
      &\quad +C \int_{\Omega_s}\left(\frac{1}{(\dt)^2}\int_{t_{n-1}}^{t_{n+1}}\big(\dt-|s-t_n|\big)\big(\pt^2\ww(s)-\pt^2\ww(t_n)\big)\ \!ds\right)^2\ \!dx\\
      &\le C\dt\|\pt^3\ww\|^2_{L^2((t_{n-1},t_{n+1}),L^2(\Omega_s))}.
    \end{aligned}
  \end{equation}
  Therefore, we obtain
  \begin{equation}\label{eq:dth1}
    \dt\sum_{n=1}^{N-1}\frac{1}{\nu_s}\|H_1^{n+1}\|^2_{L^2(\Omega_s)}\le C(\dt)^2\frac{1}{\nu_s}\|\pt^3\ww\|^2_{L^2((0,T),L^2(\Omega_s))}.
  \end{equation}
  The estimate of $R_2$ is similar to that of $R_1$. The term $R_3$ can be estimated as follow by \eqref{v3d},
  \begin{equation}
    \begin{aligned}
      \dt \sum_{n=2}^{N-1} \frac{1}{\nu_f\alpha^2} \| \pdt \tilde{G}_2^{n+1}\|_{L^2(\Omega_f)}^2&=(\dt)^3\sum_{n=2}^{N-1} \frac{1}{\nu_f\alpha^2}\| \frac{\pdt^2 \tilde{\lla}^{n}- \pdt^2 \tilde{\lla}^{n-1}}{\dt}\|_{L^2(\Omega_f)}^2\\
      &\le C\frac{(\dt)^2}{\nu_f\alpha^2}\|\pt^3 \tilde{\lla}\|_{L^2(0,T;L^2(\Omega_f))}^2\\
    &\le C(\dt)^2\frac{\nu_f}{\alpha^2}\|\pt^3 \uu\|_{L^2(0,T;H^1(\Omega_f))}^2.
    \end{aligned}
  \end{equation}
For $R_4$, it follows from the definition of $\tilde{\lla}$ and the fact $\|\nabla\phi\|_{L^2(\Omega_f)}\le C$ that,
  \begin{equation}\label{eq:ng2}
    \begin{aligned}
      \|\nabla\tilde{G}_2^{n+1}\|_{L^2(\Omega_f)}^2&=\int_{\Omega_f}\left(\frac{\nabla\tilde{\lla}^{n+1}-2\nabla\tilde{\lla}^{n}+\nabla\tilde{\lla}^{n-1}}{\dt}\right)^2\ \!dx\\
      &=\int_{\Omega_f}\left(\frac{1}{\dt}\int_{t_{n-1}}^{t_{n+1}}\big(\dt-|s-t_n|\big)\pt^2\nabla\tilde{\lla}(s)\ \!ds\right)^2\ \!dx\\
      &\le C\int_{\Omega_f}\left(\int_{t_{n-1}}^{t_{n+1}}|\pt^2\nabla\tilde{\lla}(s)|\ \!ds\right)^2\ \!dx\\
      &\le C\dt\int_{\Omega_f}\int_{t_{n-1}}^{t_{n+1}}(\pt^2\nabla\tilde{\lla}(s))^2\ \!ds\ \!dx\\
      &\le C \nu_f^2\dt \|\pt^2\uu\|_{L^2(t_{n-1},t_{n+1};H^2(\Omega_f))}^2,
    \end{aligned}
  \end{equation}
  and hence,
  \begin{equation}\label{eq:R4}
    \dt \sum_{n=1}^{N-1}\frac{\nu_f}{\alpha^2}\| \nabla \tilde{G}_2^{n+1}\|_{L^2(\Omega_f)}^2\le C(\dt)^2\frac{(\nu_f^3)}{\alpha^2}\|\pt^2\uu\|_{L^2(0,T;H^2(\Omega_f))}^2.
  \end{equation}
  The remaining terms $R_5$ to $R_8$ can be estimated similarly, we give the estimate of $R_6$ here:
  \begin{equation}\label{eq:g1s}
    \begin{aligned}
      \|G_1^{n+1}\|^2_{L^2(\Sigma)}&=\int_{\Sigma}\left(\frac{g_1^{n+1}-g_1^n}{\dt}\right)^2\ \!d\sigma\\
      &=\int_{\Sigma}\left(\frac{\uu^{n+1}-2\uu^{n}+\uu^{n-1}}{\dt}\right)^2\ \!d\sigma\\
      &=\int_{\Sigma}\left(\frac{1}{\dt}\int_{t_{n-1}}^{t_{n+1}}\big(\dt-|s-t_n|\big)\pt^2\uu(s)\ \!ds\right)^2\ \!d\sigma\\
      &\le C\dt\|\pt^2\uu\|^2_{L^2((t_{n-1},t_{n+1}),L^2(\Sigma))},
    \end{aligned}
  \end{equation}
  and therefore, we obtain
  \begin{equation}\label{eq:R6}
    \dt \sum_{n=1}^{N-1} \frac{1}{\nu_s} \| \alpha G_1\|_{L^2(\Sigma)}^2\le C(\dt)^2\frac{\alpha^2}{\nu_s}\|\pt^2\uu\|^2_{L^2((0,T),L^2(\Sigma))}.
  \end{equation}
 The last term $\|\pdt \tilde{g}_2^{2}\|_{L^2(\Omega_f)}^2$ can be estimated similarly as \eqref{eq:ng2} by using \eqref{eq:linftyesti} as follows. We have
  \begin{equation}\label{eq:pdtg2}
  \begin{aligned}
        \|\pdt \tilde{g}_2^{2}\|_{L^2(\Omega_f)}^2&\le C\nu_f^2\dt\|\pt^2 \uu\|_{L^2(t_0,t_2;H^1(\Omega_f))}^2\\
        &\le C\nu_f^2(\dt)^2\|\pt^2 \uu\|_{L^\infty(0,T;H^1(\Omega_f))}^2.
  \end{aligned}
  \end{equation}
\end{proof}

In order to prove \eqref{eq:errorddW}, we define the following quantities for $j=1,2$,
\begin{equation}\label{eq:curlygh}
  \cG_j^{n+1}=\pdt G_j^{n+1},\quad \cH_j^{n+1}=\pdt H_j^{n+1}, \quad \tilde{\cG}_2^{n+1}= \pdt \tilde{G}_2^{n+1}.
\end{equation}
Note that we also have the following
\begin{equation}
  \begin{aligned}\label{eq:cg21dd}
\tilde{\cG}_2^{n+1}=\frac{\tilde{g}_2^{n+1}-2\tilde{g}_2^{n}+\tilde{g}_2^{n-1}}{(\dt)^2}=\frac{\tilde{\lla}^{n+1}-3\tilde{\lla}^{n}+3\tilde{\lla}^{n-1}-\tilde{\lla}^{n-2}}{(\dt)^2}.
\end{aligned}
\end{equation}
Denote
\begin{equation}\label{eq:dfq3}
  \pdt^3v^{n+1}=\frac{v^{n+1}-3v^n+3v^{n-1}-v^{n-2}}{(\dt)^3},
\end{equation}
thus we see that 
\begin{equation}
  \begin{aligned}\label{eq:cg22dd}
\frac{\tilde{\cG}_2^{n+1}-\tilde{\cG}_2^{n}}{(\dt)^2}=\frac{\pdt^3\tilde{\lla}^{n+1}-\pdt^3\tilde{\lla}^{n}}{\dt}.
\end{aligned}
\end{equation}
Moreover, we see that 
\begin{equation*}
\begin{aligned}\label{eq:cg23dd}
\frac{\pdt^3 v^{n}- \pdt^3 v^{n-1}}{\dt} =& \frac{1}{(\dt)^4} \Bigg(\int_{-\dt}^{\dt} (\dt -|s|)\big(\pt^2 v(\cdot, t_n+s)- \pt^2 v(\cdot, t_{n-1}+s)\big) ds\\
 &-\int_{-\dt}^{\dt} (\dt -|s|)\big(\pt^2 v(\cdot, t_{n-1}+s)- \pt^2 v(\cdot, t_{n-2}+s)\big) ds\Bigg)\\
 =&\frac{1}{(\dt)^4} \Bigg(\int_{-\dt}^{\dt} (\dt -|s|)\big(\pt^2 v(\cdot, t_n+s)- 2\pt^2 v(\cdot, t_{n-1}+s)+\pt^2 v(\cdot, t_{n-2}+s)\big) ds\Bigg)\\
 =& \frac{1}{(\dt)^4} \int_{-\dt}^{\dt} (\dt -|s|)\int_{t_{n-2}+s}^{t_{n}+s} (\dt-|r-t_{n-1}|)\pt^4 v(\cdot, r) dr ds.
\end{aligned}  
\end{equation*}

Therefore, we obtain the following estimate
\begin{alignat}{1}
 \|\frac{\pdt^3 v^{n}- \pdt^3 v^{n-1}}{\dt}\|_X^2 \le \frac{C}{\dt} \int_{t_{n-3}}^{t_{n+1}} \|\pt^4 v(\cdot, r)\|_X^2 dr.    \label{v3dd}
\end{alignat}
\begin{proof}[Proof of \eqref{eq:errorddW} in Corollary \ref{Corrate}]
  We now bound the term $\Xi^N(\pdt^2 h_1, \pdt^2 h_2, \pdt^2 (\alpha g_1-g_2), \pdt^2 \tilde{g}_2)$ which is $\Xi^N(\cH_1, \cH_2, (\alpha \cG_1-\cG_2),  \tilde{\cG}_2)$ according to \eqref{eq:curlygh}. The techniques are similar to those mentioned above. We present the key estimates involving $\cH_1$ and $\tilde{\cG}_2$ first.
  It follows from the definition of $\cH_1^{n+1}$ that,
  \begin{equation}\label{eq:h1estid}
    \begin{aligned}
      \|\cH_1^{n+1}\|^2_{L^2(\Omega_s)}&=\int_{\Omega_s}\left(\frac{h_1^{n+1}-2h_1^n+h_1^{n-1}}{(\dt)^2}\right)^2\ \!dx\\
      &=\int_{\Omega_s}\left(\frac{\pdt (\pt\ww)^{n+1}-\pdt^2\ww^{n+1}-(\pdt (\pt\ww)^{n}-\pdt^2\ww^{n})}{\dt}\right)^2\ \!dx\\
      &=\int_{\Omega_s}\left(\pdt^2(\pt\ww)^{n+1}-\pdt^3\ww^{n+1}\right)^2\ \!dx\\
      &=\int_{\Omega_s}\left(\pdt^2(\pt\ww)^{n+1}-\pt^3\ww^n+\pt^3\ww^n-\pdt^3\ww^{n+1}\right)^2\ \!dx\\
      &\le C\int_{\Omega_s}\left(\frac{1}{(\dt)^2}\int_{t_{n-1}}^{t_{n+1}}\big(\dt-|s-t_n|\big)\big(\pt^3\ww(s)-\pt^3\ww(t_n)\big)\ \!ds\right)^2\ \!dx\\
      &\quad+ C\int_{\Omega_s}\left(\frac{1}{(\dt)^3}\int_{-\dt}^{\dt}(\dt-|s|)\int_{t_{n-1}+s}^{t_n+s}\pt^3\ww(t_n)-\pt^3\ww(\cdot,r)\ \!ds\right)^2\ \!dx\\
      &\le C\dt\|\pt^4\ww\|^2_{L^2((t_{n-2},t_{n+1}),L^2(\Omega_s))}.
    \end{aligned}
  \end{equation}
  We also have, according to the definition of $\tilde{\lla}$ and the fact $\|\nabla\phi\|_{L^2(\Omega_f)}\le C$ that,
  \begin{equation}\label{eq:frakg2}
    \begin{aligned}
      \|\nabla\tilde{\cG}_2^{n+1}\|_{L^2(\Omega_f)}^2&=\int_{\Omega_f}\left(\frac{\frac{\nabla\tilde{\lla}^{n+1}-2\nabla\tilde{\lla}^{n}+\nabla\tilde{\lla}^{n-1}}{\dt}-\frac{\nabla\tilde{\lla}^{n}-2\nabla\tilde{\lla}^{n-1}+\nabla\tilde{\lla}^{n-2}}{\dt}}{\dt}\right)^2\ \!dx\\
      &=\int_{\Omega_f}\left(\frac{1}{(\dt)^2}\int_{-\dt}^{\dt} (\dt -|s|)\big(\pt^2 \nabla\tilde{\lla}(\cdot, t_n+s)- \pt^2 \nabla\tilde{\lla}(\cdot, t_{n-1}+s)\big) ds\right)^2\ \!dx\\
      &\le C\int_{\Omega_f}\left(\frac{1}{\dt}\int_{-\dt}^{\dt}|\int_{t_{n-1}+s}^{t_{n}+s} \pt^3 \nabla\tilde{\lla}(\cdot, r) dr |\ \!ds\right)^2\ \!dx\\
      &\le C\dt\int_{\Omega_f}\int_{t_{n-2}}^{t_{n+1}}(\pt^3\nabla\tilde{\lla}(\cdot,r))^2\ \!dr\ \!dx\\
      &\le C \nu_f^2\dt \|\pt^3\uu\|_{L^2(t_{n-2},t_{n+1};H^2(\Omega_f))}^2.
    \end{aligned}
  \end{equation}
  It follows from \eqref{v3dd} that,
  \begin{equation}
    \begin{aligned}
      \dt \sum_{n=3}^{N-1} \frac{1}{\nu_f\alpha^2}\| \pdt \tilde{\cG_2}^{n+1}\|_{L^2(\Omega_f)}^2&=(\dt)^3 \sum_{n=3}^{N-1} \frac{1}{\nu_f\alpha^2} \|\frac{\pdt^3 \tilde{\lla}^{n}- \pdt^3 \tilde{\lla}^{n-1}}{\dt}\|_{L^2(\Omega_f)}^2\\
      &\le C\frac{(\dt)^2}{\nu_f\alpha^2}\|\pt^4 \tilde{\lla}\|_{L^2(0,T;L^2(\Omega_f))}^2\\
    &\le C(\dt)^2\frac{\nu_f}{\alpha^2}\|\pt^4 \uu\|_{L^2(0,T;H^1(\Omega_f))}^2.
    \end{aligned}
  \end{equation}

  The last term $\|\pdt^2 \tilde{g}_2^{3}\|_{L^2(\Omega_f)}^2$ can be estimated similarly as \eqref{eq:frakg2} by using \eqref{eq:linftyesti} as follows. We have
  \begin{equation}\label{eq:pdt2g2}
  \begin{aligned}
        \|\pdt^2 \tilde{g}_2^{3}\|_{L^2(\Omega_f)}^2&\le C\nu_f^2\dt\|\pt^3 \uu\|_{L^2(t_0,t_3;H^1(\Omega_f))}^2\\
        &\le C\nu_f^2(\dt)^2\|\pt^3 \uu\|_{L^\infty(0,T;H^1(\Omega_f))}^2.
  \end{aligned}
  \end{equation}

  Then we follow the same idea as before and obtain the following bound:
  \begin{equation}
    \Xi_2^N(\cH_1, \cH_2, (\alpha \cG_1-\cG_2),  \tilde{\cG}_2)\le C(\dt)^2\mathfrak{Y},
  \end{equation}
  where $\mathfrak{Y}$ is defined in \eqref{eq:frakY}.
  \end{proof}

\section{Sketch of Proof: Corollary \ref{H2errordtexplcit}}\label{appdix:fcoro1}

\begin{proof}
According to Corollary \ref{H2errordt}, we need to bound the terms on the right-hand side of the inequality \eqref{eq:h2duesti}.
 The analysis to bound the first term  $\Xi_1^N(\pdt \p_x  h_1, \pdt \p_x h_2, \pdt \p_x (\alpha g_1-g_2),  \pdt \p_x \tilde{g}_2)$ is almost identical to that of the term $\Xi_1^N(H_1, H_2, \alpha G_1-G_2, \tilde{G}_2)$ except the additional partial derivative which does not affect the techniques. Therefore, we obtain the bound
  \begin{equation}
    \Xi_1^N(\pdt \p_x  h_1, \pdt \p_x h_2, \pdt \p_x (\alpha g_1-g_2),  \pdt \p_x \tilde{g}_2)\le C(\dt)^2\mathbb{Y},
  \end{equation}
  where $\mathbb{Y}$ is defined in \eqref{eq:bbY}.
  The second term in \eqref{eq:h2duesti} is bounded in Corollary \ref{Corrate}. The third term, the sixth term and the last term can be estimated similarly by \eqref{eq:pdtg2}, \eqref{eq:pdt2g2} and \eqref{eq:dth1}. The term $Z^{2}(\pdt^2 W, \pdt^2 U, \pdt^2 \Lla)$ is estimated in \eqref{eq:initz2}. The term $Z^{1}(\pdt \p_xW, \pdt \p_xU, \pdt \p_x\Lla)$ can be estimated similarly to \eqref{eq:initz1} under Assumption \ref{assump:initial2}. This finishes the proof.
\end{proof}

\section*{Declarations} 

\noindent{\bf Ethical approval}: Not Applicable.
\\

\noindent{\bf Availability of supporting data}: No additional data are available.
\\

\noindent{\bf Competing interests}: The authors declare no competing interests.
\\

\noindent{\bf Funding}:  This material is based upon work supported by the National Science Foundation under Grant No. DMS-1929284 while the last author was in residence at the Institute for Computational and Experimental Research in Mathematics in Providence, RI, during the "Numerical PDEs: Analysis, Algorithms, and Data Challenges" program. EB was supported by EPSRC grants EP/J002313/2 and EP/W007460/1. EB and MF were supported by the IMFIBIO Inria-UCL associated team.  
\\

\noindent{\bf Authors' contributions}: All the authors contributed significantly.
\\

\noindent{\bf Acknowledgement}: Not Applicable.

\bibliographystyle{abbrv}
\bibliography{referencesWP}

\end{document}